\documentclass[oneside, reqno] {amsart}

\usepackage{xypic}
\input xy
\xyoption{all}
\usepackage{epsfig}
\usepackage{amsthm}
\usepackage{amssymb}
\usepackage{amsmath}
\usepackage{amscd}
\usepackage{color}
\usepackage[T1]{fontenc}
\usepackage{upgreek}
\usepackage[font=scriptsize]{caption}
\usepackage{wrapfig}
\usepackage{graphicx}
\usepackage{subfigure}

\newcommand{\bg}{\begin{equation}}
\newcommand{\ed}{\end{equation}}
\newcommand{\bga}{\begin{eqnarray}}
\newcommand{\eda}{\end{eqnarray}}

\def\cbdu{\par{\raggedleft$\Box$\par}}

\newtheorem {Theorem}  {Theorem}

\numberwithin{Theorem}{section}

\newtheorem {Lemma}[Theorem]  {Lemma}

\theoremstyle{definition}

\theoremstyle{remark}

\expandafter\chardef\csname pre amssym.def
at\endcsname=\the\catcode`\@ \catcode`\@=11
\def\undefine#1{\let#1\undefined}
\def\newsymbol#1#2#3#4#5{\let\next@\relax
 \ifnum#2=\@ne\let\next@\msafam@\else
 \ifnum#2=\tw@\let\next@\msbfam@\fi\fi
 \mathchardef#1="#3\next@#4#5}
\def\mathhexbox@#1#2#3{\relax
 \ifmmode\mathpalette{}{\m@th\mathchar"#1#2#3}%
 \else\leavevmode\hbox{$\m@th\mathchar"#1#2#3$}\fi}
\def\hexnumber@#1{\ifcase#1 0\or 1\or 2\or 3\or 4\or 5\or 6\or 7\or 8\or
 9\or A\or B\or C\or D\or E\or F\fi}

\font\teneufm=eufm10 \font\seveneufm=eufm7 \font\fiveeufm=eufm5
\newfam\eufmfam
\textfont\eufmfam=\teneufm \scriptfont\eufmfam=\seveneufm
\scriptscriptfont\eufmfam=\fiveeufm

\catcode`\@=\csname pre amssym.def at\endcsname

\newcounter{remark}
\newcommand{\R}{\mathbf{R}}

\renewcommand{\div}{\mbox{div}}

\def  \R   {{\mathbb R}}

\def  \T   {{\mathbb T}}

\def  \12  {{\frac{1}{2}}}

\def\build#1_#2^#3{\mathrel{\mathop{\kern 0pt#1}\limits_{#2}^{#3}}}

\numberwithin{equation}{section}

\begin{document}
%\currannalsline{0}{2006}

\title[Determining wavenumbers for hall and electron MHD]{Determining wavenumbers for Hall and electron magnetohydrodynamics turbulence}

%\author{hello}

\author [Hassan]{Hassan Babaei}

\address{Department of Mathematics, Statistics and Computer Science, University of Illinois at Chicago, Chicago, IL 60607, USA}
\email{hbabae2@uic.edu} 

\author [Mimi]{Mimi Dai}

\address{Department of Mathematics, Statistics and Computer Science, University of Illinois at Chicago, Chicago, IL 60607, USA}
\email{mdai@uic.edu}

\thanks{H. Babaei is partially supported by the UIC’s Victor Twersky Memorial Scholarship. M.Dai is partially supported by the NSF grant DMS–2308208 and Simons Foundation.}

\begin{abstract}
In turbulent flows, the Kolmogorov wavenumber characterizes the smallest scales at which viscous effects dominate. A mathematical analogue of this notion first introduced by Foias and Prodi \cite{FP}, a determining wavenumber--quantifies the minimal set of modes that uniquely determine the long-time behavior of solutions. Extending this framework from the Navier--Stokes equations to magnetized plasma models, we focus on the Hall-MHD and Electron-MHD turbulence in sub-ion and dissipation ranges.  

We prove existence of time-dependent determining wavenumbers for weak solutions of the Hall- and electron-MHD, improving upon previous results that were not optimal and lacked any comparison with phenomenological dissipation scales. Under explicit scale-localized intermittency assumptions, we show that their time averages are bounded above by Kolmogorov-like dissipation wavenumbers predicted by phenomenological studies of plasma turbulence. For strong electron-MHD solutions, we also establish a uniform bound on the magnetic determining wavenumber from Besov regularity.

\bigskip

KEY WORDS: magnetohydrodynamics; Hall effect; dissipation wavenumber; turbulence theory.  

%\hspace{0.02cm}CLASSIFICATION CODE: 35Q35, 76B03, 76D09, 76E25, 76W05.
\end{abstract}

\maketitle

\section{Introduction}

%\medskip

%\subsection{Overview}

\subsection{Hall-MHD and Electron-MHD Models}

The Hall magnetohydrodynamic (Hall-MHD) equations arise as an extension of the
classical magnetofluid description when additional two–fluid effects become
important. In particular, at spatial scales comparable to the \emph{ion inertial
length} $d_i$, the motions of electrons and ions decouple and the standard Ohm’s
law must be modified to incorporate the \emph{Hall current}. This leads to the
incompressible Hall-MHD system
\begin{equation}
\begin{split}\label{hall-mhd}
&\partial_t u + u\cdot\nabla u + b\cdot\nabla b +\nabla p
= \nu \Delta u , \\
&\partial_t b + u\cdot\nabla b - b\cdot\nabla u\,
 +\, d_i \,\nabla \times \big((\nabla \times b)\times b\big)
= \mu \Delta b, \\
&\nabla \cdot u = 0, \qquad \nabla\cdot b =0 .
\end{split}
\end{equation}

Here $u(x,t)$ denotes the velocity field, $b(x,t)$ the magnetic field, $p$ the
pressure, and $\nu,\mu>0$ represent the viscosity and magnetic diffusivity.
The parameter $d_i$ characterizes the ion inertial scale at which Hall effects
become significant.

When $d_i=0$, the Hall term disappears and the system reduces to the classical
magnetohydrodynamic equations. In contrast, when $d_i\neq0$, additional
dispersive and nonlinear effects appear due to the coupling between the
magnetic field and current density. These effects are known to play a central
role in several plasma processes such as magnetic reconnection and small-scale
magnetic turbulence observed in laboratory plasmas, the Earth's magnetosphere,
and astrophysical environments.

A related model arises in the opposite regime where ions form a stationary
neutralizing background and the magnetic field evolution is governed entirely
by the electron flow. In this limit one obtains the
\emph{Electron-MHD (EMHD)} equation
\begin{equation}
\begin{split}
&\partial_t b + d_i\,\nabla\times\big((\nabla\times b)\times b\big)
= \mu \Delta b,\\
& \nabla\cdot b =0 .
\end{split}
\end{equation}

This model describes magnetic dynamics at scales smaller than the ion inertial
length and is dominated by dispersive whistler-wave interactions. EMHD
turbulence plays an important role in modeling the sub-ion-scale dynamics of
collisionless plasmas such as the solar wind and magnetospheric plasmas.

Phenomenological studies of EMHD turbulence predict a magnetic energy cascade
governed by nonlinear interactions of whistler modes. Dimensional arguments
suggest that the magnetic energy spectrum obeys the scaling law \(-10/3\), which differs from the classical Kolmogorov spectrum observed in
hydrodynamic turbulence. This spectral law has been supported by numerical simulations and observational evidence in plasma turbulence. 
%environments, and itcorresponds to a cascade process that transfers magnetic energy toward small scales until dissipation becomes dominant.

The Hall-MHD system exhibits a richer phenomenology due to the simultaneous
presence of velocity and magnetic fields and the Hall current interaction.
Plasma turbulence in Hall-MHD is typically characterized by three distinct
regimes: an ion-inertial range at large scales, a sub-ion range where Hall
effects dominate, and a final dissipation range at the smallest scales.
Each of these regimes is associated with different scaling behaviors for
the energy spectrum. In particular, within the sub-ion regime the dynamics
resemble those of EMHD turbulence and similar spectral scalings emerge.
These phenomenological predictions indicate the presence of characteristic
dissipation wavenumbers that generalize the classical Kolmogorov scale to
magnetized plasma systems Ref.~\cite{4}.

\subsection{Determining Wavenumbers and Degrees of Freedom}

Kolmogorov’s theory of turbulence predicts that in three-dimensional flows
most of the energy is contained in Fourier modes below a critical wavenumber,
known as the \emph{Kolmogorov dissipation wavenumber}, beyond which viscous
effects dominate. This idea suggests that turbulent flows possess an
effective number of dynamically relevant degrees of freedom determined by
the dissipation scale.

A rigorous mathematical analogue of this concept arises in the theory of
\emph{determining modes}. The notion of determining modes was introduced by
Foias and Prodi for the two-dimensional Navier–Stokes equations and later
developed extensively in the study of dissipative dynamical systems.
Roughly speaking, determining modes represent finitely many Fourier modes
whose knowledge uniquely determines the asymptotic behavior of solutions
as time tends to infinity. Consequently, they provide a mathematical
quantification of the finite dimensionality of turbulent dynamics.

In recent work \cite{1,2}, Cheskidov, Dai, and collaborators established the existence of a
\emph{determining wavenumber} for the 3D Navier–Stokes
equations. Their framework introduces a generalized Kolmogorov dissipation
wavenumber \( \kappa_d :=
\left(
\frac{\varepsilon}{\nu^3}
\right)^{\frac{1}{d+1}},
\)
where $\varepsilon = \nu \langle \|\nabla u\|_2^2 \rangle$ denotes the
average energy dissipation rate and $d\in[0,3]$ represents the
\emph{intermittency dimension} of the flow. The case $d=3$ corresponds to
the classical Kolmogorov scaling without intermittency, while smaller
values of $d$ capture deviations arising from intermittent turbulent
structures.

In this framework the determining wavenumber $\Lambda_u(t)$ separates dynamically active large scales from smaller scales that are slaved to
them. The time-averaged determining wavenumber is controlled by the generalized
Kolmogorov dissipation number, showing that the determining-wavenumber
framework aligns quantitatively with turbulence phenomenology and provides a
rigorous description of the effective degrees of freedom in turbulent
flows.

\subsection{Motivation and Contribution of the Present Work}

The natural question then arises whether a similar correspondence between
determining wavenumbers and phenomenological dissipation scales can be
established for magnetized plasma systems such as the Hall-MHD and EMHD
equations.

Phenomenological and numerical studies of plasma turbulence suggest that Kolmogorov-like dissipation wavenumbers also arise in these systems. In the context of EMHD turbulence it has been conjectured that the dissipation scale is given by
\[
\kappa^{\delta_b}_e :=
\left(
\frac{\varepsilon_b}{\mu^3}
\right)^{\frac{1}{\delta_b-1}},
\]
where, in the dyadic normalization used later in Section~5,
\[
\varepsilon_b:=\mu \lambda_0^{\delta_b}\left\langle
\sum_{q\ge -1}\lambda_q^2\|b_q\|_2^2
\right\rangle,
\]
and $\delta_b\in[0,3]$ represents the intermittency dimension of the
magnetic field. The case $\delta_b=3$ corresponds to the Kolmogorov-like
scaling observed in numerical studies of plasma turbulence.

In this paper we establish a rigorous mathematical framework connecting
these phenomenological dissipation scales with determining wavenumbers
for both EMHD and Hall-MHD systems. We introduce new definitions of the
determining wavenumbers $\Lambda_{u,r}$ and $\Lambda_{b,r}$ associated
with the velocity and magnetic fields, inspired by the refined
construction developed for the Navier–Stokes equations in
Refs.~\cite{1,2}. This choice is natural since the velocity equation in
the Hall-MHD system retains the same scaling structure as the
Navier–Stokes equation.

For the EMHD system we prove the existence of an optimal magnetic
determining wavenumber $\Lambda_{b,r}$ and, under an explicit scale-localized
intermittency assumption, show that its time average satisfies the upper bound
\[
\langle \Lambda_{b,r} \rangle \lesssim \lambda_0+\kappa_e^{\delta_b},
\]
which agrees with the dissipation scale predicted by EMHD turbulence
phenomenology.

The Hall-MHD system presents a more complex scenario due to the richer
nonlinear coupling between velocity and magnetic fields. Plasma
turbulence in this system involves multiple scaling regimes including
the ion-inertial, sub-ion, and dissipation ranges. Our analysis focuses
on the sub-ion and dissipation regimes where the scaling properties
resemble those of EMHD turbulence. We establish the existence of
determining wavenumbers for Hall-MHD and, again under scale-localized
intermittency assumptions, prove the upper bounds
\[
\langle \Lambda_{u,r} \rangle \lesssim \lambda_0+\kappa_u^{\delta_u},
\qquad
\langle \Lambda_{b,r} \rangle \lesssim \lambda_0+\kappa_e^{\delta_b},
\]
where \(\kappa_u^{\delta_u}\) is the Navier--Stokes-type dissipation number
associated with the velocity intermittency dimension \(\delta_u\).

These results provide a rigorous mathematical counterpart to the
phenomenological description of plasma turbulence and extend the
determining modes theory beyond the classical hydrodynamic setting.

\subsection{Outline of the Paper}

The remainder of this paper is organized as follows. In Section~2 we
review the Hall-MHD and Electron-MHD models and introduce the functional
framework used throughout the paper. In Section~3 we define the
determining wavenumbers for these systems following the Littlewood–Paley
framework developed for the Navier–Stokes equations. In Section~4 we
derive estimates for these quantities and establish their boundedness
and scaling properties. In Section~5 we compare the averaged determining
wavenumbers with phenomenological dissipation wavenumbers predicted by
Hall-MHD and EMHD turbulence theories. Finally, in Section \ref{sec-uniform-bound}, we show that a criterion guarantees that the determining wavenumber stays uniformly bounded.
%we discuss the implications of these results and outline possible extensions to other magnetized plasma models. 

\section{Preliminaries}
\label{sec:pre}

\subsection{Notation}
\label{sec:notation}

Throughout the paper, the notation $A \lesssim B$ indicates an estimate of the form $A \leq C B$ for some positive constant $C$ independent of the relevant parameters. 

For a tempered distribution $u$, we denote its Fourier transform by $\mathcal{F}u=\hat{u}$ and the inverse Fourier transform by $\mathcal{F}^{-1}u=\check{u}$. 

For convenience, the $L^p$-norm $\|\cdot\|_{L^p}$ will occasionally be abbreviated as $\|\cdot\|_p$. The symbol $H^s$ denotes the standard $L^2$-based Sobolev space of order $s$.

\subsection{Well-posedness results for system (\ref{hall-mhd})}

Before turning to determining wavenumbers, we record the solution classes for
the Hall-MHD system that are used implicitly in the later analysis. We only
need a brief summary of the standard well-posedness theory on the
three-dimensional torus.

At the level of finite energy, global Leray--Hopf type weak solutions are
available on $\T^3$; see \cite{ADFL}. This is the Hall-MHD counterpart of the
classical global weak existence theory for the Navier--Stokes equations.

\begin{Theorem}[Leray--Hopf type solutions]
Let the initial data satisfy
\[
u_0,b_0\in L^2(\T^3),
\qquad
\nabla\cdot u_0=\nabla\cdot b_0=0.
\]
Then system \eqref{hall-mhd} admits a global weak solution \((u,b)\) with
\[
(u,b) \in L^\infty \big(0,T; (L^2(\T^3))^2\big)
\cap L^2\big(0,T; (H^1(\T^3))^2\big).
\]
In addition, the energy inequality
\[
\frac{1}{2}\frac{\mathrm{d}}{\mathrm{d}t}
\big(\|u\|_{L^2}^2+\|b\|_{L^2}^2\big)
+\nu \|\nabla u\|_{L^2}^2
+\mu \|\nabla b\|_{L^2}^2\leq 0.
\]
\end{Theorem}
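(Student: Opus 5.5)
The plan is a Galerkin approximation with uniform energy bounds, followed by an Aubin--Lions compactness argument, as in \cite{ADFL}. The one genuinely new difficulty compared with Navier--Stokes is passing to the limit in the quadratic Hall term.

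\emph{Step 1: approximate system.} Let $P_n$ be the projection onto Fourier modes $|k|\le n$ on $\T^3$, composed with the Leray projector. The Galerkin system is
\[
\partial_t u_n + P_n(u_n\cdot\nabla u_n - b_n\cdot\nabla b_n)=\nu\Delta u_n,
\]
\[
\partial_t b_n + P_n\big(u_n\cdot\nabla b_n - b_n\cdot\nabla u_n + d_i\nabla\times((\nabla\times b_n)\times b_n)\big)=\mu\Delta b_n,
\]
with data $P_nu_0$, $P_nb_0$. This is a finite-dimensional ODE with locally Lipschitz right-hand side, so it has local solutions.

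\emph{Step 2: energy identity.} Test the $u_n$-equation with $u_n$ and the $b_n$-equation with $b_n$.
\begin{itemize}
\item The transport terms vanish by incompressibility.
\item The coupling terms $\int (b_n\cdot\nabla b_n)\cdot u_n$ and $\int (b_n\cdot\nabla u_n)\cdot b_n$ cancel after integration by parts.
\item The Hall term vanishes because $\int \nabla\times(J\times b)\cdot b=\int (J\times b)\cdot J=0$ with $J=\nabla\times b$.
\end{itemize}
This gives the energy equality
\[
\tfrac12\|u_n(t)\|_2^2+\tfrac12\|b_n(t)\|_2^2+\int_0^t\big(\nu\|\nabla u_n\|_2^2+\mu\|\nabla b_n\|_2^2\big)\le \tfrac12\|u_0\|_2^2+\tfrac12\|b_0\|_2^2 .
\]
Hence the solutions are global, and they are bounded uniformly in $n$ in $L^\infty L^2\cap L^2H^1$.

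\emph{Step 3: time-derivative bounds.} By Gagliardo--Nirenberg,
\[
\|u\otimes u\|_{L^{4/3}L^{3/2}}\lesssim \|u\|_{L^\infty L^2}^{1/2}\|u\|_{L^2H^1}^{3/2},
\]
so the MHD nonlinearities are bounded in $L^{4/3}H^{-1}$. The Hall term is $\nabla\times\nabla\cdot(b\otimes b-\tfrac12|b|^2 I)$ up to a sign convention. It is therefore a second derivative of a quantity bounded in $L^{4/3}L^{3/2}\subset L^{4/3}L^1$, and so $\partial_t b_n$ is bounded in $L^{1}(0,T;H^{-s})$ for some large $s$ (e.g.\ $s>7/2$ suffices, since $L^1\subset H^{-3/2-}$ on $\T^3$).

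\emph{Step 4: compactness.} Aubin--Lions (the Simon version with $L^1$ in time) gives strong convergence of a subsequence in $L^2(0,T;L^2)$, weak convergence in $L^2H^1$, and weak-$*$ convergence in $L^\infty L^2$.

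\emph{Step 5: limit in the nonlinear terms (main obstacle).} The MHD terms pass to the limit as for Navier--Stokes. For the Hall term, write it against a smooth divergence-free test function $\phi$ as
\[
\int (\nabla\times b_n)\times b_n\cdot\nabla\times\phi .
\]
This is a product of $\nabla b_n$, which converges weakly in $L^2L^2$, and $b_n$, which converges strongly in $L^2L^2$ and is bounded in $L^\infty L^2\cap L^2L^6$. Interpolation gives strong convergence of $b_n$ in $L^pL^q$ for some $p,q$ with exponents dual to $L^2L^2$ when tested against a bounded $\nabla\times\phi$. Indeed $b_n\to b$ strongly in $L^2L^2$ already suffices, since $\nabla\times\phi\in L^\infty$, and weak-times-strong converges. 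Alternatively, use the tensor form $\nabla\cdot(b\otimes b)$, which only needs $b_n\otimes b_n\to b\otimes b$ in $L^1$; this follows directly from the strong $L^2L^2$ convergence.

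\emph{Step 6: energy inequality.} It follows from weak lower semicontinuity of the norms in the integrated energy bound of Step 2. Combined with strong convergence at almost every time, this gives the inequality in the integrated form stated in the theorem.
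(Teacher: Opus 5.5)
Your proposal is correct and is essentially the standard Galerkin--compactness argument of \cite{ADFL}, which the paper cites rather than reproduces: the Hall term drops out of the energy identity since $\int((\nabla\times b)\times b)\cdot(\nabla\times b)\,dx=0$, Aubin--Lions--Simon gives strong convergence in $L^2(0,T;L^2)$, and the weak form $\int((\nabla\times b_n)\times b_n)\cdot\nabla\times\phi$ passes to the limit as a weak--strong product. Two small remarks: you rightly used $-b\cdot\nabla b$ in the momentum equation as in \eqref{hmhd}, since with the sign printed in \eqref{hall-mhd} the two coupling terms would double instead of cancel and no energy inequality would hold; and to get the energy inequality in the stated differential form, rather than only from $t=0$, you should pass to the limit in the Galerkin energy equality on $[s,t]$ for a.e.\ $s$, which is exactly where your a.e.-in-time strong convergence is needed.
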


For smoother divergence-free data one has local classical solvability, together
with global continuation for sufficiently small initial data; see \cite{CDL}.

\begin{Theorem}[Strong solutions]
Let \(s>\frac{5}{2}\) be an integer, and let
\(u_0,b_0\in H^s(\T^3)\) satisfy
\(\nabla\cdot u_0=\nabla\cdot b_0=0\). Then:

\medskip
\noindent
(i) There exists
\(T=T(\|u_0\|_{H^s},\|b_0\|_{H^s})>0\)
such that \((u_0,b_0)\) generates a classical solution satisfying
\[
(u,b)\in L^\infty\big(0,T;(H^s(\T^3))^2\big).
\]

\noindent
(ii) There exists a constant \(\epsilon=\epsilon(\nu,s)>0\) such that if
\[
\|u_0\|_{H^s}+\|b_0\|_{H^s}<\epsilon,
\]
then the corresponding strong solution extends globally and obeys
\[
(u,b)\in L^\infty\big(0,\infty;(H^s(\T^3))^2\big).
\]
\end{Theorem}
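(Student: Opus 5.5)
We prove the local existence part (i) by a regularized energy method and the small-data part (ii) by a continuation argument based on an a priori estimate that absorbs all nonlinear terms into the dissipation.

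\emph{Energy identities and cancellations.} Apply $\Lambda^s=(-\Delta)^{s/2}$ (or $\partial^\alpha$ with $|\alpha|\le s$) to \eqref{hall-mhd} and pair with $(\Lambda^s u,\Lambda^s b)$ in $L^2$. Write the Hall term as $d_i\nabla\times(J\times b)$ with $J=\nabla\times b$. The top-order contributions cancel for three reasons:
\begin{itemize}
\item $\int (u\cdot\nabla\Lambda^s f)\cdot\Lambda^s f=0$, by incompressibility;
\item the cross terms $b\cdot\nabla \Lambda^s b$ and $b\cdot\nabla\Lambda^s u$ cancel between the two equations;
\item since $(\Lambda^s J\times b)\cdot\Lambda^s J=0$, the Hall term contributes only commutators.
\end{itemize}
Using $\int \nabla\times(\Lambda^s(J\times b))\cdot\Lambda^s b=\int \Lambda^s(J\times b)\cdot\Lambda^s J$ and subtracting $(\Lambda^sJ\times b)\cdot \Lambda^s J=0$, the Hall contribution becomes $\int [\Lambda^s,b\times]J\cdot\Lambda^s J$. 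The Kato--Ponce commutator estimate then gives
\[
\Big|\int [\Lambda^s,b\times]J\cdot\Lambda^s J\Big|\lesssim \big(\|\nabla b\|_{\infty}\|\Lambda^{s} b\|_2+\|J\|_\infty\|\Lambda^{s}b\|_2\big)\|\Lambda^{s+1}b\|_2 .
\]

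\emph{Local existence, part (i).} Set $Y=\|u\|_{H^s}^2+\|b\|_{H^s}^2$. The estimate above loses one derivative, which is the main obstacle: it must be absorbed by the magnetic diffusion $\mu\|\Lambda^{s+1}b\|_2^2$. By Young's inequality and the embedding $H^{s}\subset W^{1,\infty}$ (valid since $s>5/2$),
\[
\frac{d}{dt}Y+\nu\|\nabla u\|_{H^s}^2+\mu\|\nabla b\|_{H^s}^2\le C(\mu)\,(Y^{3/2}+Y^2).
\]
The transport-type terms contribute $Y^{3/2}$; the Hall commutator squared over $\mu$ contributes $Y^2$. This Riccati inequality bounds $Y$ on some $[0,T]$ with $T$ depending only on $Y(0)$. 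We construct solutions by Galerkin truncation (or Friedrichs mollification, which preserves the cancellations), obtain the same uniform bound, and pass to the limit by Aubin--Lions compactness. Uniqueness follows from an $L^2$ estimate of the difference of two solutions. There the Hall difference term is again absorbed by $\mu\|\nabla(b_1-b_2)\|_2^2$, using $b_i\in L^\infty_tW^{2,\infty}$, followed by Gronwall.

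\emph{Global existence for small data, part (ii).} Since $\int_{\T^3}u$ and $\int_{\T^3}b$ are conserved, assume they vanish (otherwise absorb the means by a Galilean-type shift, or work with the homogeneous norm). Poincar\'e then gives $\|\nabla f\|_{H^s}^2\ge \|f\|_{H^s}^2$. The a priori estimate should be refined so that each nonlinear term is bounded by $Y^{1/2}$ times the dissipation:
\[
\frac{d}{dt}Y+\big(\min(\nu,\mu)-C Y^{1/2}\big)\big(\|\nabla u\|_{H^s}^2+\|\nabla b\|_{H^s}^2\big)\le 0 .
\]
For the Hall term this holds because $\|J\|_\infty\|\Lambda^s b\|_2\|\Lambda^{s+1}b\|_2\lesssim Y^{1/2}\|\nabla b\|_{H^s}^2$. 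If $Y(0)<\epsilon^2$ with $C\epsilon<\min(\nu,\mu)/2$, a continuity argument shows $Y$ is nonincreasing and stays below $\epsilon^2$. The local solution therefore extends for all time, with $(u,b)\in L^\infty(0,\infty;H^s)$.
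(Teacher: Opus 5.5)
Your proposal is correct and is essentially the standard energy method used by Chae, Degond and Liu \cite{CDL}, which the paper cites for this theorem without reproducing a proof: the top-order Hall term cancels via $(\Lambda^s J\times b)\cdot\Lambda^s J=0$, the one-derivative loss in the Kato--Ponce commutator is absorbed by $\mu\|\Lambda^{s+1}b\|_2^2$, and a smallness/continuity argument gives global existence. Two minor points: in the uniqueness step, $L^\infty_tH^3$ does not give $b_i\in L^\infty_tW^{2,\infty}$, but writing the Hall difference as $\nabla\times((\nabla\times h)\times b_1)+\nabla\times(J_2\times h)$ with $h=b_1-b_2$ shows that only $\nabla b_2\in L^\infty$ is needed; and in (ii) the mean-zero/Poincar\'e reduction is superfluous, since your commutator bounds already give $CY^{1/2}\|\nabla(u,b)\|_{H^s}^2$ directly.
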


For the long-time arguments behind determining modes, one also needs a
Prodi--Serrin type continuation criterion. We will not use its full statement
explicitly, but we note that \cite{CL} gives such a blow-up criterion in terms
of space-time integrability conditions on the velocity and magnetic field.

%\begin{Theorem}[Prodi--Serrin type regularity criterion]
%\label{rgc}
%Let $s>\frac{5}{2}$ be an integer and assume $u_0,b_0\in H^s(\T^3)$ with $\nabla\cdot u_0=\nabla\cdot b_0=0$. Let $T^*<\infty$ denote the first blow-up time of the corresponding classical solution to system (\ref{hmhd1})--(\ref{hmhd3}). Then
%\[
%\limsup_{t\nearrow T^*}
%\big(\|u(t)\|_{H^s}^2+\|b(t)\|_{H^s}^2\big)
%=\infty
%\]
%if and only if
%\[
%\|u\|_{L^q(0,T^*;L^p(\T^3))}
%+
%\|\nabla b\|_{L^\gamma(0,T^*;L^\beta(\T^3))}
%=\infty,
%\]
%where the exponents satisfy
%\[
%\frac{3}{p}+\frac{2}{q}\le1,
%\qquad
%\frac{3}{\beta}+\frac{2}{\gamma}\le1,
%\qquad
%p,\beta\in(3,\infty].
%\]
%\end{Theorem}

\subsection{Littlewood--Paley decomposition}

We next fix the dyadic frequency decomposition that will be used throughout the
paper. This is the standard Littlewood--Paley framework on the torus, written
here in the notation needed for the later energy estimates.

Let $\lambda_q=2^q$ for $q\in\mathbb{Z}$. Choose a radial cutoff function $\chi\in C_0^\infty(\R^n)$ satisfying
\[
\chi(\xi)=
\begin{cases}
1, & |\xi|\le\frac{3}{4},\\
0, & |\xi|\ge1 .
\end{cases}
\]
Define
\[
\varphi(\xi)=\chi(\xi/2)-\chi(\xi),
\qquad
\varphi_q(\xi)=
\begin{cases}
\varphi(\lambda_q^{-1}\xi), & q\ge0,\\
\chi(\xi), & q=-1 .
\end{cases}
\]

Given a vector field \(u\in\mathcal{S}'(\T^n)\), we define its dyadic blocks by
\[
\Delta_q u=u_q:=\sum_{k\in\mathbb{Z}^n}
\varphi_q(k)\hat{u}(k)e^{i2\pi k\cdot x},
\]
where \(\hat u(k)\) is the \(k\)-th Fourier coefficient of \(u\). In
particular, the very low frequencies are collected in the block \(u_{-1}\).

The field \(u\) is then recovered, in the sense of distributions, from the
series
\[
u=\sum_{q=-1}^\infty u_q .
\]

For later convenience we also write
\[
u_{\le Q}:=\sum_{q=-1}^{Q}u_q,
\qquad
u_{(P,Q]}:=\sum_{q=P+1}^{Q}u_q,
\qquad
\tilde{u}_q:=\sum_{|p-q|\le1}u_p .
\]

In this notation, the Sobolev norm admits the standard dyadic characterization
\[
\|u\|_{H^s}
=
\left(
\sum_{q\ge-1}
\lambda_q^{2s}\|u_q\|_2^2
\right)^{1/2}.
\]

We will repeatedly use the Bernstein estimate associated with this frequency
localization.

\begin{Lemma}
\label{brn}
Let \(n\) be the spatial dimension and suppose \(s\ge r\ge1\). Then
\[
\|u_q\|_r
\lesssim
\lambda_q^{\,n(\frac{1}{r}-\frac{1}{s})}
\|u_q\|_s .
\]
\end{Lemma}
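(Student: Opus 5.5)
The plan is the standard Young's-convolution argument, reduced to a reference function with uniform Fourier support. First, on $\T^n$ the block $u_q=\Delta_q u$ has Fourier support in the annulus $\{|k|\le \lambda_q\}$ (for $q\ge 0$, the support of $\varphi(\lambda_q^{-1}\cdot)$ lies in $\frac34\lambda_q\le|k|\le 2\lambda_q$; for $q=-1$, in $|k|\le 1$). Fix a radial $\psi\in C_0^\infty(\R^n)$ with $\psi\equiv1$ on the ball of radius $2$ and define the periodic kernel
\[
K_q(x)=\sum_{k\in\mathbb{Z}^n}\psi(\lambda_q^{-1}k)e^{i2\pi k\cdot x}.
\]
Then $u_q=K_q*u_q$, with convolution taken over $\T^n$. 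This is where the reduction enters: the estimate only needs $u_q$ to be frequency-localized to a ball of radius $\sim\lambda_q$, not the annulus itself.

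Second, apply Young's inequality on the torus with exponents $1+\frac1r=\frac1m+\frac1s$, where $m\ge1$ because $s\ge r$:
\[
\|u_q\|_r\le \|K_q\|_m\|u_q\|_s.
\]
It then remains to show $\|K_q\|_m\lesssim \lambda_q^{n(1-\frac1m)}=\lambda_q^{n(\frac1r-\frac1s)}$, uniformly in $q$.

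Third, bound the kernel norm. By Poisson summation, $K_q(x)=\sum_{j\in\mathbb{Z}^n}\lambda_q^n\check\psi(\lambda_q(x+j))$, the periodization of a rescaled Schwartz function. Rapid decay of $\check\psi$ gives $|K_q(x)|\lesssim \lambda_q^n(1+\lambda_q|x|)^{-N}$ on the fundamental cell $|x|\le\frac12$ (for $\lambda_q\ge1$, the tails $j\ne0$ contribute $O(\lambda_q^{n-N})$). Hence
\[
\|K_q\|_\infty\lesssim\lambda_q^n,\qquad \|K_q\|_1\lesssim 1,
\]
and interpolation yields $\|K_q\|_m\le\|K_q\|_1^{1/m}\|K_q\|_\infty^{1-1/m}\lesssim\lambda_q^{n(1-1/m)}$. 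The case $q=-1$ holds trivially, since $\lambda_{-1}$ is a fixed constant and $K_{-1}$ is a fixed smooth function.

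The main obstacle is the uniform $L^1$ bound on the periodized kernel. Writing $K_q$ as a periodization, rather than naively summing Fourier coefficients, is exactly what gives uniformity in $q$. Everything else is routine.
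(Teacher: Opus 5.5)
Your kernel construction is sound, but the exponents in the Young step are assigned the wrong way round, and the argument fails at exactly that step. With $1+\frac1r=\frac1m+\frac1s$ and $s\ge r$ you get $\frac1m=1+\frac1r-\frac1s\ge 1$, i.e.\ $m\le 1$, and $m<1$ whenever $s>r$. So your assertion that $m\ge1$ because $s\ge r$ is backwards. For $m<1$ Young's inequality is not available, and neither is the $L^1$--$L^\infty$ interpolation you use for $\|K_q\|_m$. Likewise $n(1-\frac1m)=n(\frac1s-\frac1r)$, which is the negative of the exponent you need.

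The chain cannot be rescued, because what it would deliver, $\|u_q\|_r\lesssim\lambda_q^{-n(\frac1r-\frac1s)}\|u_q\|_s$, is false. Take $u=e^{i2\pi k\cdot x}$ with $\lambda_q\le|k|\le\frac32\lambda_q$, where $\varphi_q(k)=1$. Then $u_q=u$ and $\|u_q\|_r=\|u_q\|_s=1$ on the unit torus, so the bound fails as $q\to\infty$ whenever $s>r$.

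Once the roles are swapped, your argument proves the standard Bernstein inequality $\|u_q\|_s\lesssim\lambda_q^{n(\frac1r-\frac1s)}\|u_q\|_r$ for $s\ge r\ge1$. Use Young with $1+\frac1s=\frac1m+\frac1r$, so that $\frac1m=1-(\frac1r-\frac1s)\in[0,1]$ and $\|K_q\|_m\lesssim\lambda_q^{n(1-\frac1m)}=\lambda_q^{n(\frac1r-\frac1s)}$. This is the form the paper actually uses, for example $\|h_q\|_{L^{2r/(r-2)}}\lesssim\lambda_q^{n/r}\|h_q\|_{L^2}$ and $\|\nabla b_p\|_{L^\infty}\lesssim\lambda_p^{1+n/r}\|b_p\|_{L^r}$. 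The lemma as printed has the two norms interchanged, and the paper gives no proof, quoting it as standard.

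Read literally, the printed statement needs no frequency localization at all. Since $\T^n$ has finite (here unit) measure, H\"older gives $\|u_q\|_r\le\|u_q\|_s$. Since $\lambda_q\ge\lambda_{-1}=\frac12$, we also have $\lambda_q^{n(\frac1r-\frac1s)}\ge 2^{-n}$, and the inequality follows. Either way, you must reassign the exponents before your proof is correct.

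Separately, the uniform $L^1$ bound you call the main obstacle is immediate: $\|K_q\|_{L^1(\T^n)}\le\|\lambda_q^n\check\psi(\lambda_q\,\cdot)\|_{L^1(\R^n)}=\|\check\psi\|_{L^1(\R^n)}$.
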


\subsection{Bony's paraproduct and commutator estimates}

We now recall the paradifferential decomposition used to organize nonlinear
interactions. If \(u\) and \(v\) are distributions, then formally one may write
\[
uv=\sum_{p,q\ge-1}u_p v_q .
\]
Bony's decomposition separates this product into low--high, high--low, and
comparable-frequency contributions:
\[
uv=
\sum_{q\ge-1}u_{\le q-2}v_q
+
\sum_{q\ge-1}u_q v_{\le q-2}
+
\sum_{q\ge-1}\tilde{u}_q v_q .
\]

This splitting is the basic tool for estimating the transport and Hall terms in
dyadic form.

For later use we introduce the commutators associated with the convection and
Hall operators:
\begin{equation}
[\Delta_q,u_{\le p-2}\cdot\nabla]v_p
=
\Delta_q(u_{\le p-2}\cdot\nabla v_p)
-
u_{\le p-2}\cdot\nabla\Delta_q v_p,
\label{cm1}
\end{equation}
and
\begin{equation}
[\Delta_q,b_{\le p-2}\times\nabla\times]h_p
=
\Delta_q(b_{\le p-2}\times(\nabla\times h_p))
-
b_{\le p-2}\times(\nabla\times\Delta_q h_p).
\label{cm2}
\end{equation}

Together with the divergence-free constraint, these commutators capture the
cancellation structure that appears repeatedly in the proofs.

The next estimate for the transport commutator is standard; see \cite{BCD}.

\begin{Lemma}
\label{cmest}
Let $\frac{1}{r_1}=\frac{1}{r_2}+\frac{1}{r_3}$. Then
\[
\|[\Delta_q,u_{\le p-2}\cdot\nabla]v_p\|_{r_1}
\lesssim
\|v_p\|_{r_2}
\sum_{p'\le p-2}\lambda_{p'}\|u_{p'}\|_{r_3}.
\]
\end{Lemma}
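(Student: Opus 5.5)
The proof writes the commutator as a convolution and then uses the mean value theorem to gain one derivative on the low-frequency factor.

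\emph{Step 1: kernel representation.} Let $h_q=\mathcal{F}^{-1}\varphi_q$, so that $\Delta_q f=h_q*f$. On $\T^n$ we periodize; the periodization has the same $L^1$-type bounds. Then
\[
[\Delta_q,u_{\le p-2}\cdot\nabla]v_p(x)=\int h_q(x-y)\big(u_{\le p-2}(y)-u_{\le p-2}(x)\big)\cdot\nabla v_p(y)\,dy .
\]
Since $\nabla\cdot u=0$, we have $u\cdot\nabla v=\nabla\cdot(u\otimes v)$. We integrate by parts in $y$, moving the derivative off $v_p$. The term where the derivative lands on $u_{\le p-2}(y)$ is $\int h_q(x-y)\,(\nabla\cdot u_{\le p-2})\,v_p\,dy$, which vanishes. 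What remains is
\[
\int \nabla h_q(x-y)\cdot\big(u_{\le p-2}(y)-u_{\le p-2}(x)\big)\otimes v_p(y)\,dy .
\]

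\emph{Step 2: mean value theorem.} We write
\[
u_{\le p-2}(y)-u_{\le p-2}(x)=\int_0^1 (y-x)\cdot\nabla u_{\le p-2}\big(x+\tau(y-x)\big)\,d\tau .
\]
Substituting $z=x-y$, the commutator is bounded pointwise by
\[
\int_0^1\!\!\int |z|\,|\nabla h_q(z)|\;\big|\nabla u_{\le p-2}(x-\tau z)\big|\;|v_p(x-z)|\,dz\,d\tau .
\]

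\emph{Step 3: $L^{r_1}$ norm.} We take the $L^{r_1}$ norm in $x$ and apply Minkowski's inequality to move it inside the $z,\tau$ integrals. For fixed $z,\tau$, Hölder's inequality with $\frac{1}{r_1}=\frac{1}{r_2}+\frac{1}{r_3}$ and translation invariance give
\[
\big\|\nabla u_{\le p-2}(\cdot-\tau z)\,v_p(\cdot-z)\big\|_{r_1}\le \|\nabla u_{\le p-2}\|_{r_3}\,\|v_p\|_{r_2}.
\]
Hence the commutator is bounded by
\[
\big\||z|\,\nabla h_q\big\|_{L^1}\,\|\nabla u_{\le p-2}\|_{r_3}\,\|v_p\|_{r_2}.
\]

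\emph{Step 4: kernel bound and summation.} Scaling gives $h_q(z)=\lambda_q^n h(\lambda_q z)$. Therefore
\[
\big\||z|\,\nabla h_q\big\|_{L^1}=\big\||z|\,\nabla h\big\|_{L^1},
\]
a constant independent of $q$ because $h$ is Schwartz. The case $q=-1$ is handled the same way with $\check\chi$. Finally,
\[
\|\nabla u_{\le p-2}\|_{r_3}\le\sum_{p'\le p-2}\|\nabla u_{p'}\|_{r_3}\lesssim\sum_{p'\le p-2}\lambda_{p'}\|u_{p'}\|_{r_3},
\]
where the last inequality is Bernstein's gradient bound for frequency-localized functions. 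Note that this is the derivative version, not Lemma~\ref{brn} itself. This proves the stated estimate.

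\emph{Main difficulty.} The only delicate point is the torus setting. There the periodized kernel must satisfy the moment bound $\||z|\nabla h_q\|_{L^1}\lesssim 1$ uniformly in $q$. This holds by Poisson summation, since $h$ decays rapidly. Alternatively, one can lift to periodic functions on $\R^n$, where the convolution identity holds exactly with the nonperiodic kernel $h_q$, and estimate on a fundamental domain.
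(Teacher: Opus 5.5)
Your proof is correct, and it is the standard kernel-representation plus first-order Taylor argument (Bahouri--Chemin--Danchin, Lemma 2.97) that the paper invokes by citation instead of proving. Integrating by parts to put the derivative on the kernel also spares you the observation that the commutator vanishes unless $|p-q|\lesssim 1$.

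One small point: the lemma does not assume $\nabla\cdot u=0$, but your proof uses it. Without it you only pick up the extra term $-\Delta_q\big((\nabla\cdot u_{\le p-2})\,v_p\big)$. That term is bounded by $\|h_q\|_{L^1}\|\nabla u_{\le p-2}\|_{r_3}\|v_p\|_{r_2}$, and hence by the same right-hand side, so the argument goes through unchanged.
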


The Hall commutator satisfies the analogous bound.

\begin{Lemma}
\label{cmmes}
Let $\frac{1}{r_1}=\frac{1}{r_2}+\frac{1}{r_3}$.
Assume $\nabla\cdot b_{\le p-2}=0$. Then
\[
\|[\Delta_q,b_{\le p-2}\times\nabla\times]h_p\|_{r_1}
\lesssim
\|h_p\|_{r_2}
\sum_{p'\le p-2}\lambda_{p'}\|b_{p'}\|_{r_3} .
\]
\end{Lemma}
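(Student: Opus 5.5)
The plan is to reduce the Hall commutator to the transport commutator of Lemma \ref{cmest} by way of a vector identity, and then use the kernel representation of $\Delta_q$ to bound whatever is left over. Write $h=h_p$ and $B=b_{\le p-2}$. For a fixed vector field $B$ we have the pointwise identity
\[
B\times(\nabla\times h)=\nabla(B\cdot h)\big|_{B\ \text{frozen}}-(B\cdot\nabla)h,
\]
that is, componentwise,
\[
\big(B\times(\nabla\times h)\big)_i=\sum_j B_j\partial_i h_j-\sum_j B_j\partial_j h_i .
\]
The second term is a transport term. Its commutator with $\Delta_q$ is exactly $-[\Delta_q,B\cdot\nabla]h$ from \eqref{cm1}, and Lemma \ref{cmest} bounds it by $\|h_p\|_{r_2}\sum_{p'\le p-2}\lambda_{p'}\|b_{p'}\|_{r_3}$ in $L^{r_1}$.

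The first term needs a separate argument. Its commutator, componentwise, is
\[
\sum_j [\Delta_q,B_j]\partial_i h_j .
\]
This is a commutator of $\Delta_q$ with multiplication by the scalar $B_j$, applied to a derivative of $h$. I would treat it exactly as in the proof of Lemma \ref{cmest}. Writing $\Delta_q f=\int \Phi_q(y) f(x-y)\,dy$ with $\Phi_q=\lambda_q^n\Phi(\lambda_q\cdot)$ (periodized), we get
\[
[\Delta_q,B_j]\partial_i h_j(x)=\int \Phi_q(y)\big(B_j(x-y)-B_j(x)\big)\partial_i h_j(x-y)\,dy .
\]
Next I integrate by parts in $y$, moving $\partial_i$ off $h_j$. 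The derivative then lands either on $\Phi_q$, which gains a factor $\lambda_q$, or on $B_j(x-y)$. In the first case, the mean value theorem gives
\[
B_j(x-y)-B_j(x)=-\int_0^1 y\cdot\nabla B_j(x-\tau y)\,d\tau,
\]
and $|y|\,|\nabla\Phi_q(y)|$ has $L^1$ norm $O(1)$ uniformly in $q$. In the second case we are left with $\Delta_q$-type averaging of $(\partial_i B_j) h_j$ and $B_j$-independent pieces. Each of these pieces is estimated by Minkowski and Hölder by $\|\nabla B\|_{r_3}\|h\|_{r_2}$.

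Finally,
\[
\|\nabla b_{\le p-2}\|_{r_3}\le\sum_{p'\le p-2}\lambda_{p'}\|b_{p'}\|_{r_3}
\]
by Bernstein's inequality (Lemma \ref{brn}) applied blockwise.

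I expect the main obstacle to be the bookkeeping in the integration by parts. After moving $\partial_i$, one must check that no term carries an uncompensated factor $\lambda_q\|B\|_{r_3}$ without a matching difference $B(x-y)-B(x)$. The clean way to do this is to note that
\[
\partial_{y_i}\Big[\Phi_q(y)\big(B_j(x-y)-B_j(x)\big)\Big]
\]
splits into a term with $\partial_i\Phi_q$ multiplying the difference $B_j(x-y)-B_j(x)$, which is handled by the mean value argument above, and a term $-\Phi_q(y)\,\partial_iB_j(x-y)$, which is bounded directly. Neither term produces a bare $\lambda_q\|B\|$.

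The divergence-free assumption is not strictly needed for this bound, though it would allow rewriting $\sum_j B_j\partial_i h_j$ in divergence form if preferred. Alternatively, one can appeal to the general commutator estimate in \cite{BCD} for $[\Delta_q,a]\partial f$ with $\nabla a\in L^{r_3}$.
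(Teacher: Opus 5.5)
Your argument is correct and is essentially the intended one. The paper gives no proof; it only asserts that the Hall commutator obeys the same bound as the transport commutator of Lemma~\ref{cmest}. Your identity $\big(B\times(\nabla\times h)\big)_i=\sum_j B_j\partial_i h_j-\sum_j B_j\partial_j h_i$, together with the standard kernel and mean-value estimate for $[\Delta_q,B_j]\partial_i h_j$ (integrating by parts so the derivative falls on $\Phi_q$ or on $B_j$, never leaving a bare $\lambda_q\|B\|$), is exactly what makes that analogy rigorous.

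As you note, the argument needs neither $\nabla\cdot b_{\le p-2}=0$ nor the frequency localization of $h_p$. One small slip: the blockwise bound $\|\nabla b_{p'}\|_{r_3}\lesssim\lambda_{p'}\|b_{p'}\|_{r_3}$ is the derivative form of Bernstein's inequality, not Lemma~\ref{brn} as stated, and it holds with $\lesssim$ rather than $\le$.
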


Additional background on Littlewood--Paley theory and paradifferential calculus can
be found in the monograph of Bahouri, Chemin and Danchin \cite{BCD}.

\vspace{0.5cm}

\section{Existence of Determining Wavenumbers for Hall- and Electron-MHD}

\subsection{Electron-MHD}
First note that the equation for electron MHD is given by,
\begin{equation}\label{emhd}
\begin{split}
b_t+ \nabla\times ((\nabla\times b)\times b)=&\ \mu\Delta b,\\
\nabla\cdot b=&\ 0.
\end{split}
\end{equation}
When $n=2$, it is understood that
\[b=b(x,y,t)=\left(b_1(x,y,t), b_2(x,y,t), b_3(x,y,t) \right).\]
We will establish the existence of determining wavenumber for (\ref{emhd}) following the framework developed in \cite{1, 2}  via using wavenumber splitting. 

According to the scaling of (\ref{emhd}), we define the wavenumber 
\begin{equation}\label{wave}
\Lambda_{b,r} (t)=\min \{\lambda_q: (L\lambda _{p-q})^{\delta}\lambda_p^{\frac{n}{r}}\|B_p\|_{L^r}<c_r \mu, \forall \ p>q; \ \mbox{and} \ \ \lambda_q^{-1}\, \|\nabla B_{\leq q}\|_{L^\infty}<c_r\mu, q\in \mathbb N\}
\end{equation}
for a small constant $c_r>0$. Note both quantities $\lambda_p^{\frac{n}{r}}\|B_p\|_{L^r}$ and $\lambda_q^{-1}\, \|\nabla B_{\leq q}\|_{L^\infty}$ are scaling invariant. We prove

\begin{Theorem}\label{thm-det}
Let $b^{(1)}(t)$ and $b^{(2)}(t)$ be two weak solutions of (\ref{emhd}). Let $\Lambda_{b^{(1)}}(t)$ and $\Lambda_{b^{(2)}}(t)$ be the wavenumber defined for $b^{(1)}(t)$ and $b^{(2)}(t)$ respectively as in (\ref{wave}). Denote 
\[\Lambda (t)=\max\{\Lambda_{b^{(1)}}(t), \Lambda_{b^{(2)}}(t)\} \] and let $Q(t)$ be such that $\Lambda(t)=\lambda_{Q(t)}$. 
Assume \[ \left(b^{(1)}-b^{(2)}\right)|_{\leq \Lambda}=0. \]
Then we have 
\[\lim_{t\to\infty} \|b^{(1)}-b^{(2)}\|_{H^s}=0, \ \ \ \mbox{for} \ \ -\frac{n}{r}<s<\frac{n}{r}-1 \, \quad \textit{and} \quad \delta\geq 0.\]
\end{Theorem}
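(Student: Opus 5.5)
We set $w=b^{(1)}-b^{(2)}$ and derive an $H^s$ energy inequality at the dyadic level. Subtracting the two copies of (\ref{emhd}) gives
\[
w_t+\nabla\times\big((\nabla\times w)\times b^{(1)}\big)+\nabla\times\big((\nabla\times b^{(2)})\times w\big)=\mu\Delta w .
\]
We project with $\Delta_q$, pair with $\lambda_q^{2s}w_q$, and sum over $q\ge-1$. Since $w_{\le Q(t)}=0$, only $q>Q$ contributes on the left, and Bernstein yields
\[
\frac12\frac{d}{dt}\|w\|_{H^s}^2+c\mu\Lambda^2\|w\|_{H^s}^2\le \frac12\frac{d}{dt}\|w\|_{H^s}^2+\mu\sum_q\lambda_q^{2s+2}\|w_q\|_2^2 .
\]
So it suffices to show that the nonlinear contributions are bounded by $\frac{\mu}{2}\sum_q\lambda_q^{2s+2}\|w_q\|_2^2$. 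Granting this, we get $\frac{d}{dt}\|w\|_{H^s}^2+c\mu\Lambda^2\|w\|_{H^s}^2\le0$, and since $\Lambda\ge\lambda_0\ge1$, the norm decays exponentially. Strictly, for weak solutions the computation is performed on finitely many modes and then passed to the limit, or justified in the weak sense as in \cite{1,2}.

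For each nonlinear term we use Bony's decomposition, writing $b^{(i)}=b^{(i)}_{\le Q}+\sum_{p>Q}b^{(i)}_p$, and we estimate $w$ only at frequencies above $Q$.

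\textbf{Low-frequency part of $b^{(1)}$.} In the term $\nabla\times((\nabla\times w)\times b^{(1)})$, paired with $w_q$, we move the curl onto $w_q$. When $b^{(1)}$ is at low frequency (paraproduct with $b^{(1)}_{\le p-2}$), the leading piece $\int b^{(1)}_{\le p-2}\cdot\big((\nabla\times w_q)\times(\nabla\times w_q)\big)$ vanishes. The commutator (Lemma \ref{cmmes}) costs $\|\nabla b^{(1)}_{\le p-2}\|_\infty\,\lambda_q\|w_q\|_2^2$. We then split $b^{(1)}_{\le p-2}=b^{(1)}_{\le Q}+b^{(1)}_{(Q,p-2]}$:
\begin{itemize}
\item the piece $b^{(1)}_{\le Q}$ is controlled by $\|\nabla b^{(1)}_{\le Q}\|_\infty<c_r\mu\Lambda$, which gives $c_r\mu\lambda_q^2\|w_q\|_2^2$;
\item the intermediate shells are controlled through the first condition in (\ref{wave}) via Bernstein: $\lambda_{p'}\|b_{p'}\|_\infty\lesssim\lambda_{p'}^{1+n/r}\|b_{p'}\|_r\le c_r\mu\lambda_{p'}$, which gives a geometric sum bounded by $c_r\mu\lambda_q$.
\end{itemize}
This is where $\delta\ge0$ enters: the factor $(L\lambda_{p-q})^\delta\ge1$ for $L\ge1$, so it only helps.

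\textbf{High-frequency parts of $b$, and the main obstacle.} The high–low and high–high interactions, where $b^{(1)}_p$ or $b^{(2)}_p$ has $p>Q$, paired with $\lambda_q^{2s}w_q$, require Hölder in $L^r\times L^{2r/(r-2)}\times L^2$-type norms together with Bernstein. Each such term has the shape
\[
\lambda_q^{2s+1}\,\lambda_p\,\|b_p\|_r\,\|w_{p'}\|_{\tilde r}\,\|w_q\|_2 .
\]
Inserting $\lambda_p^{n/r}\|b_p\|_r<c_r\mu$ leaves a double sum
\[
\sum_{p,q}\lambda_q^{s+1}\lambda_p^{s+1}\,\lambda_{|p-q|}^{\,\cdot}\,\|w_p\|_2\|w_q\|_2 ,
\]
which closes by discrete Young's inequality provided the kernel exponent is summable. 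This is the main obstacle: tracking the powers shows summability in the low–high direction requires $s<\frac nr-1$ and in the high–high direction requires $s>-\frac nr$, which is exactly the stated range. I would verify this exponent bookkeeping term by term, namely the $b^{(2)}$ term in the three Bony pieces and the $b^{(1)}$ high–low and high–high pieces.

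With all terms bounded by $Cc_r\mu\sum_q\lambda_q^{2s+2}\|w_q\|_2^2$ and $c_r$ chosen small, the decay follows as described above.
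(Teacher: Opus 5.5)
Your proposal is correct and follows essentially the paper's proof. It uses the same difference equation and dyadic $H^s$ energy estimate, and the same Bony splitting of the two Hall terms, with the commutator cancellation for the low-frequency $b^{(1)}$ paraproduct and H\"older in $L^r\times L^{\frac{2r}{r-2}}\times L^2$ plus Bernstein for the rest. The constraints on $s$ arise from the same terms: $s<\frac nr-1$ from the high-$b^{(2)}$/low-$w$ piece and $s>-\frac nr$ from the resonant pieces. When you do the term-by-term bookkeeping, also include the boundary terms where $b_Q$, which the high-frequency condition does not cover, meets $w_{Q+1}$, and the commutator remainder involving $b^{(1)}_{\le p-2}-b^{(1)}_{\le q-2}$. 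Both are handled, as in the paper's $J_{31}$ and $K_{31}$, by $\|b_{p'}\|_{L^\infty}\lesssim\lambda_{p'}^{-1}\|\nabla b_{\le Q}\|_{L^\infty}\lesssim c_r\mu$ for $p'\le Q$ near $Q$.
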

In proof, we see that the parameter $r\in (n,2n)$, $s$ can be as close as to $0$; and hence the convergence occurs in space close to $L^2(\mathbb T^n)$.

\bigskip

\subsection{Hall-MHD}
The Incompressible Hall-MHD equations are given by 
 \begin{align}\label{hmhd}
 \begin{split}
    &u_t+(u\cdot \nabla)u-(b\cdot\nabla)b+\nabla P=\nu \Delta u \\
    &b_t+(u\cdot \nabla)b-(b\cdot \nabla)u+\nabla\times((\nabla\times b)\times b)=\mu\Delta B\\
    &\nabla\cdot u=0, \qquad \nabla\cdot b=0.
    \end{split}
    \end{align}

%For this system, we aim to define determining wavenumbers $\Lambda_{u,r}$ and $\Lambda_{b,r}$ that are optimal. To that end, we define $\Lambda_{u,r}$ identical to Ref.\cite{1} where authors showed that such definition of a determining wavenumber for 3D Naiver Stokes equation is physically optimal. The motivation for this choice is due to the fact that first equation in Hall-MHD is very similar to Navier Stokes equation and has the same scaling. We define $\Lambda_{b,r}$ in analogous way which also physically optimal, as supposed to definitions of wavenumebrs given in Ref.\cite{3} that were not optimal and lacked any comparison with physical phenomenological dissipation scales for Hall-MHD.\\

For this system, we aim to define determining wavenumbers $\Lambda_{u,r}$ and $\Lambda_{b,r}$ that are physically optimal. To this end, we define $\Lambda_{u,r}$ identically to Ref.~\cite{1}, where the authors showed that such a definition of a determining wavenumber for the 3D Navier–Stokes equations is physically optimal. The motivation for this choice stems from the fact that the velocity equation in Hall–MHD closely resembles the Navier–Stokes equations and exhibits the same scaling properties.

We then define $\Lambda_{b,r}$ in an analogous manner, leading again to a physically optimal characterization. This contrasts with the definitions of determining wavenumbers proposed in Ref.~\cite{Han-Liu}, which are not optimal and do not provide any comparison with phenomenological dissipation scales predicted by Hall–MHD turbulence theory.\\

Now we let $0<\gamma\leq 3$ and define $\sigma:=\frac{\gamma-1}{2}$. This implies that $-\frac{1}{2}\leq \sigma\leq 1$. Therefore, we define

\begin{equation}\notag
\Lambda_{u,r}(t):=\min \{ \lambda_q: (L\,\lambda_{p-q})^{\sigma}\, \lambda_{q}^{-1}\,|| u_{p}||_{L^{\infty}}<c_{r}\nu, \, \forall p>q \, \textit{ and } \, \lambda_{q}^{-2}\,|| \nabla u_{\leq q}||_{L^{\infty}}<c_r \nu, \, q\in\mathbb{N} \}
\end{equation}
 \begin{equation}\notag
\Lambda_{b,r}(t):=\min \{ \lambda_q: (L\lambda_{p-q})^{\delta} \, \lambda_{p}^{\frac{n}{r}}\, || b_{p}||_{L^{r}}<c_{r}\mu, \, \forall p>q \, \textit{ and } \,\lambda^{-1}_{q}\, || \nabla b_{\leq q}||_{L^{\infty}}<c_r \mu, \, q\in\mathbb{N} \}
\end{equation} 
where $r\in(n,2n)$ and $\delta\geq 0$. Now  let's denote 
\begin{equation}\label{Hall-Wave}
\begin{split}
\Lambda_U(t):=&\max\{ \Lambda_u(t), \Lambda_v(t)\}\\
\Lambda_B(t):=&\max\{ \Lambda_{b^{(1)}}(t), \Lambda_{b^{(2)}}(t)\}
\end{split}
\end{equation}

% \begin{equation}
 %w(t)\vert_{\leq Q_U(t)}=0, \qquad h(t)\vert_{\leq Q_B(t)}=0.
 %\end{equation}
%Under these considerations, 

Now, we can prove the following result, 

\begin{Theorem}\label{thm-Hall-det}
Let $(u(t),b^{(1)}(t))$ and $(v(t),b^{(2)}(t))$ be two weak solutions of (\ref{hall-mhd}). Let $\Lambda_B(t)$ and $\Lambda_U(t)$ be the wavenumbers defined in (\ref{Hall-Wave}). Let $Q_{U}(t)$ and $Q_{B}(t)$ be such that $\Lambda_U(t)=\Lambda_{Q_{U}(t)}$ and $\Lambda_B(t)=\Lambda_{Q_{B}(t)}$, respectively. Assume that 
\[ \Big( \, (u-v)\big|_{\leq \Lambda_U}, (b^{(1)}-b^{(2)})\big|_{\leq \Lambda_B} \Big) =0. \]
Assume moreover that
\[
\delta>1-\frac{n}{r},
\qquad
\sigma>1-\frac{n}{r}.
\]
Then we have 
\[\lim_{t\to\infty} \big(  \|u-v\|_{H^s} + \|b^{(1)}-b^{(2)}\|_{H^s}\big)=0, \ \ \ \mbox{for} \ \ -\min\left\{\frac{n}{r},\delta,\sigma\right\}<s<\frac{n}{r}-1.\]
\end{Theorem}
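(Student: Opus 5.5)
We set $w=u-v$ and $h=b^{(1)}-b^{(2)}$ and subtract the two Hall-MHD systems. This gives
\begin{align*}
&w_t+u\cdot\nabla w+w\cdot\nabla v-b^{(1)}\cdot\nabla h-h\cdot\nabla b^{(2)}+\nabla\pi=\nu\Delta w,\\
&h_t+u\cdot\nabla h+w\cdot\nabla b^{(2)}-b^{(1)}\cdot\nabla w-h\cdot\nabla v+\nabla\times\big((\nabla\times h)\times b^{(1)}+(\nabla\times b^{(2)})\times h\big)=\mu\Delta h.
\end{align*}
We apply $\Delta_q$ to each equation, test with $\lambda_q^{2s}w_q$ and $\lambda_q^{2s}h_q$ respectively, and sum over $q\ge -1$. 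This yields an energy-type inequality
\[
\frac12\frac{d}{dt}\big(\|w\|_{H^s}^2+\|h\|_{H^s}^2\big)+\nu\|w\|_{H^{s+1}}^2+\mu\|h\|_{H^{s+1}}^2\le \sum_j |I_j|,
\]
where the $I_j$ are the nonlinear flux terms. The hypothesis $w_{\le Q_U}=0$, $h_{\le Q_B}=0$ means that only modes $q>Q_U$ (respectively $q>Q_B$) of $w$ and $h$ are present. In each pairing we may therefore take one factor to be a high mode of $w$ or $h$. The goal is to bound every $I_j$ by a small multiple, $C c_r$, of $\nu\|w\|_{H^{s+1}}^2+\mu\|h\|_{H^{s+1}}^2$. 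This multiple is absorbed once $c_r$ is small. The remaining dissipation then gives $\frac{d}{dt}E+c\,\min(\nu,\mu)\lambda_{Q}^{2}E\le0$ with $\lambda_Q\ge1$, and since $E$ is supported on high modes this yields exponential decay of $E=\|w\|_{H^s}^2+\|h\|_{H^s}^2$. For weak solutions this is done on the Galerkin/integrated level, as in \cite{1,2}.

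Each trilinear term is split with Bony's decomposition, and the estimates proceed in the following order.

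\textbf{Transport terms.} The terms $u\cdot\nabla w$, $u\cdot\nabla h$, $b^{(1)}\cdot\nabla h$ and $b^{(1)}\cdot\nabla w$ are treated first. We use the commutators \eqref{cm1} and Lemma~\ref{cmest}, together with the cancellation $\int u_{\le p-2}\cdot\nabla w_q\, w_q=0$. For the cross pair $b^{(1)}\cdot\nabla h$ in the $w$-equation and $b^{(1)}\cdot\nabla w$ in the $h$-equation, the low-mode parts cancel against each other after summation. In the low-high pieces the low frequencies of $u$ and $b^{(1)}$ are controlled by the second condition in the definitions, namely $\lambda_q^{-2}\|\nabla u_{\le q}\|_\infty<c_r\nu$ and $\lambda_q^{-1}\|\nabla b_{\le q}\|_\infty<c_r\mu$. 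In the high-low and high-high pieces the high modes are controlled by the first conditions, $(L\lambda_{p-q})^{\sigma}\lambda_q^{-1}\|u_p\|_\infty<c_r\nu$ and $(L\lambda_{p-q})^{\delta}\lambda_p^{n/r}\|b_p\|_r<c_r\mu$.

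\textbf{Stretching terms.} The terms $w\cdot\nabla v$, $h\cdot\nabla b^{(2)}$, $w\cdot\nabla b^{(2)}$ and $h\cdot\nabla v$ are handled the same way, with the background fields now $v$ and $b^{(2)}$, using the same wavenumber conditions for those fields.

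\textbf{Hall terms.} For these we use \eqref{cm2}, Lemma~\ref{cmmes} and the identity $\int (\nabla\times h_q)\times b_{\le p-2}\cdot\nabla\times h_q=0$.

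In the dyadic sums we use Bernstein (Lemma~\ref{brn}) to pass from $L^r$ to $L^2$ norms of the $w$ and $h$ blocks. This is where $r\in(n,2n)$ enters: we pair $\|h_q\|_{L^{2r/(r-2)}}\lesssim\lambda_q^{n/r}\|h_q\|_2$ against $\|b_p\|_r$. The factors $(L\lambda_{p-q})^{\delta}$ and $(L\lambda_{p-q})^{\sigma}$ supply the decay in $|p-q|$, so that the discrete Young convolutions converge.

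The main obstacle is the high-high and high-low Hall interactions, together with the term $\nabla\times((\nabla\times b^{(2)})\times h)$. This term carries two derivatives, with only a single power of $\mu$ available to absorb it, and the geometric sums produce weights $\lambda_{q-p}^{s+1-n/r}$ and $\lambda_{p-q}^{-s-\delta}$ (respectively $\lambda_{p-q}^{-s-\sigma}$). Summability of these requires exactly $s<\frac{n}{r}-1$ and $s>-\min\{\frac{n}{r},\delta,\sigma\}$. The hypotheses $\delta,\sigma>1-\frac{n}{r}$ guarantee that this range of $s$ is nonempty. The mixed terms $b\cdot\nabla w$ and $w\cdot\nabla b$, where a $u$-weighted condition with scaling $\lambda_q^{-1}$ meets an $H^{s+1}$ norm of $h$, must be balanced using Young's inequality between $\nu$ and $\mu$. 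I would first try to arrange that $Q_U$ and $Q_B$ enter symmetrically by estimating the relevant block with whichever of the two cutoffs is larger.
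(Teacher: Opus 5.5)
Your plan follows the paper's route. You take the $\lambda_q^{2s}$-weighted Littlewood--Paley energy of $(w,h)$, split every flux by Bony, and use the commutator lemmas and divergence-free cancellations. You control the background fields by the two conditions in $\Lambda_U,\Lambda_B$, absorb for small $c_r$ (depending on $\nu/\mu$), and finish with Gronwall. The Hall terms are exactly the EMHD terms with $Q=Q_B$ and are not the difficulty. Your pairing of the two cross low--high terms, so that they cancel only in the sum, is correct; the paper asserts that each vanishes separately, which is false.

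The genuine gap is the one you leave open: the interaction of the two independent cutoffs. The fix you propose cannot work. Suppose $Q_B<Q_U$. Then $h_q\neq 0$ is allowed for $Q_B<q\le Q_U$. In the induction equation, the transport commutator $[\Delta_q,u_{\le p-2}\cdot\nabla]h_p$ and the stretching piece $h_p\cdot\nabla v_{\le p-2}$ ($|p-q|\le 2$) must be absorbed by $\mu\lambda_q^{2s+2}\|h_q\|_2^2$. That needs $\|\nabla u_{\le q}\|_\infty+\|\nabla v_{\le q}\|_\infty\lesssim c\,\mu\lambda_q^2$ with $c$ small. The definition of $\Lambda_U$ gives only $\|\nabla u_{\le Q_U}\|_\infty<c_r\nu\Lambda_U^2$ and guarantees nothing below $Q_U$. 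You are therefore left with the factor $c_r\frac{\nu}{\mu}\big(\Lambda_U/\lambda_q\big)^2$, which is unbounded when $Q_U-Q_B$ is large.

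These terms are linear in $h$ with no cancellation: stretching never cancels, and for $s\neq 0$ the weighted transport commutator does not either. So no choice of H\"older exponents or of $c_r$ rescues them. Estimating with the larger cutoff would require $h_{\le Q_U}=0$, which is not assumed. Estimating with the smaller one would require the velocity conditions at level $Q_B<Q_U$, which the definition does not provide. By contrast, the terms you single out, $b\cdot\nabla w$ and $w\cdot\nabla b$ and their partners in the $w$-equation, are not where it breaks. Because $-1<s<\frac nr-1<0$, they can be closed in either ordering of $Q_U,Q_B$, writing the high--high pieces in divergence form.

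The paper's proof does not supply the missing idea either. In several pieces of $I_5$ (e.g.\ $I_{5111}$, $I_{521}$, $I_{531}$) it absorbs powers of $\lambda_{Q_U}$ into the implicit constant. That is illegitimate, since $Q_U=Q_U(t)$ depends on the solutions while $c_r$ must be fixed beforehand. In $I_{532}$ it restricts $h_q$ to $q>Q_U$, which is justified only if $Q_B\ge Q_U$, and $I_7$ is only asserted by analogy.

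Your instinct to use the larger cutoff is right, but it has to enter the hypothesis rather than the estimates. Assume $h_{\le \max\{Q_U,Q_B\}}=0$. Alternatively, build velocity control at magnetic scales into $\Lambda_B$, e.g.\ $\lambda_q^{-2}\|\nabla u_{\le q}\|_\infty<c_r\mu$ together with the corresponding high-mode condition. For $\delta,\sigma\ge 0$ both admissible sets are upward closed up to harmless constants. With the strengthened hypothesis, the velocity and magnetic bounds are then available at every frequency where $h$ lives, and the obstruction above disappears.
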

Since \(r\in(n,2n)\), the above assumptions imply in particular that \(r>\frac n2\), \(\delta>0\), and, together with \(\sigma=\frac{\gamma-1}{2}\) for \(0<\gamma\le 3\), also \(0<\sigma\le 1\).

\bigskip

\section{Proofs of Theorem \ref{thm-det} and Theorem \ref{thm-Hall-det}}  

\subsection{Proof of Theorem \ref{thm-det}}
We prove Theorem \ref{thm-det} in this section. 
Denote $h(t)=b^{(1)}(t)-b^{(2)}(t)$, which satisfies the equation
\begin{equation}\label{eq-H}
h_t+\nabla\times \left((\nabla\times h )\times b^{(1)} \right)+\nabla\times \left((\nabla\times b^{(2)} )\times h) \right)=\mu\Delta h
\end{equation}
with $\nabla \cdot h=0$. It is clear from the assumption of the theorem that $h|_{\leq Q}\equiv 0$. On the other hand, because of the definition of $\lambda_Q$ we have 
\begin{equation}\label{high-modes}
\begin{split}
 (L\lambda _{q-Q})^{\delta} \lambda_q^{\frac{n}{r}}\|b^{(1), (2)}_q\|_{L^r}<&\ c_r \mu, \ \ \forall \ \ q>Q, \\
\|\nabla b^{(1), (2)}_{\leq Q}\|_{L^\infty}<&\ c_r \mu \lambda_Q.
\end{split}
\end{equation}
In particular, Bernstein's inequality gives
\begin{equation}\label{low-Q-block}
\|b^{(1), (2)}_Q\|_{L^\infty}\lesssim \lambda_Q^{-1}\|\nabla b^{(1), (2)}_{\leq Q}\|_{L^\infty}\lesssim c_r \mu.
\end{equation}

We estimate the $H^{s}$ norm of $h(t)$ for appropriate $s$ in the following. It follows from (\ref{eq-H})
\begin{equation}\label{est-energy1}
\begin{split}
&\frac12\frac{d}{dt} \sum_{q\geq -1} \lambda_q^{2s}\|h_q\|_{L^2(\mathbb T^n)}^2+\mu \sum_{q\geq -1} \lambda_q^{2s+2}\|h_q\|_{L^2(\mathbb T^n)}^2\\
=& -\sum_{q\geq -1} \lambda_q^{2s} \int_{\mathbb T^n} \Delta_q\left((\nabla\times h)\times b^{(1)}\right)\cdot \nabla\times h_q\, d\vec x\\
&-\sum_{q\geq -1} \lambda_q^{2s} \int_{\mathbb T^n} \Delta_q\left((\nabla\times b^{(2)})\times h\right)\cdot \nabla\times h_q\, d\vec x\\
=&: -J-K.
\end{split}
\end{equation}

We start by estimating $J$. First we use Bony's paraproduct to decompose $J$,
\begin{equation}\notag
\begin{split}
J=&\sum_{q\geq -1}\lambda_q^{2s} \int_{\mathbb T^n} \Delta_q\left((\nabla\times h)\times b^{(1)}\right)\cdot \nabla\times h_q\, d\vec x\\
=&\sum_{q\geq -1}\sum_{|p-q|\leq 2} \lambda_q^{2s} \int_{\mathbb T^n} \Delta_q\left((\nabla\times h_{\leq p-2})\times b^{(1)}_p\right)\cdot \nabla\times h_q\, d\vec x\\
&+\sum_{q\geq -1}\sum_{|p-q|\leq 2} \lambda_q^{2s} \int_{\mathbb T^n} \Delta_q\left((\nabla\times h_{p})\times b^{(1)}_{\leq p-2}\right)\cdot \nabla\times h_q\, d\vec x\\
&+\sum_{q\geq -1}\sum_{p\geq q-2} \lambda_q^{2s} \int_{\mathbb T^n} \Delta_q\left((\nabla\times \widetilde{h}_{p})\times b^{(1)}_p\right)\cdot \nabla\times h_q\, d\vec x\\
=&: J_1+J_2+J_3.
\end{split}
\end{equation}
We start by estimating $J_1$. By applying  H\"older's inequality, Bernstein's inequality, we have
\begin{equation}\notag
\begin{split}
|J_1|\leq & \sum_{q>Q} \sum_{\substack {|p-q|\leq 2\\ p>Q+2}}\lambda_q^{2s} \int_{\mathbb T^n} \left|\Delta_q\left((\nabla\times h_{(Q, p-2]})\times b^{(1)}_p\right)\cdot \nabla\times h_q\right|\, d\vec x \\
\leq & \sum_{q>Q}\sum_{|p-q|\leq 2} \lambda_q^{2s} \|\nabla\times h_{(Q, p-2]}\|_{L^{\frac{2r}{r-2}}}\|b^{(1)}_p\|_{L^r} \|\nabla\times h_q\|_{L^2}\\
\lesssim &\ c_r\mu\sum_{q>Q}\sum_{|p-q|\leq 2} \lambda_q^{2s+1} \|h_q\|_{L^2}\lambda_p^{-\frac{n}{r}-\delta} \,\lambda_{Q}^{\delta} \,  \sum_{Q<p'\leq p-2} \lambda_{p'}^{1+\frac{n}{r}}\|h_{p'}\|_{L^2}\\
\lesssim &\ c_r\mu \sum_{q>Q}  \lambda_q^{2s+1-\frac{n}{r}-\delta } \|h_q\|_{L^2} \, \lambda_{Q}^{\delta} \,\sum_{Q<p'\leq q} \lambda_{p'}^{1+\frac{n}{r}}\|h_{p'}\|_{L^2}\\
\lesssim &\ c_r\mu \sum_{q>Q}  \lambda_q^{s+1} \|h_q\|_{L^2} \,  \lambda_{Q-q}^{\delta} \, \sum_{Q<p'\leq q} \lambda_{p'}^{s+1}\|h_{p'}\|_{L^2} \lambda_{q-p'}^{s-\frac{n}{r}}\\
\lesssim &\ c_r\mu \sum_{q\geq -1}  \lambda_q^{2s+2} \|h_q\|_{L^2}^2.
\end{split}
\end{equation}
 where we used $\delta\geq 0$, and for $s-\frac{n}{r}<0$ we applied Jensen's inequality.

To estimate $J_2$, we use the commutator
\[[\Delta_q, b^{(1)}_{\leq p-2}\times \nabla\times ]h_p=\Delta_q\left(b^{(1)}_{\leq p-2}\times(\nabla\times h_p)\right)- b^{(1)}_{\leq p-2}\times \nabla\times \Delta_q(h_p)\]
to further decompose $J_2$,
\begin{equation}\notag
\begin{split}
J_2=&-\sum_{q\geq -1}\sum_{|p-q|\leq 2} \lambda_q^{2s} \int_{\mathbb T^n} \Delta_q\left(b^{(1)}_{\leq p-2}\times (\nabla\times h_{p})\right)\cdot \nabla\times h_q\, d\vec x\\
=&-\sum_{q\geq -1}\sum_{|p-q|\leq 2} \lambda_q^{2s} \int_{\mathbb T^n} [\Delta_q, b^{(1)}_{\leq p-2}\times \nabla\times ]h_p \cdot \nabla\times h_q\, d\vec x\\
&-\sum_{q\geq -1}\sum_{|p-q|\leq 2} \lambda_q^{2s} \int_{\mathbb T^n} b^{(1)}_{\leq q-2}\times \nabla\times \Delta_q(h_p) \cdot \nabla\times h_q\, d\vec x\\
&-\sum_{q\geq -1}\sum_{|p-q|\leq 2} \lambda_q^{2s} \int_{\mathbb T^n} \left(b^{(1)}_{\leq p-2}-b^{(1)}_{\leq q-2}\right)\times \nabla\times \Delta_q(h_p) \cdot \nabla\times h_q\, d\vec x\\
=&:J_{21}+J_{22}+J_{23}.
\end{split}
\end{equation}
Note
\begin{equation}\notag
J_{22}=-\sum_{q\geq -1} \lambda_q^{2s} \int_{\mathbb T^n} b^{(1)}_{\leq q-2}\times (\nabla\times h_q) \cdot \nabla\times h_q\, d\vec x
=0.
\end{equation} 
and, using Lemma \ref{cmmes} with $(r_1,r_2,r_3)=\left(2,\frac{2r}{r-2},r\right)$, we estimate $J_{21}$ and $J_{23}$ according to

\begin{equation}\notag
\begin{split}
|J_{21}|\leq &  \sum_{q>Q} \sum_{\substack {|p-q|\leq 2\\ p>Q+2}}\lambda_q^{2s} \|\nabla\times b^{(1)}_{(Q,p-2]}\|_{L^r} \|h_p\|_{L^{\frac{2r}{r-2}}}\|\nabla\times h_q\|_{L^2}\\
\lesssim &  \sum_{q>Q} \sum_{\substack {|p-q|\leq 2\\ p>Q+2}}\lambda_q^{2s+1} \|h_q\|_{L^2} \lambda_p^{\frac{n}{r}}  \|h_p\|_{L^2}\sum_{Q<p'\leq p-2} \lambda_{p'} \|b^{(1)}_{p'}\|_{L^r}\\
\lesssim &  \sum_{q>Q} \lambda_q^{2s+1+\frac{n}{r}} \|h_q\|^{2}_{L^2} \sum_{Q<p'\leq q} \lambda_{p'} \|b^{(1)}_{p'}\|_{L^r}\\
\lesssim&\ c_r\mu \sum_{q>Q}\lambda_q^{2s+1+\frac{n}{r}} \|h_q\|_{L^2}^2\sum_{Q<p'\leq q} \lambda_{p'}^{1-\frac{n}{r}-\delta}\, \lambda_{Q}^{\delta}\\
\lesssim &\ c_r\mu \sum_{q>Q}\lambda_q^{2s+2} \|h_q\|_{L^2}^2\sum_{Q<p'\leq q} \lambda_{p'-q}^{1-\frac{n}{r}}\, \, \lambda_{Q-p'}^{\delta} \lesssim \ c_r\mu \sum_{q\geq -1}\lambda_q^{2s+2} \|h_q\|_{L^2}^2,
\end{split}
\end{equation}
where we used $\delta\geq 0$, and further we made use of Jensen's inequality for $1-\frac{n}{r}>0$. 
\begin{equation}\notag
\begin{split}
|J_{23}|\lesssim &\sum_{q>Q} \lambda_q^{2s+2}\|h_q\|_{L^2} \|h_q\|_{L^{\frac{2r}{r-2}}} \|b^{(1)}_q\|_{L^r}\\
\lesssim & \sum_{q>Q} \lambda_q^{2s+2+\frac{n}{r}} \|h_q\|^2_{L^2}\|b^{(1)}_q\|_{L^r}\\
\lesssim& \ c_r\mu  \sum_{q>Q} \lambda_q^{2s+2+\frac{n}{r}} \|h_q\|^2_{L^2}\, \lambda_{q}^{-\frac{n}{r}-\delta} \, \lambda_{Q}^{\delta}\\
\lesssim& \ c_r\mu  \sum_{q>Q} \lambda_q^{2s+2} \|h_q\|^2_{L^2}\, \, \lambda_{Q-q}^{\delta}\lesssim \ c_r\mu   \sum_{q\geq -1}\lambda_q^{2s+2}\|h_q\|_{L^2}^2.
\end{split}
\end{equation}
Therefore, $J_2$ is estimated for $\delta\geq 0$, and $n<r$.   

To estimate $J_3$, we split off the cutoff-frequency contribution $p=Q$. Since $h_{\leq Q}=0$, we have $\widetilde h_Q=h_{Q+1}$. Therefore
\begin{equation}\notag
\begin{split}
J_3=&\sum_{Q<q\leq Q+2} \lambda_q^{2s} \int_{\mathbb T^n} \Delta_q\left((\nabla\times \widetilde h_Q)\times b^{(1)}_Q\right)\cdot \nabla\times h_q\, d\vec x\\
&+\sum_{q>Q}\sum_{p\geq \max\{Q+1,q-2\}} \lambda_q^{2s} \int_{\mathbb T^n} \Delta_q\left((\nabla\times \widetilde h_p)\times b^{(1)}_p\right)\cdot \nabla\times h_q\, d\vec x\\
=:&J_{31}+J_{32}.
\end{split}
\end{equation}
\begin{equation}\notag
\begin{split}
|J_{31}|\lesssim& \sum_{Q<q\leq Q+2}\lambda_q^{2s}\|b^{(1)}_Q\|_{L^\infty}\|\nabla\times h_{Q+1}\|_{L^2}\|\nabla\times h_q\|_{L^2}\\
\lesssim&\ c_r\mu\sum_{Q<q\leq Q+2}\lambda_q^{2s}\lambda_{Q+1}\|h_{Q+1}\|_{L^2}\lambda_q\|h_q\|_{L^2}\\
\lesssim&\ c_r\mu\sum_{Q<q\leq Q+2}\lambda_q^{2s+2}\left(\|h_{Q+1}\|_{L^2}^2+\|h_q\|_{L^2}^2\right)\\
\lesssim&\ c_r\mu\sum_{q\geq -1}\lambda_q^{2s+2}\|h_q\|_{L^2}^2,
\end{split}
\end{equation}
where we used (\ref{low-Q-block}) and $\lambda_q\sim \lambda_Q$ for $Q<q\leq Q+2$. For the strictly high-frequency part, the defining property of $\Lambda(t)$ applies to every $p\geq \max\{Q+1,q-2\}$, and thus
\begin{equation}\notag
\begin{split}
|J_{32}|\leq&\sum_{q>Q}\sum_{p\geq \max\{Q+1,q-2\}} \lambda_q^{2s} \left|\int_{\mathbb T^n} \Delta_q\left((\nabla\times \widetilde h_p)\times b^{(1)}_p\right)\cdot \nabla\times h_q\, d\vec x\right|\\
\lesssim&\sum_{q>Q}\sum_{p\geq \max\{Q+1,q-2\}} \lambda_q^{2s} \|\nabla\times \widetilde h_p\|_{L^2}\|b^{(1)}_p\|_{L^r} \|\nabla\times h_q\|_{L^{\frac{2r}{r-2}}}\\
\lesssim&\ c_r\mu\sum_{q>Q}\lambda_q^{2s+1+\frac{n}{r}} \|h_q\|_{L^2}\sum_{p\geq \max\{Q+1,q-2\}}\lambda_p^{1-\frac{n}{r}-\delta}\lambda_Q^\delta \|\widetilde h_p\|_{L^2}\\
\lesssim&\ c_r\mu\sum_{q>Q}\lambda_q^{s+1} \|h_q\|_{L^2}\sum_{p\geq \max\{Q+1,q-2\}}\lambda_p^{s+1}\|\widetilde h_p\|_{L^2}\lambda_{q-p}^{s+\frac{n}{r}}\lambda_{Q-p}^{\delta}\\
\lesssim&\ c_r\mu\sum_{q\geq -1}\lambda_q^{2s+2} \|h_q\|_{L^2}^2,
\end{split}
\end{equation}
since $s+\frac{n}{r}>0$, $\delta\geq 0$, and $\widetilde h_p$ has finite overlap.

Similarly to estimate $K$, we start by decomposing it via the Bony's paraproduct 
\begin{equation}\notag
\begin{split}
K=&\sum_{q\geq -1} \sum_{|p-q|\leq 2} \lambda_q^{2s} \int_{\mathbb T^n} \Delta_q\left((\nabla\times b^{(2)}_{\leq p-2})\times h_p\right)\cdot \nabla\times h_q\, d\vec x\\
&+\sum_{q\geq -1}\sum_{|p-q|\leq 2} \lambda_q^{2s} \int_{\mathbb T^n} \Delta_q\left((\nabla\times b^{(2)}_p)\times h_{\leq p-2}\right)\cdot \nabla\times h_q\, d\vec x\\
&+\sum_{q\geq -1} \sum_{p\geq q-2}\lambda_q^{2s} \int_{\mathbb T^n} \Delta_q\left((\nabla\times \widetilde b^{(2)}_p)\times h_p\right)\cdot \nabla\times h_q\, d\vec x\\
=&: K_1+K_2+K_3. 
\end{split}
\end{equation}
Since $h_{\leq Q}\equiv 0$, we split $K_1$ according to 
\begin{equation}\notag
\begin{split}
K_1=& \sum_{q>Q}\sum_{\substack {|p-q|\leq 2\\ p>Q+2}}\lambda_q^{2s} \int_{\mathbb T^n} \Delta_q\left((\nabla\times b^{(2)}_{(Q, p-2]}\times h_p\right)\cdot \nabla\times h_q\, d\vec x\\
& +\sum_{q>Q}\sum_{\substack {|p-q|\leq 2\\ p>Q+2}}\lambda_q^{2s} \int_{\mathbb T^n} \Delta_q\left((\nabla\times b^{(2)}_{\leq Q}\times h_p\right)\cdot \nabla\times h_q\, d\vec x\\
=&: K_{11}+K_{12}.
\end{split}
\end{equation}
Applying H\"older's inequality, Bernstein's inequality and (\ref{high-modes}) gives
\begin{equation}\notag
\begin{split}
|K_{11}|\lesssim& \sum_{q>Q} \sum_{\substack {|p-q|\leq 2\\ p>Q+2}}\lambda_q^{2s}\|h_p\|_{L^2} \|\nabla\times h_q\|_{L^{\frac{2r}{r-2}}}
\sum_{Q<p'\leq p-2} \|\nabla\times b^{(2)}_{p'}\|_{L^r}\\
\lesssim&\ c_r\mu \sum_{q>Q} \sum_{\substack {|p-q|\leq 2\\ p>Q+2}}\lambda_q^{2s+1+\frac{n}{r}}\|h_p\|_{L^2} \|h_q\|_{L^2}
\sum_{Q<p'\leq p-2}\lambda_{p'}^{1-\frac{n}{r}-\delta} \lambda_{Q}^{\delta} \\
\lesssim &\ c_r\mu \sum_{q>Q}\lambda_q^{2s+1+\frac{n}{r}} \|h_q\|_{L^2}^2\sum_{Q<p'\leq q}\lambda_{p'}^{1-\frac{n}{r}-\delta} \lambda_{Q}^{\delta}\\
\lesssim &\ c_r\mu \sum_{q>Q}\lambda_q^{2s+2} \|h_q\|_{L^2}^2\sum_{Q<p'\leq q}\lambda_{p'-q}^{1-\frac{n}{r}}\,\, \lambda_{p'-Q}^{-\delta}\lesssim \ c_r\mu \sum_{q\geq -1}\lambda_q^{2s+2} \|h_q\|_{L^2}^2
\end{split}
\end{equation}
where we used $1-\frac{n}{r}>0$ and $\delta\geq 0$ in the last step. Similarly we have
\begin{equation}\notag
\begin{split}
|K_{12}|\lesssim& \sum_{q>Q} \sum_{\substack {|p-q|\leq 2\\ p>Q+2}}\lambda_q^{2s} \|\nabla\times b^{(2)}_{\leq Q}\|_{L^\infty} \|h_p\|_{L^2}\|\nabla\times h_q\|_{L^2}\\
\lesssim&\ c_r\mu \sum_{q>Q} \sum_{\substack {|p-q|\leq 2\\ p>Q+2}}\lambda_q^{2s} \lambda_Q \|h_p\|_{L^2}\|\nabla\times h_q\|_{L^2}\\
\lesssim&\ c_r\mu \sum_{q>Q} \sum_{\substack {|p-q|\leq 2\\ p>Q+2}}\lambda_q^{2s+2}\, \lambda_{q-Q}^{-1}\, \|h_p\|_{L^2}\| h_q\|_{L^2}\lesssim \ c_r\mu \sum_{q\geq -1}\lambda_q^{2s+2} \|h_q\|_{L^2}^2.
\end{split}
\end{equation}
where we used the fact that $\lambda_{q-Q}^{-1}<1$ for $q>Q$.
The term $K_2$ is handled in a similar way,
\begin{equation}\notag
\begin{split}
|K_2|\leq & \sum_{q>Q} \sum_{\substack {|p-q|\leq 2\\ p>Q+2}}\lambda_q^{2s} \int_{\mathbb T^n} \left|\Delta_q\left((\nabla\times b^{(2)}_p)\times h_{(Q, p-2]}\right)\cdot \nabla\times h_q\right|\, d\vec x\\
\leq & \sum_{q>Q} \sum_{\substack {|p-q|\leq 2\\ p>Q+2}}\lambda_q^{2s}\|\nabla\times b^{(2)}_p\|_{L^r}\|h_{(Q,p-2]}\|_{L^{\frac{2r}{r-2}}}\|\nabla\times h_q\|_{L^2}\\
%\leq & \sum_{q>Q} \sum_{\substack {|p-q|\leq 2\\ p>Q+2}}\lambda_q^{2s+1}\, \lambda_{p}\, \|  b^{(2)}_p\|_{L^r}\, \| h_q\|_{L^2} \,\|h_{(Q,p-2]}\|_{L^{\frac{2r}{r-2}}} \\
\leq &\ c_r\mu  \sum_{q>Q} \sum_{\substack {|p-q|\leq 2\\ p>Q+2}}\lambda_q^{2s+1}\lambda_p^{1-\frac{n}{r}-\delta} \, \lambda_{Q}^{\delta} \, \|h_q\|_{L^2}\|h_{(Q,p-2]}\|_{L^{\frac{2r}{r-2}}}\\
\leq &\ c_r\mu  \sum_{q>Q}\lambda_q^{2s+2-\frac{n}{r} -\delta} \, \lambda_{Q}^{\delta}\,\|h_q\|_{L^2}\sum_{Q<p'\leq q}\lambda_{p'}^{\frac{n}{r}}\|h_{p'}\|_{L^2}\\
\leq &\ c_r\mu  \sum_{q>Q}\lambda_q^{s+1}\|h_q\|_{L^2}\,  \lambda_{Q-q}^{\delta} \, \sum_{Q<p'\leq q}\lambda_{p'}^{s+1}\|h_{p'}\|_{L^2} \lambda_{q-p'}^{s+1-\frac{n}{r}},
\end{split}
\end{equation}
where we use the fact that $\lambda_{Q-q}^{\delta} \leq 1$ for $\delta\geq 0$. In addition, using Jensen's inequality for $s+1-\frac{n}{r}<0$ yields
\begin{equation}\notag
\begin{split}
|K_2| \leq& c_r\mu  \sum_{q>Q}\lambda_q^{2s+2}\|h_q\|_{L^2}^2+c_r\mu \sum_{q>Q} \sum_{Q<p'\leq q}\lambda_{p'}^{2s+2}\|h_{p'}\|_{L^2}^2 \lambda_{q-p'}^{s+1-\frac{n}{r}}\\
\leq& c_r\mu  \sum_{q\geq -1}\lambda_q^{2s+2}\|h_q\|_{L^2}^2.
\end{split}
\end{equation}
At the end, we also split $K_3$ into the cutoff-frequency contribution $p=Q+1$ and the strictly high-frequency remainder:
\begin{equation}\notag
\begin{split}
K_3=&\sum_{Q<q\leq Q+3}\lambda_q^{2s} \int_{\mathbb T^n} \Delta_q\left((\nabla\times \widetilde b^{(2)}_{Q+1})\times h_{Q+1}\right)\cdot \nabla\times h_q\, d\vec x\\
&+\sum_{q>Q}\sum_{p\geq \max\{Q+2,q-2\}}\lambda_q^{2s} \int_{\mathbb T^n} \Delta_q\left((\nabla\times \widetilde b^{(2)}_p)\times h_p\right)\cdot \nabla\times h_q\, d\vec x\\
=:&K_{31}+K_{32}.
\end{split}
\end{equation}
For $K_{31}$, using (\ref{high-modes}) and Bernstein's inequality we obtain
\begin{equation}\notag
\begin{split}
\|\nabla\times \widetilde b^{(2)}_{Q+1}\|_{L^\infty}
\lesssim&\ \|\nabla b^{(2)}_{\leq Q}\|_{L^\infty}+\lambda_{Q+1}^{1+\frac{n}{r}}\|b^{(2)}_{Q+1}\|_{L^r}+\lambda_{Q+2}^{1+\frac{n}{r}}\|b^{(2)}_{Q+2}\|_{L^r}\\
\lesssim&\ c_r\mu \lambda_Q.
\end{split}
\end{equation}
Hence
\begin{equation}\notag
\begin{split}
|K_{31}|\lesssim& \sum_{Q<q\leq Q+3}\lambda_q^{2s}\|\nabla\times \widetilde b^{(2)}_{Q+1}\|_{L^\infty}\|h_{Q+1}\|_{L^2}\|\nabla\times h_q\|_{L^2}\\
\lesssim&\ c_r\mu\sum_{Q<q\leq Q+3}\lambda_q^{2s}\lambda_Q\|h_{Q+1}\|_{L^2}\lambda_q\|h_q\|_{L^2}\\
\lesssim&\ c_r\mu\sum_{Q<q\leq Q+3}\lambda_q^{2s+2}\left(\|h_{Q+1}\|_{L^2}^2+\|h_q\|_{L^2}^2\right)\\
\lesssim&\ c_r\mu\sum_{q\geq -1}\lambda_q^{2s+2}\|h_q\|_{L^2}^2.
\end{split}
\end{equation}
For $K_{32}$, note that for every $p\geq Q+2$ all dyadic blocks entering $\widetilde b^{(2)}_p$ have index strictly larger than $Q$, so the determining-wavenumber bound applies to each of them. Therefore
\begin{equation}\notag
\begin{split}
|K_{32}|\lesssim&\sum_{q>Q}\sum_{p\geq \max\{Q+2,q-2\}}\lambda_q^{2s}\|\nabla\times \widetilde b^{(2)}_p\|_{L^r}\|h_p\|_{L^2}\|\nabla\times h_q\|_{L^{\frac{2r}{r-2}}}\\
\lesssim&\ c_r\mu\sum_{q>Q}\lambda_q^{2s+1+\frac{n}{r}}\|h_q\|_{L^2}\sum_{p\geq \max\{Q+2,q-2\}}\lambda_p^{1-\frac{n}{r}-\delta}\lambda_Q^\delta\|h_p\|_{L^2}\\
\lesssim&\ c_r\mu\sum_{q>Q}\lambda_q^{s+1}\|h_q\|_{L^2}\sum_{p\geq \max\{Q+2,q-2\}}\lambda_p^{s+1}\|h_p\|_{L^2}\lambda_{q-p}^{s+\frac{n}{r}}\lambda_{Q-p}^{\delta}\\
\lesssim&\ c_r\mu\sum_{q\geq -1}\lambda_q^{2s+2}\|h_q\|_{L^2}^2,
\end{split}
\end{equation}
since $s+\frac{n}{r}>0$ and $\delta\geq 0$. 

Taken together the estimates of $J$ and $K$ with (\ref{est-energy1}), we conclude that for a small enough constant $c_r$
\begin{equation}\notag
\begin{split}
\frac{d}{dt} \sum_{q\geq -1} \lambda_q^{2s}\|h_q\|_{L^2(\mathbb T^n)}^2+\mu \sum_{q\geq -1} \lambda_q^{2s+2}\|h_q\|_{L^2(\mathbb T^n)}^2\leq 0.
\end{split}
\end{equation}
which implies that
\[\lim_{t\to\infty} \|h(t)\|_{H^s}=0, \ \ \ \mbox{for} \ \ -\frac{n}{r}<s<\frac{n}{r}-1 \, \, \textit{ and }\, \quad \delta\geq 0.\]
It completes the proof of Theorem \ref{thm-det}.

\bigskip

\subsection{Proof of Theorem \ref{thm-Hall-det}}

To prove Theorem \ref{thm-Hall-det}, let
\[
w(t):=u(t)-v(t),
\qquad
h(t):=b^{(1)}(t)-b^{(2)}(t).
\]
Then $(w,h)$ solves
\begin{equation}\label{Hall-diff-w}
w_t+(w\cdot\nabla)u +(v\cdot\nabla)w-(h\cdot\nabla)b^{(1)}-(b^{(2)}\cdot\nabla)h+\nabla P=\nu\Delta w,
\end{equation}
\begin{equation}\label{Hall-diff-h}
\begin{split}
h_t+(u\cdot\nabla)h +(w\cdot\nabla)b^{(2)}-(h\cdot\nabla)u-(b^{(2)}\cdot\nabla)w \hspace{1.8cm}\\
+\nabla\times((\nabla\times h)\times b^{(1)})
+\nabla\times((\nabla\times b^{(2)})\times h)=\mu\Delta h,
\end{split}
\end{equation}
with $\nabla\cdot w=\nabla\cdot h=0$. The determining-mode hypothesis gives
\begin{equation}\label{Hall-low-vanishing}
w(t)\vert_{\leq Q_U(t)}=0,
\qquad
h(t)\vert_{\leq Q_B(t)}=0.
\end{equation}
Moreover, by the definitions of $\Lambda_U$ and $\Lambda_B$,
\begin{equation}\label{Hall-high-modes}
\begin{split}
(L\lambda_{q-Q_U})^\sigma \lambda_q^{-1}\|u_q\|_{L^\infty}+
(L\lambda_{q-Q_U})^\sigma \lambda_q^{-1}\|v_q\|_{L^\infty}
&\lesssim c_r\nu,\qquad q>Q_U,\\
(L\lambda_{q-Q_B})^\delta \lambda_q^{\frac{n}{r}}\|b_q^{(1)}\|_{L^r}+
(L\lambda_{q-Q_B})^\delta \lambda_q^{\frac{n}{r}}\|b_q^{(2)}\|_{L^r}
&\lesssim c_r\mu,\qquad q>Q_B,
\end{split}
\end{equation}
and
\begin{equation}\label{Hall-low-modes}
\|\nabla u_{\leq Q_U}\|_{L^\infty}+\|\nabla v_{\leq Q_U}\|_{L^\infty}
\lesssim c_r\nu \lambda_{Q_U}^2,
\qquad
\|\nabla b^{(1)}_{\leq Q_B}\|_{L^\infty}+\|\nabla b^{(2)}_{\leq Q_B}\|_{L^\infty}
\lesssim c_r\mu \lambda_{Q_B}.
\end{equation}

Applying $\Delta_q$ to (\ref{Hall-diff-w}) and (\ref{Hall-diff-h}), testing with
$\lambda_q^{2s}w_q$ and $\lambda_q^{2s}h_q$, integrating over $\mathbb{T}^n$, and summing in
$q\geq -1$, we obtain
\begin{equation}\notag
\begin{split}
\frac{1}{2}\, \frac{d}{dt}\, \sum_{q\geq -1}&{\lambda_{q}^{2s} || w_{q}||^{2}_{L^{2}}}
+ \frac{1}{2}\, \frac{d}{dt}\, \sum_{q\geq -1}{\lambda_{q}^{2s} || h_{q}||^{2}_{L^{2}}}
+\nu\, \sum_{q\geq -1}{\lambda_{q}^{2s+2} || w_{q}||^{2}_{L^{2}}}
+\mu\,\sum_{q\geq -1}{\lambda_{q}^{2s+2} || h_{q}||^{2}_{L^{2}}}\\ \notag
&=-\sum_{q\geq -1}{\lambda^{2s}_{q}} \int_{\mathbb{T}^n}{\Delta_{q}((w\cdot \nabla)u)\cdot w_{q}dx}
-\sum_{q\geq -1}{\lambda^{2s}_{q}} \int_{\mathbb{T}^n}{\Delta_{q}((v\cdot \nabla)w)\cdot w_{q}dx}\\ \notag
&+\sum_{q\geq -1}{\lambda^{2s}_{q}} \int_{\mathbb{T}^n}{\Delta_{q}((h\cdot \nabla)b^{(1)})\cdot w_{q}dx}
+\sum_{q\geq -1}{\lambda^{2s}_{q}} \int_{\mathbb{T}^n}{\Delta_{q}((b^{(2)}\cdot \nabla)h)\cdot w_{q}dx}\\ \notag
&-\sum_{q\geq -1}{\lambda^{2s}_{q}} \int_{\mathbb{T}^n}{\Delta_{q}((u\cdot \nabla)h)\cdot h_{q}dx}
-\sum_{q\geq -1}{\lambda^{2s}_{q}} \int_{\mathbb{T}^n}{\Delta_{q}((w\cdot \nabla)b^{(2)})\cdot h_{q}dx}\\ \notag
&+\sum_{q\geq -1}{\lambda^{2s}_{q}} \int_{\mathbb{T}^n}{\Delta_{q}((h\cdot \nabla)u)\cdot h_{q}dx}
+\sum_{q\geq -1}{\lambda^{2s}_{q}} \int_{\mathbb{T}^n}{\Delta_{q}((b^{(2)}\cdot \nabla)w)\cdot h_{q}dx}\\ \notag
&-\sum_{q\geq -1}{\lambda^{2s}_{q}} \int_{\mathbb{T}^n}{\Delta_q\big(\nabla\times((\nabla\times h)\times b^{(1)})\big)\cdot h_q\,dx}\\ \notag
&-\sum_{q\geq -1}{\lambda^{2s}_{q}} \int_{\mathbb{T}^n}{\Delta_q\big(\nabla\times((\nabla\times b^{(2)})\times h)\big)\cdot h_q\,dx}\\
&=:I_1+I_2+I_3+I_4+I_5+I_6+I_7+I_8+J+K.
\end{split}
\end{equation}

The terms $I_1$ and $I_2$ are estimated exactly as in Ref. \cite{1}, while the Hall terms
$J$ and $K$ are the same magnetic terms already treated in Subsection 4.1 with $Q=Q_B$.
Therefore it remains to estimate $I_3,\dots,I_8$.

\subsubsection{Estimate for $I_3$:}

Using Bony's paraproduct we can write $I_3$ according to 

\begin{equation}
\begin{split}
&\quad I_{3}= \sum_{q\geq -1}{\lambda^{2s}_{q}} \int_{\mathbb{T}^n}{\Delta_{q}((h\cdot \nabla)b^{(1)})\cdot w_{q}dx}\\
&=\sum_{q\geq -1}\sum_{\vert p-q\vert\leq 2}{\lambda^{2s}_{q}} \int_{\mathbb{T}^n}{\Delta_{q}( h_{\leq p-2}\cdot \nabla b^{(1)}_p) \cdot w_{q}dx}\\ \notag
&+ \sum_{q\geq -1}\sum_{\vert p-q\vert\leq 2}{\lambda^{2s}_{q}} \int_{\mathbb{T}^n}{\Delta_{q}( h_{p}\cdot \nabla b^{(1)}_{\leq p-2}) \cdot w_{q}dx}\\ \notag
&+ \sum_{q\geq -1}\sum_{ p\geq q-2}{\lambda^{2s}_{q}} \int_{\mathbb{T}^n}{\Delta_{q}( \tilde{h}_{p}\cdot \nabla b^{(1)}_p) \cdot w_{q}dx}=I_{31}+I_{32}+I_{33} 
\end{split}
\end{equation}

Note that based on our assumption, we have $h\vert_{\leq Q_B}=0$. We note that $I_{31}$ only consists of higher modes and hence 
\begin{equation}\notag
\begin{split}
|I_{31}|\leq & \sum_{q\geq -1}\sum_{\vert p-q\vert\leq 2}{\lambda^{2s}_{q}}{\left|  \int_{\mathbb{T}^n}{ \Delta_{q}( h_{\leq p-2}\cdot \nabla b^{(1)}_p) \cdot w_{q}dx} \right| }\\
&\leq \sum_{q>Q_B}\sum_{\substack{\vert p-q\vert\leq 2\\ p>Q_{B}+2}}{\lambda^{2s}_{q} \, \lambda_{q} \, ||b_{p}^{(1)}||_{r}\, ||w_q||_{\frac{2r}{r-2}}} \sum_{Q_B< p^{\prime}\leq p-2}{||h_{p^{\prime}}||_2}  
\end{split}
\end{equation}
Using definition of $\Lambda_B$, it follows that
\begin{equation}\notag
\begin{split}
|I_{31}| &\lesssim c_r \mu \sum_{q>Q_B}\sum_{\substack{\vert p-q\vert\leq 2\\ p>Q_B+2}}{\lambda^{2s+1}_{q} \, \lambda^{-\delta-\frac{n}{r}}_{p} \, \Lambda_{B}^{\delta} \, \lambda_{q}^{\frac{n}{r}}\,||w_q||_{2}} \sum_{Q_B< p^{\prime}\leq p-2}{||h_{p^{\prime}}||_2} \\ 
&\lesssim c_r \mu \sum_{q>Q_B}{\lambda^{2s+1-\delta}_{q} \, \lambda^{\delta}_{Q_B} \,||w_q||_{2}} \sum_{Q_B< p^{\prime}\leq q}{||h_{p^{\prime}}||_2}\\ 
%&\lesssim c_r \mu \sum_{q>Q_h}{\lambda^{s+1}_{q} \,||w_q||_{2}} \sum_{Q_h< p^{\prime}\leq q}{ \lambda_{p^{\prime}}^{s+1} \, ||h_{p^{\prime}}||_2 \, \lambda_{q}^{s-\delta}\, \lambda^{\delta}_{Q_{h}} \,\lambda_{p^{\prime}}^{-s-1}\, }\\ 
&\lesssim c_r \mu \sum_{q>Q_B}{\lambda^{s+1}_{q} \,||w_q||_{2}} \sum_{Q_B< p^{\prime}\leq q}{ \lambda_{p^{\prime}}^{s+1}\, ||h_{p^{\prime}}||_2 \, \lambda_{q-p^{\prime}}^{s-\delta}\, \lambda_{Q_B-p^{\prime}}^{\delta}\, \lambda_{p^{\prime}}^{-1} }\\ 
&\lesssim c_r \mu \sum_{q>Q_B}{\lambda^{s+1}_{q} \,||w_q||_{2}} \sum_{Q_B< p^{\prime}\leq q}{ \lambda_{p^{\prime}}^{s+1}\, ||h_{p^{\prime}}||_2 \, \lambda_{q-p^{\prime}}^{s-\delta} }
\end{split}
\end{equation}
where in last step we assumed $\delta\geq 0$. For $s-\delta<0$, and we further use Young and Jensen's inequalities,

\begin{equation}\notag
\begin{split}
|I_{31}|&\lesssim c_r \mu \sum_{q>Q_B}{\lambda^{2s+2}_{q} \,||w_q||^{2}_{2}} + c_r \mu\,  \sum_{q>Q_B} \big( \sum_{Q_B< p^{\prime}\leq q}{ \lambda_{p^{\prime}}^{s+1}\, ||h_{p^{\prime}}||_2 \, \lambda_{q-p^{\prime}}^{s-\delta}} \big)^2\\ \notag
&\lesssim c_r \mu\, \sum_{q>Q_B}{\lambda^{2s+2}_{q} \,||w_q||^{2}_{2}} + c_r \mu\, \sum_{q>Q_B}{\lambda^{2s+2}_{q} \,||h_q||^{2}_{2}}\\ \notag
&\lesssim c_r \mu\, \sum_{q\geq -1}{\lambda^{2s+2}_{q} \,||w_q||^{2}_{2}} + c_r \mu\, \sum_{q\geq -1}{\lambda^{2s+2}_{q} \,||h_q||^{2}_{2}}
\end{split}
\end{equation}
To estimate $I_{32}$, observe that $h\vert_{\leq Q_B}=0$. Furthermore, by utilizing the wavenumber $Q_B$, we can split $I_{32}$ into two terms, and using the fact that $(Q_B,p-2]$ is empty for $p-2\leq Q_B$, it follows that

\begin{equation}\notag
\begin{split}
   I_{32}&=\sum_{q\geq -1}\sum_{\vert p-q\vert\leq 2}{\lambda^{2s}_{q}} \int_{\mathbb{T}^n}{\Delta_{q}( h_{p}\cdot \nabla b^{(1)}_{\leq p-2}) \cdot w_{q}dx}\\ 
    &=\sum_{p>Q_B}\sum_{\vert p-q\vert\leq 2}{\lambda^{2s}_{q}} \int_{\mathbb{T}^n}{\Delta_{q}( h_{p}\cdot \nabla b^{(1)}_{(Q_B,p-2] }) \cdot w_{q}dx}\\ 
    &+\sum_{p>Q_B}\sum_{\vert p-q\vert\leq 2}{\lambda^{2s}_{q}} \int_{\mathbb{T}^n}{\Delta_{q}( h_{p}\cdot \nabla b^{(1)}_{\leq Q_B}) \cdot w_{q}dx}=I_{321}+I_{322} 
\end{split}
\end{equation}
Now we first estimate $I_{321}$ according to 

\begin{equation}\notag
\begin{split}
   |I_{321}| &\leq  \sum_{p>Q_B}\sum_{\substack{\vert p-q\vert\leq 2\\ q>Q_B+2}}{\lambda^{2s}_{q}\, ||w_p||_{2} \, ||h_q||_{\frac{2r}{r-2}} } \sum_{Q_B<p^{\prime}\leq p-2}{|| \nabla b^{(1)}_{p^{\prime}}||_{r}} \\
      &\lesssim  \sum_{p>Q_B}{\lambda^{2s}_{p}\, ||w_p||_{2} \, \lambda_{p}^{\frac{n}{r}}\, ||h_p||_{2} } \sum_{Q_B<p^{\prime}\leq p-2}{ \lambda_{p^{\prime}}\, || b^{(1)}_{p^{\prime}}||_{r}}\\
      &\lesssim c_r\, \mu  \sum_{p>Q_B} {\lambda^{2s}_{p}\, ||w_p||_{2} \, \lambda_{p}^{\frac{n}{r}}\, ||h_p||_{2} } \sum_{Q_B<p^{\prime}\leq p-2}{ \lambda_{p^{\prime}}\cdot \lambda_{p^{\prime}}^{-\delta-\frac{n}{r}}\, \Lambda_B^{\delta}} \\ 
     % &\lesssim c_r\, \mu  \sum_{q>Q_h}{\lambda^{2s+2}_{q}\, ||w_q||_{2} \,||h_q||_{2} } \sum_{Q_h<p^{\prime}\leq q}{ \lambda_{p^{\prime}}^{1-\delta-\frac{n}{r}}\, \lambda_{q}^{-2+\frac{n}{r}} \, \lambda_{Q_h}^{\delta} } \\
      &\lesssim c_r\, \mu  \sum_{p>Q_B} {\lambda^{s+1}_{p}\, ||w_p||_{2} \, \lambda^{s+1}_{p}\,||h_p||_{2} } \sum_{Q_B<p^{\prime}\leq p-2}{ \big( \lambda_{p-p^{\prime}}\big)^{-2+\frac{n}{r}}\, \lambda_{Q_B-p^{\prime}}^{\delta} \, \lambda_{p^{\prime}}^{-1} } 
\end{split}
\end{equation}
Next, we consider $-2+\frac{n}{r}<0$ which in turn implies $r>\frac{n}{2}$, and since $\delta\geq 0$ by our previous assumption, it follows that 

\begin{equation}\notag
\begin{split}
   |I_{321}|   &%\lesssim c_r\, \mu  {\lambda^{s+1}_{q}\, ||w_q||_{2} \, \lambda^{s+1}_{q}\,||h_q||_{2} } \\ 
   \lesssim c_r \mu\, \sum_{p>Q_B}{\lambda^{2s+2}_{q} \,||w_p||^{2}_{2}} + c_r \mu\, \sum_{p>Q_B}{\lambda^{2s+2}_{p} \,||h_p||^{2}_{2}}\\
&\lesssim c_r \mu\, \sum_{p\geq -1}{\lambda^{2s+2}_{p} \,||w_p||^{2}_{2}} + c_r \mu\, \sum_{p\geq -1}{\lambda^{2s+2}_{p} \,||h_p||^{2}_{2}}
\end{split}
\end{equation}

Now we estimate $I_{322}$ according to
\begin{equation}\notag
\begin{split}
   |I_{322}|&\leq \sum_{p>Q_B}\sum_{\substack{\vert p-q\vert\leq 2\\ q>Q_B+2}}{\lambda^{2s}_{q}} \int_{\mathbb{T}^n}{ | \Delta_{q}( h_{p}\cdot \nabla b^{(1)}_{\leq Q_B}) \cdot w_{q} | dx} \\
   &\lesssim  \sum_{p>Q_B}\sum_{\substack{\vert p-q\vert\leq 2\\ q>Q_B+2}} {\lambda^{2s}_{q} \, ||h_p||_{2}\,||w_q||_{2}\, ||\nabla b^{(1)}_{\leq Q_B}||_{\infty}}\\ 
   &\lesssim c_r \mu \sum_{p>Q_B}\sum_{\substack{\vert p-q\vert\leq 2\\ q>Q_B+2}} {\lambda^{2s}_{q} \, \Lambda_B\, ||h_p||_{2}\,||w_q||_{2}\, } \\
   &\lesssim c_r \mu \sum_{p>Q_B} {\lambda^{2s+2}_{p}\, ||h_p||_{2}\,||w_p||_{2} \, \lambda_{p}^{-2}\, \lambda_{Q_B}}
\end{split}
\end{equation}
where we used the definition of wavenumber. Since for $p>Q_B$, $\lambda_{p}^{-2}\, \lambda_{Q_B}<1$, we have that
\begin{equation} \notag
\begin{split}
   |I_{322}|&\leq  %c_r \mu \sum_{q>Q_h}{\lambda^{2s+2}_{q}\, ||h_q||_{2}\,||w_q||_{2} } \hspace{2.6cm}\\ \notag
   \lesssim c_r \mu \sum_{p>Q_B} {\lambda^{2s+2}_{p}\, ||w_p||_{2}^{2} } +  c_r \mu \sum_{p>Q_B} {\lambda^{2s+2}_{p}\, ||h_p||_{2}^{2} } \\ \notag
    &\lesssim c_r \mu \sum_{p\geq -1} {\lambda^{2s+2}_{p}\, ||w_p||_{2}^{2} } +  c_r \mu \sum_{p\geq -1} {\lambda^{2s+2}_{p}\, ||h_p||_{2}^{2} } 
\end{split}
\end{equation}
Finally we focus on the term $I_{33}$: 

\begin{equation} \notag
\begin{split}
    I_{33}&= \sum_{q\geq -1}\sum_{ p\geq q-2}{\lambda^{2s}_{q}} \int_{\mathbb{T}^n}{\Delta_{q}( \tilde{h}_{p}\cdot \nabla b^{(1)}_p) \cdot w_{q}dx}\\ \notag
    &= \sum_{p\leq Q_B}\sum_{ q\leq p+2}{\lambda^{2s}_{q}} \int_{\mathbb{T}^n}{\Delta_{q}( \tilde{h}_{p}\cdot \nabla b^{(1)}_p) \cdot w_{q}dx} \\
&\quad    +\sum_{p>Q_B}\sum_{ q\leq p+2}{\lambda^{2s}_{q}} \int_{\mathbb{T}^n}{\Delta_{q}( \tilde{h}_{p}\cdot \nabla b^{(1)}_p) \cdot w_{q}dx}\\
&=I_{331}+I_{332}
    \end{split}
\end{equation}
We note that we have only few low frequency terms, i.e, 

\begin{align}\notag
  I_{331}= \sum_{p\leq Q_B}\sum_{ q\leq p+2}{\lambda^{2s}_{q}} \int_{\mathbb{T}^n}{\Delta_{q}( \tilde{h}_{p}\cdot \nabla b^{(1)}_p) \cdot w_{q}dx} =  \sum_{ q\leq Q_B+2}{\lambda^{2s}_{q}} \int_{\mathbb{T}^n}{\Delta_{q}( h_{Q_B+1}\cdot \nabla b^{(1)}_{Q_B}) \cdot w_{q}dx}
\end{align}
This is due to the fact that $h\vert_{\leq Q_B}=0$, and $\tilde{h}_p=h_{p-1}+h_{p}+h_{p+1}$. Now estimating it yields 
\begin{equation}\notag
\begin{split}
  | I_{331} | & %\sum_{ q\leq Q+2}{\lambda^{2s}_{q}} \int_{\mathbb{T}^n}{| \Delta_{q}( h_{Q+1}\cdot \nabla b^{(1)}_Q) \cdot w_{q} | dx} 
   \leq \sum_{ -1\leq q\leq Q_B+2}{{\lambda^{2s}_{q}} ||\nabla b^{(1)}_{Q_B}||_{\infty}\, || h_{Q_B+1}||_{2}\, || w_{q}||_{2}}\\
   &\leq \sum_{ -1\leq q\leq Q_B+2}{{\lambda^{2s}_{q}} \,  ||\nabla b^{(1)}_{\leq Q_B}||_{\infty}\, || h_{Q_B+1}||_{2}\, || w_{q}||_{2}}\\ 
 &\lesssim c_r \mu \, \sum_{  q\geq -1}{\lambda^{s+1}_{q} || h_{q}||_{2}\, \lambda^{s+1}_{q} || w_{q}||_{2} \,\, \lambda_{Q_B}\, \lambda_{q}^{-2}} 
 \end{split}
\end{equation}
where $\lambda_{Q_B}\, \lambda_{q}^{-2}<1$ for $q\geq Q_B$. Thus, 
\begin{equation}\notag
\begin{split}
  | I_{331} | &\lesssim c_r \mu \, \sum_{  q\geq -1}{\lambda^{2s+2}_{q} || h_{q}||_{2}\, || w_{q}||_{2} } \\
  &\lesssim  c_r \mu \, \sum_{  q\geq -1}{\lambda^{2s+2}_{q} || h_{q}||^{2}_{2} } + c_r \mu \, \sum_{  q\geq -1}{ \lambda^{2s+2}_{q} || w_{q}||^{2}_{2} }.
\end{split}
\end{equation}
Finally we estimate $I_{332}$ according to

\begin{equation}\notag
\begin{split}
  | I_{332 }|&= \sum_{p> Q_B}\sum_{ q\leq p+2}{\lambda^{2s}_{q}} \int_{\mathbb{T}^n}{| \Delta_{q}( \tilde{h}_{p}\cdot \nabla b^{(1)}_p) \cdot w_{q} | dx} \\
  &\leq \sum_{p> Q_B} { ||\nabla b^{(1)}_p||_{r}\, || \tilde{h}_{p}||_{\frac{2r}{r-2}} }  \sum_{  q\leq p+2}{\lambda^{2s}_{q} || w_{q}||_{2}}   \\ 
 &\lesssim \sum_{  p>Q_B}{ \, \lambda_{p} || b^{(1)}_{p}||_{r}\, \lambda_{p}^{\frac{n}{r}}\, || \tilde{h}_{p}||_{2} }  \sum_{  q\leq p+2}{\lambda^{2s}_{q} || w_{q}||_{2}}\\
 & \lesssim c_r \mu \, \sum_{  p>Q_B}{ \lambda_{p}^{s+1} \, || \tilde{h}_{p}||_{2}\, \lambda_{p}^{-s-\delta}\, \Lambda^{\delta}_B }  \sum_{  q\leq p+2}{\lambda^{2s}_{q} || w_{q}||_{2}}\\ 
 &\lesssim c_r \mu \, \sum_{  p>Q_B}{ \lambda_{p}^{s+1} \, || \tilde{h}_{p}||_{2} }  \sum_{  q\leq p+2}{\lambda^{s+1}_{q} || w_{q}||_{2} \, \lambda_{q-p}^{s+\delta} \, \lambda_{q-Q_B}^{-\delta-1}\, \lambda^{-1}_{Q_B}}
\end{split}
\end{equation}
Assuming $\delta>-1$ observe that $\lambda_{q-Q_B}^{-\delta-1}<1$ for $q>Q_B$, and for $s>-\delta$ using Jensen’s inequality yields
\begin{equation}\notag
\begin{split}
| I_{332} | & \lesssim c_r \mu \, \sum_{  p>Q_B}{ \lambda_{p}^{s+1} \, || \tilde{h}_{p}||_{2} }  \sum_{  q\leq p+2}{\lambda^{s+1}_{q} || w_{q}||_{2} \, \lambda_{q-p}^{s+\delta} } \\ 
 &\lesssim c_r \mu \, \sum_{  p>Q_B}{ \lambda_{p}^{2s+2} \, || \tilde{h}_{p}||^{2}_{2} } + c_r \mu \, \sum_{  p>Q_B} \Big( \sum_{  q\leq p+2}{\lambda^{s+1}_{q} || w_{q}||_{2} \, \lambda_{q-p}^{s+\delta} } \Big)^2\\
&\lesssim  c_r \mu \, \sum_{  p>Q_B}{ \lambda_{p}^{2s+2} \, || \tilde{h}_{p}||^{2}_{2} } + c_r \mu \, \sum_{  q\geq -1} {\lambda^{2s+2}_{q} || w_{q}||^{2}_{2}  }\\ 
 &\lesssim c_r \mu \, \sum_{  p\geq -1}{ \lambda_{p}^{2s+2} \, || \tilde{h}_{p}||^{2}_{2} } + c_r \mu \, \sum_{  q\geq -1} {\lambda^{2s+2}_{q} || w_{q}||^{2}_{2}  }
 \end{split}
\end{equation}
By combining all estimates above, we conclude that
\begin{equation}\label{I_3}
 | I_{3} |\lesssim    c_r \mu \, \sum_{  q\geq -1}{ \lambda_{q}^{2s+2} \, || h_{q}||^{2}_{2} } + c_r \mu \, \sum_{  q\geq -1} {\lambda^{2s+2}_{q} || w_{q}||^{2}_{2}  } 
\end{equation}

\vspace{0.2cm}

\subsubsection{Estimate for $I_{4}$}

We start estimating $I_{4}$. First, we use Bony's paraproduct to write $I_4$ as
\begin{equation} \notag
\begin{split}
I_{4}&= \sum_{q\geq -1}{\lambda^{2s}_{q}} \int_{\mathbb{T}^n}{\Delta_{q}(b^{(2)}\cdot \nabla h )\cdot w_{q}dx}\\
&=\sum_{q\geq -1}\sum_{\vert p-q\vert\leq 2}{\lambda^{2s}_{q}} \int_{\mathbb{T}^n}{\Delta_{q}( b^{(2)}_{\leq p-2}\cdot \nabla h_{p}) \cdot w_{q}dx}\\
&\quad+ \sum_{q\geq -1}\sum_{\vert p-q\vert\leq 2}{\lambda^{2s}_{q}} \int_{\mathbb{T}^n}{\Delta_{q}( b^{(2)}_{p}\cdot \nabla h_{\leq p-2}) \cdot w_{q}dx}\\ 
&\quad+ \sum_{q\geq -1}\sum_{ p\geq q-2}{\lambda^{2s}_{q}} \int_{\mathbb{T}^n}{\Delta_{q}( \tilde{b}^{(2)}_{p}\cdot \nabla h_p) \cdot w_{q}dx}\\
&= I_{41}+I_{42}+I_{43} 
\end{split}
\end{equation}

Next, we use commutator to further decompose $I_{41}$, 

\begin{equation}\notag
\begin{split}
 I_{41}&=\sum_{q\geq -1}\sum_{\vert p-q\vert\leq 2}{\lambda^{2s}_{q}} \int_{\mathbb{T}^n}{ \Delta_{q}( b^{(2)}_{\leq p-2}\cdot \nabla h_{p}) \cdot w_{q} dx}  \\ 
&= \sum_{q\geq -1}\sum_{\vert p-q\vert\leq 2}{\lambda^{2s}_{q}} \int_{\mathbb{T}^n}{[ \Delta_{q}\, , b^{(2)}_{\leq p-2}\cdot \nabla ] h_{p} \cdot w_{q}dx}\\
&\quad + \sum_{q\geq -1}\sum_{\vert p-q\vert\leq 2}{\lambda^{2s}_{q}} \int_{\mathbb{T}^n}{ b^{(2)}_{\leq q-2}\cdot \nabla \Delta_{q} ( h_{p}) \,w_{q}dx}  \\ 
&\quad+ \sum_{q\geq -1}\sum_{\vert p-q\vert\leq 2}{\lambda^{2s}_{q}} \int_{\mathbb{T}^n}{ \big(b^{(2)}_{\leq p-2} - b^{(2)}_{\leq q-2})\cdot \nabla \Delta_{q} ( h_{p}) \,w_{q}dx}\\
& =I_{411}+I_{412}+I_{413} 
\end{split}
\end{equation}
To simplify second term, we use $\sum_{\vert p-q\vert\leq 2}{\Delta_q (h_p)}=h_q$, and it follows that
\begin{equation}\notag
I_{412}= \sum_{q\geq -1}\sum_{\vert p-q\vert\leq 2}{\lambda^{2s}_{q}} \int_{\mathbb{T}^n}{ b^{(2)}_{\,\leq q-2}\cdot \nabla h_{q} \,w_{q}dx}=0 
\end{equation}
since $\div\,b^{(2)}_{\leq q-2}=0$.
Next, we focus on estimating $I_{411}$. Since $h\vert_{\leq Q_B}=0$, we can split $I_{411}$ using  the wavenumber 
\begin{equation}\notag
\begin{split}
 I_{411} &= \sum_{q> Q_B}\sum_{\substack{\vert p-q\vert\leq 2\\ p>Q_B+2}}{\lambda^{2s}_{q}} \int_{\mathbb{T}^n}{ [ \Delta_{q}\, , b^{(2)}_{\leq p-2}\cdot \nabla ] h_{p} \cdot w_{q} dx} \\
 &=  \sum_{q> Q_B }\sum_{\substack{\vert p-q\vert\leq 2\\ p>Q_B+2}}{\lambda^{2s}_{q}} \int_{\mathbb{T}^n}{[ \Delta_{q}\, , b^{(2)}_{\leq Q_B}\cdot \nabla ] h_{p} \cdot w_{q}dx} \\
&+ \sum_{q>Q_B}\sum_{\substack{\vert p-q\vert\leq 2\\ p>Q_B+2}}{\lambda^{2s}_{q}} \int_{\mathbb{T}^n}{[ \Delta_{q}\, , b^{(2)}_{(Q_B , p-2]}\cdot \nabla ] h_{p} \cdot w_{q}dx} =I_{4411}+I_{4412}
\end{split}
\end{equation}

\begin{equation}\notag
\begin{split}
| I_{4411} | &=\sum_{q>Q_B} \sum_{\substack{\vert p-q\vert\leq 2\\ p>Q_B+2}}{\lambda^{2s}_{q}} \int_{\mathbb{T}^n}{|\, [ \Delta_{q}\, , b^{(2)}_{\leq Q_B}\cdot \nabla ] h_{p} \cdot w_{q}\, | \, dx} \\
&\leq  \sum_{q>Q_B} \sum_{\substack{\vert p-q\vert\leq 2\\ p>Q_B+2}} { \lambda^{2s}_{q}\, || \nabla b^{(2)}_{\leq Q_B} ||_{\infty} \, || h_{p}||_{2} \, ||w_{q}||_{2} } \lesssim
 c_r\, \mu\,\sum_{q> Q_B}{\lambda^{2s+1}_{q}\,\Lambda_B \, || h_{q}||_{2}  ||w_{q}||_{2} } \\ 
&\lesssim c_{r}\, \mu\,  \sum_{q> Q_B}{ \lambda^{2s+2}_{q} \, || h_{q}||_{2} \, ||w_{q}||_{2} \, \lambda_{Q_B}\, \lambda^{-1}_{q}} 
\lesssim c_{r}\, \mu\,  \sum_{q\geq -1}{ \lambda^{2s+2}_{q} \, || h_{q}||_{2} \, ||w_{q}||_{2} } \\ 
&\lesssim c_{r}\, \mu\,  \sum_{q\geq -1}{ \lambda^{2s+2}_{q} \, || h_{q}||^{2}_{2}}
+ c_{r}\, \mu\,  \sum_{q\geq -1}{ \lambda^{2s+2}_{q} \, ||w_{q}||^{2}_{2} }
\end{split}
\end{equation}
where we used the fact that $\lambda_{Q_B}\, \lambda^{-1}_{q}<1$ for $q>Q_B$. We continue estimating $I_{4412}$, 
\begin{equation}\notag
\begin{split}
| I_{4412} |  &\leq   \sum_{q>Q_B}\sum_{\substack{\vert p-q\vert\leq 2\\ p>Q_B+2}}{\lambda^{2s}_{q}} \int_{\mathbb{T}^n}{ |\, [ \Delta_{q}\, , b^{(2)}_{(Q_B , p-2]}\cdot \nabla ] h_{p} \cdot w_{q} |\,dx} \\
& \leq \sum_{q> Q_B} \sum_{\substack{\vert p-q\vert\leq 2\\ p>Q_B+2}}  {  \lambda^{2s}_{q}\, \, ||h_p||_{2} \,||w_{q}||_{\frac{2r}{r-2}}} \, \sum_{Q_B<p^{\prime}\leq p-2} {|| \nabla b^{(2)}_{p^{\prime}} ||_{r}  }  \\ 
&\lesssim  c_{r}\, \mu\,   \sum_{q> Q_B}{ \lambda^{2s}_{q}\, ||h_q||_{2} \, \lambda^{\frac{n}{r}}_{q}\, ||w_{q}||_{2}} \, \sum_{Q_B<p^{\prime}\leq q} {\lambda^{1-\frac{n}{r}-\delta}_{p^{\prime}} \, \Lambda_{B}^{\delta} } \\ 
& \lesssim  c_{r}\, \mu\,   \sum_{q> Q_B}{ \lambda_{q}^{2s+2}\,||h_q||_{2}\, ||w_{q}||_{2} } \, \sum_{Q_B<p^{\prime}\leq q} {\lambda^{-2+\frac{n}{r} }_{q-p^{\prime}} \, \lambda^{\delta+1}_{Q_B-p^{\prime}}\,\lambda_{Q_B}^{-1} } 
\end{split}
\end{equation}
As before, assuming $\delta>-1$ and $r>\frac{n}{2}$, it follows that 
\begin{equation}\notag
\begin{split}
| I_{4412} |  & \lesssim  c_{r}\, \mu\, \sum_{q> Q_B}{ \lambda_{q}^{2s+2}\,||h_q||_{2}\, ||w_{q}||_{2} } \\ 
&\lesssim  c_{r}\, \mu\, \sum_{q\geq -1}{ \lambda_{q}^{2s+2}\,||h_q||^{2}_{2} }+ c_{r}\, \mu\, \sum_{q\geq -1}{ \lambda_{q}^{2s+2}\,||w_{q}||^{2}_{2}} 
\end{split}
\end{equation}

Next we focus on estimating $I_{413}$, 

\begin{equation}\notag
\begin{split}
 | I_{413} | &= \sum_{q> Q_B}\sum_{\substack{\vert p-q\vert\leq 2\\ p>Q_B}}{\lambda^{2s}_{q}} \int_{\mathbb{T}^n}{|\, \big(b^{(2)}_{\leq p-2} - b^{(2)}_{\leq q-2})\cdot \nabla \Delta_{q} ( h_{p}) \,w_{q}\, |\, dx}  \\ 
 &\lesssim \sum_{q> Q_B}{ \lambda_{q}^{2s+1}\, ||b^{(2)}_{(q-4\,,\,q]}\,||_{\infty} \, ||h_{q}||_{2} \, ||w_{q}||_{2}} \\ 
 & \lesssim \sum_{q> Q_B} { \lambda_{q}^{2s+1}\, ||b^{(2)}_{(q-4\,,\,Q_B]}\,||_{\infty} \, ||h_{q}||_{2} \, ||w_{q}||_{2}}\\
&\quad  +  \sum_{q> Q_B}  \sum_{\substack{q-4<p^{\prime}\leq q\\ p^{\prime}>Q_B}}{ \lambda_{q}^{2s+1}\, ||b^{(2)}_{p^{\prime}}\,||_{\infty} \, ||h_{q}||_{2} \, ||w_{q}||_{2}} \\
  &=I_{4131}+I_{4132}
\end{split}
\end{equation}
As we noted before, we use the convention that $(q-4,Q_B]$ is empty if $q-4\geq Q_B$, therefore 
 \begin{equation}\notag
  \begin{split}
  I_{4131}\lesssim c_r\, \mu\, \sum_{q> Q_B} { \lambda_{q}^{2s+1}\, ||b_{q}||_{2} \, ||w_{q}||_{2}} \lesssim c_{r}\, \mu\, \sum_{q\geq -1}{ \lambda_{q}^{2s+2}\,||h_q||^{2}_{2} }+ c_{r}\, \mu\, \sum_{q\geq -1}{ \lambda_{q}^{2s+2}\,||w_{q}||^{2}_{2}}  
  \end{split}
\end{equation}
and using definition of $\Lambda_B$, Bernstein's and Young's inequalities, we have
  \begin{equation} \notag
  \begin{split}
 I_{4132} &\lesssim c_{r}\, \mu\,  \sum_{q> Q_B}   \sum_{\substack{q-4<p^{\prime}\leq q\\ p^{\prime}>Q_B}}{ \lambda_{q}^{2s+1}\, \lambda_{p^{\prime}}^{-\delta}\,  \Lambda_B^{\delta} \, ||h_{q}||_{2} \, ||w_{q}||_{2}} \\ 
 &\lesssim  c_{r}\, \mu\,   \sum_{q> Q_B} {\lambda_{q}^{2s+2}\, ||h_{q}||_{2} \, ||w_{q}||_{2} }\, \sum_{\substack{q-4<p^{\prime}\leq q\\ p^{\prime}>Q_B}}{ \, \lambda_{p^{\prime}}^{-\delta}\, \lambda_{q}^{-1}\, \Lambda_B^{\delta}} \\ 
  &\lesssim c_{r}\, \mu\,   \sum_{q> Q_B} {\lambda_{q}^{2s+2}\, ||h_{q}||_{2} \, ||w_{q}||_{2} }\, \sum_{\substack{q-4<p^{\prime}\leq q\\ p^{\prime}>Q_B}}{ \, \lambda_{q-p^{\prime}}^{-1}\, (\lambda_{Q_B-p^{\prime}})^{1+\delta}\, \lambda_{Q_B}^{-1}} \\
  &\lesssim  c_{r}\, \mu\,   \sum_{q> Q_B} {\lambda_{q}^{2s+2}\, ||h_{q}||_{2} \, ||w_{q}||_{2} } \lesssim  c_{r}\, \mu\,   \sum_{q\geq -1} {\lambda_{q}^{2s+2}\, ||h_{q}||^2_{2} } \\
  &\quad+ \, c_{r}\, \mu\,   \sum_{q\geq -1} {\lambda_{q}^{2s+2}\, ||w_{q}||^2_{2} }\, 
\end{split}
\end{equation}
where we used $\delta>-1$.

We continue to estimate $I_{42}$. Since $h\vert_{\leq Q_B}=0$, we have

\begin{equation}\notag
\begin{split}
  | I_{42} | &\leq \sum_{q>Q_B}\sum_{\substack{\vert p-q\vert\leq 2\\ p>Q_B+2}}{\lambda^{2s}_{q}} \int_{\mathbb{T}^n}{ |\,\Delta_{q}( b^{(2)}_{p}\cdot \nabla h_{(Q\, , \, p-2\,]}\,) \cdot w_{q} |\, dx}\\ 
&\leq \sum_{q>Q_B}\sum_{\substack{\vert p-q\vert\leq 2\\ p>Q_B+2}}{\lambda^{2s}_{q}\, || b^{(2)}_{p}||_{r}\, ||w_{q}||_{\frac{2r}{r-2}}\, ||\nabla h_{(Q\, , \, p-2\,]}\,) ||_{2}} \\
&\lesssim \sum_{q>Q_B}{ \lambda^{ 2s+\frac{n}{r}}_{q} \,|| b^{(2)}_{q}||_{r} \,||w_{q}||_{2}\,} \sum_{Q_B<p^{\prime}\leq q}{\lambda_{p^{\prime}}\, ||h_{p^{\prime}} ||_{2}  }\\ 
    &\lesssim c_{r}\, \mu\,  \sum_{q>Q_B}{ \, \lambda^{s+1}_{q}\,\,||w_{q}||_{2}\, } \sum_{Q_B<p^{\prime}\leq q}{  \lambda^{s+1}_{p^{\prime}}\, ||h_{p^{\prime}} ||_{2} \, \, \lambda^{s-\delta-1}_{q}\, \lambda^{-s}_{p^{\prime}}\, \Lambda_{B}^{\delta}  } \\ 
     & \lesssim c_{r}\, \mu\,  \sum_{q>Q_B}{ \, \lambda^{s+1}_{q}\,\,||w_{q}||_{2}\, } \sum_{Q_B<p^{\prime}\leq q}{  \lambda^{s+1}_{p^{\prime}}\, ||h_{p^{\prime}} ||_{2} \, \, \lambda^{s-\delta-1}_{q-p^{\prime}}\, (\lambda_{Q_B-p^{\prime}})^{1+\delta}\, \lambda_{Q_B}^{-1}  } 
\end{split}
\end{equation}
Assuming $\delta>-1$ and using Jensen and young inequalities, it follows that 
\begin{equation}\notag
\begin{split}
   | I_{42} | & \lesssim c_{r}\, \mu\,  \sum_{q>Q_B}{ \, \lambda^{s+1}_{q}\,\,||w_{q}||_{2}\, } \sum_{Q_B<p^{\prime}\leq q}{  \lambda^{s+1}_{p^{\prime}}\, ||h_{p^{\prime}} ||_{2} \, \, \lambda^{s-\delta-1}_{q-p^{\prime}}\, } \\
   & \lesssim c_{r}\, \mu\,  \sum_{q>Q_B}{ \, \lambda^{2s+2}_{q}\,\,||w_{q}||^{2}_{2}\, }+  c_{r}\, \mu\, \sum_{q>Q_B}{\Big(  \sum_{Q_B<p^{\prime}\leq q}{  \lambda^{s+1}_{p^{\prime}}\, ||h_{p^{\prime}} ||_{2} \, \, \lambda^{s-\delta-1}_{q-p^{\prime}}\, }  \Big)^2}  \\ 
   & \lesssim c_{r}\, \mu\,  \sum_{q>Q_B}{ \, \lambda^{2s+2}_{q}\,\,||w_{q}||^{2}_{2}\, }+  c_{r}\, \mu\, \sum_{q>Q_B}{ \, \lambda^{2s+2}_{q}\,\,||h_{q}||^{2}_{2}\, } \\ 
   & \lesssim c_{r}\, \mu\,  \sum_{q\geq -1}{ \, \lambda^{2s+2}_{q}\,\,||w_{q}||^{2}_{2}\, }+  c_{r}\, \mu\, \sum_{q\geq -1}{ \, \lambda^{2s+2}_{q}\,\,||h_{q}||^{2}_{2}\, } 
\end{split}
\end{equation}
where we used $s<1+\delta$. 

Finally, we estimate of $I_{43}$ according to
\begin{equation}\notag
\begin{split}
    | I_{43}| &\leq \sum_{q\geq -1}\sum_{ p\geq q-2}{\lambda^{2s}_{q}}  \, \int_{\mathbb{T}^n}{|\,\Delta_{q}( \tilde{b}^{(2)}_{p}\cdot \nabla h_p) \cdot w_{q}\, |\, dx} \\ 
    &\leq \sum_{p>Q_B}\sum_{ q\leq p+2}{\lambda^{2s}_{q}\, || \tilde{b}^{(2)}_{p}||_{r}\, ||\nabla h_{p}||_{\frac{2r}{r-2}}\, ||w_{q} ||_{2}} \\
    &\lesssim \sum_{p>Q_B}{\lambda^{2s}_{q}\, || \tilde{b}^{(2)}_{p}||_{r}\,\lambda^{\frac{n}{r}+1}_{p}\, || h_{p}||_{2}}\sum_{ q\leq p+2}{\, ||w_{q} ||_{2}} \\
    &\lesssim c_{r}\, \mu \, \sum_{p>Q_B}{ \lambda^{s+1}_{p}\,  || h_{p}||_{2}\,  \lambda^{-s-\delta}_{p}\, \Lambda^{\delta}_{B}\, } \sum_{ Q_B<q\leq p+2}{ \lambda^{s+1}_{q}\, ||w_{q} ||_{2} \, \, \lambda^{s-1}_{q}\, } \\ 
    &\lesssim c_{r}\, \mu \, \sum_{p>Q_B}{ \lambda^{s+1}_{p}\,  || h_{p}||_{2}}\sum_{ Q_B<q\leq p+2}{ \lambda^{s+1}_{q}\, ||w_{q} ||_{2} \, \, \lambda^{s+\delta}_{q-p}\, (\lambda_{Q_B-q})^{1+\delta}\, \lambda_{Q_B}^{-1}}
\end{split}
\end{equation}
where assuming $\delta>-1$ as before, gives
\begin{equation}\notag
\begin{split}
    | I_{43}| &\lesssim c_{r}\, \mu \, \sum_{p>Q_B}{ \lambda^{s+1}_{p}\,  || h_{p}||_{2}}\sum_{ Q_B<q\leq p+2}{ \lambda^{s+1}_{q}\, ||w_{q} ||_{2} \, \, \lambda^{s+\delta}_{q-p}} \\ 
    &\lesssim c_{r}\, \mu \, \sum_{p>Q_B}{ \lambda^{2s+2}_{p}\,  || h_{p}||^{2}_{2}} + c_{r}\, \mu \,\sum_{p>Q_B} \Big( \sum_{ Q_B<q\leq p+2}{ \lambda^{s+1}_{q}\, ||w_{q} ||_{2} \, \, \lambda^{s+\delta}_{q-p}} \Big)^2 \\ 
    &\lesssim c_{r}\, \mu \, \sum_{p>Q_B}{ \lambda^{2s+2}_{p}\,  || h_{p}||^{2}_{2}} + c_{r}\, \mu \,\sum_{p>Q_B} { \lambda^{2s+2}_{p}\, ||w_{p} ||^{2}_{2}} \\ 
    &\lesssim c_{r}\, \mu \, \sum_{p\geq -1}{ \lambda^{2s+2}_{p}\,  || h_{p}||^{2}_{2}} + c_{r}\, \mu \,\sum_{p\geq -1} { \lambda^{2s+2}_{p}\, ||w_{p} ||^{2}_{2}} 
\end{split}
\end{equation}
where we used $s>-\delta$, and Jensen and young inequalities.

Consequently, taken together all the estimates, we arrive at 
\begin{equation}\label{I_4}
I_4\lesssim  c_{r}\, \mu \, \sum_{q\geq -1}{ \lambda^{2s+2}_{q}\,  || w_{q}||^{2}_{2}} + c_{r}\, \mu \,\sum_{q\geq -1} { \lambda^{2s+2}_{q}\, ||h_{q} ||^{2}_{2}} 
\end{equation}
\vspace{0.5cm}

\subsubsection{ Estimate for $I_{5}$}

Similarly, we can decompose $I_{5}$ using Bony's paraproduct 
\begin{equation}\notag
\begin{split}
I_{5} &= -\sum_{q\geq -1}{\lambda^{2s}_{q}} \int_{\mathbb{T}^3}{\Delta_{q}(u\cdot \nabla h)\cdot h_{q}dx}\\
=&-\sum_{q\geq -1}\sum_{\vert p-q\vert\leq 2}{\lambda^{2s}_{q}} \int_{\mathbb{T}^n}{\Delta_{q}( u_{\leq p-2}\cdot \nabla h_p) \cdot h_{q}dx} \\ 
&\quad- \sum_{q\geq -1}\sum_{\vert p-q\vert\leq 2}{\lambda^{2s}_{q}} \int_{\mathbb{T}^n}{\Delta_{q}( u_{p}\cdot \nabla h_{\leq p-2}) \cdot h_{q}dx} \\ 
&\quad- \sum_{q\geq -1}\sum_{ p\geq q-2}{\lambda^{2s}_{q}} \int_{\mathbb{T}^n}{\Delta_{q}( \tilde{u}_{p}\cdot \nabla h_p) \cdot h_{q}dx}\\
&=I_{51}+I_{52}+I_{53} \
\end{split}
\end{equation}
We use commutator to further decompose $I_{51}$ according to 
\begin{equation}\notag
\begin{split}
I_{51} =& -\sum_{q\geq -1}\sum_{\vert p-q\vert\leq 2}{\lambda^{2s}_{q}} \int_{\mathbb{T}^n}{[\,\Delta_{q}\, , \,  u_{\leq p-2}\cdot \nabla \, ] \, h_p \, h_{q}dx} \\ 
&\quad-\sum_{q\geq -1}\sum_{\vert p-q\vert\leq 2}{\lambda^{2s}_{q}} \int_{\mathbb{T}^n}{u_{\leq q-2}\cdot \nabla\Delta_{q} h_p\, h_{q}dx} \\ 
&\quad-\sum_{q\geq -1}\sum_{\vert p-q\vert\leq 2}{\lambda^{2s}_{q}} \int_{\mathbb{T}^n}{(u_{\leq p-2}-u_{\leq q-2})\cdot \Delta_{q} h_{p}\, h_{q} dx}\\
&= I_{511}+I_{512}+I_{513} 
\end{split}
\end{equation}
Note that $I_{512}$ vanishes since $\nabla\cdot u_{\leq q-2}=0$. Next, we continue to estimate $I_{511}$. Utilizing the wavenumber $Q_{U}(t)$, we may first split $I_{511}$ into three terms and estimate each one separately,
\begin{equation}\notag
\begin{split}
| I_{511} |  &\leq \sum_{q\geq -1}\sum_{\vert p-q\vert\leq 2}{\lambda^{2s}_{q}} \int_{ \mathbb{T}^n}{ |\,[\Delta_{q}\, , \,  u_{\leq p-2}\cdot \nabla \, ] \, h_p \, h_{q} |\,dx} \\ 
&\leq \sum_{-1\leq p\leq Q_U+2}\sum_{\vert p-q\vert\leq 2}{\lambda^{2s}_{q}} \int_{\mathbb{T}^n}{|\, [\Delta_{q}\, , \,  u_{p-2}\cdot \nabla \, ] \, h_p \, h_{q} | \, dx} \\ 
&\quad+\sum_{ p> Q_U+2}\sum_{\vert p-q\vert\leq 2}{\lambda^{2s}_{q}} \int_{\mathbb{T}^n}{|\, [\,\Delta_{q}\, , \,  u_{\leq Q_U}\cdot \nabla \, ] \, h_p \, h_{q} |\,dx} \\ 
&\quad+\sum_{p>Q_U+2}\sum_{\vert p-q\vert\leq 2}{\lambda^{2s}_{q}} \int_{\mathbb{T}^n}{ |\, [\,\Delta_{q}\, , \,  u_{(\, Q_U\, , \, p-2\, ]}\cdot \nabla \, ] \, h_p \, h_{q}|\,dx}\\
&=I_{5111}+I_{5112}+I_{5113} 
\end{split}
\end{equation}

\begin{equation}\notag
\begin{split}
I_{5111} &\leq \sum_{-1\leq p\leq Q_U+2}\sum_{\vert p-q\vert \leq 2}{\lambda^{2s}_{q}\, ||h_q ||_{2}\, ||h_p ||_{2}\, || \nabla u_{\leq p-2}||_{\infty}\,}\\
&\leq \sum_{-1\leq p\leq Q_U+2} {\lambda^{2s}_{p}\, ||h_p ||_{2}^2\, ||\nabla u_{\leq Q_U+2}||_{\infty}\, }\\
&\lesssim  c_r\, \nu\sum_{-1\leq p\leq Q_U+2} {\lambda^{2s+2}_{p}\, ||h_p ||_{2}^2\,\,( \lambda_{Q_U+2-p})^2\,} 
\end{split}
\end{equation}
even though  $( \lambda_{Q_U+2-p})^2\geq 1$ for all $p\leq Q_U+2$, the last expression above still makes sense as we are summing over a finite number of modes. Since $( \lambda_{Q_U+2-p})^2$ is bounded by $\lambda^2_{Q_U+3}$, a positive constant number, we can absorbed it into $\lesssim$. Thus, 
\begin{equation}\notag
\begin{split}
I_{5111} \lesssim  c_r\, \nu\, \sum_{-1\leq p\leq Q_U+2} {\lambda^{2s+2}_{p}\, ||h_p ||_{2}^2\,} \lesssim c_r\, \nu\, \, \sum_{p\geq -1} {\lambda^{2s+2}_{p}\, ||h_p ||_{2}^2\,} 
\end{split}
\end{equation}
\begin{equation}\notag
\begin{split}
    I_{5112} & \leq \sum_{p>Q_U+2}\, \sum_{\vert p-q\vert\leq 2}{\lambda^{2s}_{q}\, ||h_q ||_{2}\, ||h_p ||_{2}\, || \nabla u_{\leq Q_U}||_{\infty}\,} \\
&\lesssim \sum_{q>Q_U}{\lambda^{2s+2}_{q}\, ||h_q ||_{2}\, ||h_q ||_{2} \,\, \lambda_{q}^{-2} || \nabla u_{\leq Q_U}||_{\infty}\,} \\ 
& \lesssim c_r\, \nu\, \lesssim \sum_{q>Q_U}{\lambda^{2s+2}_{q}\, ||h_q ||^{2}_{2}\, \lambda_{q}^{-2} \,\Lambda_U^2 } \lesssim c_{r}\, \mu \,  \sum_{q>Q_U}{\lambda^{2s+2}_{q}\, ||h_q ||^{2}_{2} \,\, \lambda_{q-Q_U}^{-2} } \\
&\lesssim c_{r}\, \mu \,  \sum_{q>Q_U}{\lambda^{2s+2}_{q}\, ||h_q ||^{2}_{2} \, } \lesssim c_{r}\, \mu \,  \sum_{q\geq -1}{\lambda^{2s+2}_{q}\, ||h_q ||^{2}_{2} } 
\end{split}
\end{equation}
We carry on with estimate of $ I_{5113}$ according to
\begin{equation}\notag
\begin{split}
    I_{5113} &\leq \sum_{p>Q_U+2}\sum_{\substack{\vert p-q\vert\leq 2\\  q>Q_U}}{\lambda^{2s}_{q}\, ||h_q ||_{2}\, ||h_p ||_{2}\, || \nabla u_{(\, Q_U\, , \, p-2\, ]} ||_{\infty}\,}\\
    &\lesssim  \sum_{p>Q_U+2}\sum_{\substack{\vert p-q\vert\leq 2\\  q>Q_U}} {\lambda^{2s}_{q}\, ||h_q ||_{2}\, ||h_p ||_{2}} \sum_{Q_U<p^{\prime}\leq p-2}{ \lambda_{p^{\prime}}\, || u_{p^{\prime}}||_{\infty}\,} \\
  &\lesssim  \sum_{q>Q_U} {\lambda^{2s}_{q}\, ||h_q ||^2_{2}} \sum_{Q_U<p^{\prime}\leq q}{ \lambda_{p^{\prime}}\, || u_{p^{\prime}}||_{\infty}\,} 
  \end{split}
\end{equation}
and using the definition of wavenumber again leads to
\begin{equation}\notag
\begin{split}
 I_{5113}&\lesssim  c_{r}\, \nu\, \sum_{q>Q_U} {\lambda^{2s}_{q}\, ||h_q ||^{2}_{2}\,} \sum_{Q_U<p^{\prime}\leq q}{ \lambda_{p^{\prime}}\, \Lambda^{1+\sigma}_U\, \lambda^{-\sigma}_{p^{\prime}}  \,}\\
%\lesssim c_{r}\, \nu\,  \sum_{q>Q} {\lambda^{2s+2}_{q}\, ||b_q ||^{2}_{2}\,} \,\sum_{Q<p^{\prime}\leq q}{ \lambda^{1-\sigma}_{p^{\prime}} \, \lambda_{Q}^{1+\sigma}\, \lambda^{-2}_{q}} 
&\lesssim c_{r}\, \nu\,  \sum_{q>Q_U} {\lambda^{2s+2}_{q}\, ||h_q ||^{2}_{2}\,} \,\sum_{Q_U<p^{\prime}\leq q}{ (\lambda_{Q_U-p^{\prime}})^{1+\sigma}\, \lambda^{-2}_{q-p^{\prime}}} \\
&\lesssim c_{r}\, \nu\,  \sum_{q>Q_U} {\lambda^{2s+2}_{q}\, ||h_q ||^{2}_{2}\,} \,\sum_{Q_U<p^{\prime}\leq q}{ (\lambda_{Q_U-p^{\prime}})^{1+\sigma}\, } \\
&\lesssim c_{r}\, \nu\,  \sum_{q>Q_U} {\lambda^{2s+2}_{q}\, ||h_q ||^{2}_{2}\,}  \lesssim \, c_{r}\, \nu\,  \sum_{q\geq -1} {\lambda^{2s+2}_{q}\, ||h_q ||^{2}_{2}\,} 
\end{split}
\end{equation}
where once more we used $1+\sigma \geq 0$ which implies that $\sigma \geq -1$. \\

Next, we continue estimating $I_{513}$ again by splitting it into high and low modes and estimate each one
independently, i.e,
\begin{equation}\notag
\begin{split}
    |I_{513}|&\leq \sum_{q\geq -1}\sum_{\vert p-q\vert\leq 2} {\lambda^{2s}_{q}} \int_{\mathbb{T}^n}{| (u_{\leq p-2}-u_{\leq q-2})\cdot \Delta_{q} h_{p}\, h_{q} |\, dx} \\
    &\leq \sum_{q\geq -1}\sum_{\vert p-q\vert\leq 2}{\lambda^{2s}_{q}\, ||u_{\leq p-2}-u_{\leq q-2}||_{\infty} \,    || \nabla h_p ||_{2}\, ||h_q ||_{2}\, }\\
    &\lesssim \sum_{q\geq -1}{\lambda^{2s+1}_{q}\,  ||u_{\, (q-4\, , \, q\, ] \, } ||_{\infty} \,||h_q ||^{2}_{2} \, } \lesssim \sum_{-1\leq q\leq Q_U}{\lambda^{2s+1}_{q}\, ||u_{\, (q-4\, , \, q\, ] \, } ||_{\infty} \, ||h_q ||^{2}_{2}\, }\\
    &\quad+ \sum_{q>Q_U}{\lambda^{2s+1}_{q}\, ||u_{\, (q-4\, , \, Q_U\, ] \, } ||_{\infty} \, ||h_q ||^{2}_{2}\, } + \sum_{q>Q_U} \sum_{\substack{q-4<p^{\prime}\leq q \\ p^{\prime}>Q_U}}{\lambda^{2s+1}_{q}\, ||u_{p^{\prime}}||_{\infty} \, ||h_q ||^{2}_{2} \, }\\
&    =I_{5131}+I_{5132}+I_{5133} 
\end{split}
\end{equation}
\begin{equation}\notag
\begin{split}
    I_{5131}&=\sum_{-1\leq q\leq Q_U}{\lambda^{2s+1}_{q}\, ||u_{\, (q-4\, , \, q\, ] \, } ||_{\infty} \, ||h_q ||^{2}_{2}\, } \lesssim \sum_{-1\leq q\leq Q_U}{\lambda^{2s+1}_{q}\, ||u_{\leq Q_U } ||_{\infty} \, ||h_q ||^{2}_{2}\, } \\
    & \lesssim c_r\, \nu\, \sum_{-1\leq q\leq Q_U}{\lambda^{2s+2}_{q}\, ||h_q ||^{2}_{2}\, \Lambda_U\, \lambda_q^{-1}} \lesssim c_r\, \nu\, \sum_{-1\leq q\leq Q_U}{\lambda^{2s+2}_{q}\, ||h_q ||^{2}_{2}\, \lambda_{Q_U-q}}   \\ 
&\lesssim  c_{r}\, \nu \, \sum_{-1\leq q\leq Q_U}{\lambda^{2s+2}_{q}\, ||h_q ||^{2}_{2}\, } \lesssim c_{r}\, \nu \, \sum_{q\geq -1}{\lambda^{2s+2}_{q}\, ||h_q ||^{2}_{2}\, }
\end{split}
\end{equation}
where we used a similar argument as the one in the estimation of $I_{5111}$. Indeed, since $\lambda_{Q_U-q}\leq \lambda_{Q_U+1}$, is bounded by a constant number, it can be absorbed to the constants in the estimate. 

As before, we can adopt the convention that $(q-4,Q_U]$ is empty if $q-4\geq Q_U$. It follows that 
\begin{equation}\notag
\begin{split}
    I_{5132}&= \sum_{q>Q_U}{\lambda^{2s+1}_{q}\, ||u_{\, (q-4\, , \, Q_U\, ] \, } ||_{\infty} \, ||h_q ||^{2}_{2}\,  } 
    \lesssim  \sum_{q>Q_U}{\lambda^{2s+1}_{q}\, ||u_{\leq Q } ||_{\infty} \, ||h_q ||^{2}_{2}\,  }  \\ 
    &\lesssim  c_{r}\, \nu \, \sum_{q>Q_U}{\lambda^{2s+2}_{q}\, ||h_q ||^{2}_{2}\, \, \Lambda_U \lambda^{-1}_{q} } 
      \lesssim  c_{r}\, \nu \, \sum_{q>Q_U}{\lambda^{2s+2}_{q}\, ||h_q ||^{2}_{2}\, \, (\lambda_{Q_U-q})^{-1} } \\
&\lesssim  c_{r}\, \nu \, \sum_{q>Q_U}{\lambda^{2s+2}_{q}\, ||h_q ||^{2}_{2}\, } \lesssim c_{r}\, \nu \, \sum_{q\geq -1}{\lambda^{2s+2}_{q}\, ||h_q ||^{2}_{2}\, }
\end{split}
\end{equation}
and estimating the third term yields 
\begin{equation}\notag
\begin{split}
    I_{5133} & \lesssim \sum_{q>Q_U}{\lambda^{2s+1}_{q}\, ||h_q ||^{2}_{2}\, } \,\sum_{\substack{q-4<p^{\prime}\leq q\\  p^{\prime}>Q_U}}{ || u_{p^{\prime}}||_{\infty}\,} \\ 
  &\lesssim  c_{r}\, \nu \sum_{q>Q_U} {\lambda^{2s+2}_{q}\, ||h_q ||^{2}_{2}\,} \sum_{\substack{q-4<p^{\prime}\leq q\\ p^{\prime}>Q_U}}{ \lambda^{-1}_{q}\, \Lambda_U^{1+\sigma}\, \lambda^{-\sigma}_{p^{\prime}}\, }\\
  &\lesssim  c_{r}\, \nu \, \sum_{q>Q_U} {\lambda^{2s+2}_{q}\, ||h_q ||^{2}_{2}\,} \sum_{\substack{q-4<p^{\prime}\leq q\\  p^{\prime}>Q_U}}{  (\lambda_{Q_U-p^{\prime}})^{1+\sigma} \lambda^{-1}_{q-p^{\prime}}\, } \\ 
&\lesssim  c_{r}\, \nu \, \sum_{q>Q_U} {\lambda^{2s+2}_{q}\, ||h_q ||^{2}_{2}\,} 
\lesssim  c_{r}\, \nu \, \sum_{q\geq -1} {\lambda^{2s+2}_{q}\, ||h_q ||^{2}_{2}\,} 
\end{split}
\end{equation}
where $\sigma\geq-1$. 
 
Next, we proceed by computing the estimate of $I_{52}$. First, let's start by separating the high and low frequencies,
\begin{equation}\notag
\begin{split}
 |I_{52}|& \leq \sum_{q\geq -1}\sum_{\vert p-q\vert\leq 2}{\lambda^{2s}_{q}} \int_{\mathbb{T}^n}{ |\, \Delta_{q}( u_{p}\cdot \nabla h_{\leq p-2}) \cdot h_{q} | \, dx} \\
 &\leq \sum_{-1\leq q\leq Q_U}\sum_{\vert p-q\vert\leq 2}{\lambda^{2s}_{q}} \int_{\mathbb{T}^n}{ |\, \Delta_{q}( u_{p}\cdot \nabla h_{\leq p-2}) \cdot h_{q} | \, dx} \\
 &\quad+  \sum_{q> Q_U}\sum_{\vert p-q\vert\leq 2}{\lambda^{2s}_{q}} \int_{\mathbb{T}^n}{ |\, \Delta_{q}( u_{p}\cdot \nabla h_{(Q_U\, , \, p-2]\,}) \cdot h_{q} | \, dx}\\
 &\quad+  \sum_{q> Q_U}\sum_{\vert p-q\vert\leq 2}{\lambda^{2s}_{q}} \int_{\mathbb{T}^n}{ |\, \Delta_{q}( u_{p}\cdot \nabla h_{\leq Q_U}) \cdot h_{q} | \, dx} \\
 &=I_{521}+I_{522}+I_{523}
\end{split}
\end{equation}
 %%****Above, note that the reason we seperated high modes into two terms I_522 and I_523 is that we can get better results, we'll show the results for s<\sigma+1. However, it is possible to estimate high modes without seperating them into two terms, then we can only show it for s<0 which is very restrictive. 
Since $I_{521}$ only consists of low modes, we can estimate it analogous to $I_{5111}$. Indeed we have,
\begin{equation}\notag
\begin{split}
I_{521} &\leq \sum_{-1\leq q \leq Q_U}\sum_{\vert p-q\vert\leq 2}{\lambda^{2s}_{q}\, ||u_{p}||_{\infty} \,    || \nabla h_{\leq p-2\,} ||_{2}\, ||h_q ||_{2}\, }\\
%& \lesssim \sum_{-1\leq q\leq Q_U}\sum_{\vert p-q\vert\leq 2}{\lambda^{2s}_{q}\, ||u_{p}||_{\infty} \, ||h_q ||_{2}\, } \sum_{-1\leq p^{\prime}\leq p-2} {\lambda_{p^{\prime}} \,|| h_{p^{\prime}} ||_{2} }
&\lesssim \sum_{-1\leq q\leq Q_U} {\lambda^{2s}_{q}\, ||u_{q}||_{\infty} \, ||h_q ||_{2}\, } \sum_{-1\leq p^{\prime}\leq q} {\lambda_{p^{\prime}} \,|| h_{p^{\prime}} ||_{2} }\\
&\leq \sum_{-1\leq q\leq Q_U} {\lambda^{s+1}_{q}\, ||u_{\leq Q_U}||_{\infty} \, ||h_q ||_{2}\, } \sum_{-1\leq p^{\prime}\leq q} {\lambda^{s+1}_{p^{\prime}} \,|| h_{p^{\prime}} ||_{2} \, \lambda^{s-1}_{q}\, \lambda^{-s}_{p^{\prime}}}\\
%&\lesssim c_r\, \nu \sum_{-1\leq q\leq Q_U} {\lambda^{s+1}_{q}\, ||h_q ||_{2}\, \lambda_{Q_U}} \sum_{-1\leq p^{\prime}\leq q} {\lambda^{s+1}_{p^{\prime}} \,|| h_{p^{\prime}} ||_{2} \, \lambda^{s-1}_{q-p^{\prime}}\, \lambda^{-1}_{p^{\prime}} }\\
&\lesssim c_r\, \nu \sum_{-1\leq q\leq Q_U} {\lambda^{s+1}_{q}\, ||h_q ||_{2}\, \lambda_{Q_U}} \sum_{-1\leq p^{\prime}\leq q} {\lambda^{s+1}_{p^{\prime}} \,|| h_{p^{\prime}} ||_{2} \, \lambda^{s-1}_{q-p^{\prime}}\, \lambda^{-1}_{p^{\prime}} }\\
\end{split}
\end{equation}
%%% ****** If we want, we could intentionally keep the range of s to be larger than s<1, i.e s<2), and this is always possible here thanks to the fact that we only sum over finite number of modes here.******* and for $s<1$, we further use Young and Jensen’s inequalities to get

and for $s<1$, we further use Young and Jensen’s inequalities to get
\begin{equation}\notag
\begin{split}
I_{521} %&\lesssim c_{r}\, \nu\,  \sum_{-1\leq q\leq Q_U}  {\lambda^{2s+2}_{q}\,  ||h_q ||^{2}_{2}\, \lambda^2_{Q_U}} +  c_{r}\, \nu\, \sum_{-1\leq q\leq Q_U} \Big( \sum_{-1<p^{\prime}\leq q} {\lambda^{s+1}_{p^{\prime}} \,|| h_{p^{\prime}} ||_{2} \, \big( \lambda_{q-p^{\prime}} \big)^{s-1} \, } \Big)^{2}\,\\ 
&\lesssim c_{r}\, \nu\,  \sum_{-1\leq q\leq Q_U}  {\lambda^{2s+2}_{q}\,  ||h_q ||^{2}_{2}\, \lambda^2_{Q_U}} \\
&\quad+  c_{r}\, \nu\, \sum_{-1\leq q\leq Q_U} \Big( \sum_{-1\leq p^{\prime}\leq q} {\lambda^{s+1}_{p^{\prime}} \,|| h_{p^{\prime}} ||_{2} \, \big( \lambda_{q-p^{\prime}} \big)^{s-1} \, } \Big)^{2}\,\\ 
&\lesssim c_{r}\, \nu\,  \sum_{-1\leq q\leq Q_U}  {\lambda^{2s+2}_{q}\,  ||h_q ||^{2}_{2}} +  c_{r}\, \nu\, \sum_{-1\leq q\leq Q_U} {\lambda^{2s+2}_{p^{\prime}} \,|| h_{p^{\prime}} ||^{2}_{2}} \sum_{-1\leq p^{\prime}\leq q}{  \lambda^{s-1}_{q-p^{\prime}}} \,\\ 
&\lesssim c_{r}\, \nu\,  \sum_{-1\leq q\leq Q_U}  {\lambda^{2s+2}_{q}\,  ||h_q ||^{2}_{2}\,} \lesssim c_{r}\, \nu\,  \sum_{q\geq -1}  {\lambda^{2s+2}_{q}\,  ||h_q ||^{2}_{2}}
\end{split}
\end{equation}
where we absorbed $\lambda^2_{Q_U}$ into constants.

Utilizing the definition of wavenumber $\Lambda_U$, and carrying on estimates, we have
\begin{equation}\notag
\begin{split}
  I_{522} &=  \sum_{q> Q_U}\sum_{\substack{\vert p-q\vert\leq 2\\ p>Q_U+2}}{\lambda^{2s}_{q}} \int_{\mathbb{T}^n}{ |\, \Delta_{q}( u_{p}\cdot \nabla h_{(Q\, , \, p-2]\,}) \cdot h_{q} | \, dx} \\  
  &\lesssim \sum_{q>Q_U}\sum_{\substack{\vert p-q\vert\leq 2\\ p>Q_U+2}}{\lambda^{2s}_{q}\, ||u_{p}||_{\infty} \,    || \nabla h_{(Q_U\, , \, p-2]\,} ||_{2}\, ||h_q ||_{2}\, }\\
 %\lesssim c_{r}\, \nu\,  \sum_{q>Q} \sum_{\vert p-q\vert\leq 2\, \, p>Q+2} {\lambda^{2s}_{q}\, \Lambda^{1+\sigma} \, \lambda^{-\sigma}_{p}\,   ||b_q ||_{2} \, || \nabla b_{(Q\, , \, p-2]\,} ||_{2} } \,\\ \notag
&\lesssim c_{r}\, \nu\,  \sum_{q>Q_U}\sum_{\substack{\vert p-q\vert\leq 2\\ p>Q_U+2}}{\lambda^{2s}_{q}\, \Lambda_{U}^{1+\sigma} \, \lambda^{-\sigma}_{p}\,   ||h_q ||_{2}} \, \sum_{Q_U<p^{\prime}\leq p-2} {\lambda_{p^{\prime}} \,|| h_{p^{\prime}} ||_{2} } \,\\ 
&\lesssim c_{r}\, \nu\,  \sum_{q>Q_U}  {\lambda^{2s-\sigma}_{q}\, \lambda_{Q_U}^{1+\sigma} \, ||h_q ||_{2}} \, \sum_{Q_U<p^{\prime}\leq q} {\lambda^{s+1}_{p^{\prime}} \,|| h_{p^{\prime}} ||_{2} \, \, \lambda^{-s}_{p^{\prime}} \, } \,\\ 
%\lesssim c_{r}\, \nu\,  \sum_{q>Q}  {\lambda^{s+1}_{q}\,  ||b_q ||_{2}} \, \sum_{Q<p^{\prime}\leq q} {\lambda^{s+1}_{p^{\prime}} \,|| b_{p^{\prime}} ||_{2} \, \lambda^{s-\sigma-1}_{q} \, \lambda_{Q}^{1+\sigma} \, \,\lambda^{-s}_{p^{\prime}} \, } \,\\ 
&\lesssim c_{r}\, \nu\,  \sum_{q>Q_U}  {\lambda^{s+1}_{q}\,  ||h_q ||_{2}} \, \sum_{Q_U<p^{\prime}\leq q} {\lambda^{s+1}_{p^{\prime}} \,|| h_{p^{\prime}} ||_{2} \, \big( L \lambda_{q-p^{\prime}} \big)^{s-\sigma-1} \, \lambda_{Q_U-p^{\prime}}^{1+\sigma} \, }  
\end{split}
\end{equation}
now assuming $\sigma \geq -1$, and using Jensen's and Young's inequalities yields
\begin{equation}\notag
\begin{split}
I_{522} &\lesssim c_{r}\, \nu\,  \sum_{q>Q_U}  {\lambda^{2s+2}_{q}\,  ||h_q ||^{2}_{2}} +  c_{r}\, \nu\, \sum_{q>Q_U} \Big( \sum_{Q_U<p^{\prime}\leq q} {\lambda^{s+1}_{p^{\prime}} \,|| h_{p^{\prime}} ||_{2} \, \big( L \lambda_{q-p^{\prime}} \big)^{s-\sigma-1} \, } \Big)^{2}\,\\ 
%\lesssim c_{r}\, \nu\,  \sum_{q>Q_U}  {\lambda^{2s+2}_{q}\,  ||h_q ||^{2}_{2}} +  c_{r}\, \nu\, \sum_{q>Q_U} \sum_{Q_U<p^{\prime}\leq q} {\lambda^{2s+2}_{p^{\prime}} \,|| h_{p^{\prime}} ||^2_{2} \, \big( L \lambda_{q-p^{\prime}} \big)^{s-\sigma-1} \, }\,\\ 
%\lesssim c_{r}\, \nu\,  \sum_{q>Q_U}  {\lambda^{2s+2}_{q}\,  ||h_q ||^{2}_{2}} +  c_{r}\, \nu\, \sum_{q>Q_U} {\lambda^{2s+2}_{p^{\prime}} \,|| h_{p^{\prime}} ||^2_{2}}  \sum_{Q_U<p^{\prime}\leq q}  {\big( L \lambda_{q-p^{\prime}} \big)^{s-\sigma-1} \, }\,\\ 
&\lesssim c_{r}\, \nu\,  \sum_{q>Q_U}  {\lambda^{2s+2}_{q}\,  ||h_q ||^{2}_{2}} \lesssim c_{r}\, \nu\,  \sum_{q\geq -1}  {\lambda^{2s+2}_{q}\,  ||h_q ||^{2}_{2}} 
\end{split}
\end{equation}
where we used $s<\sigma+1$. Similarly for $I_{523}$, we have
\begin{equation}\notag
\begin{split}
  I_{523} &=  \sum_{q> Q_U}\sum_{\substack{\vert p-q\vert\leq 2\\ p>Q_U+2}}{\lambda^{2s}_{q}} \int_{\mathbb{T}^n}{ |\, \Delta_{q}( u_{p}\cdot \nabla h_{\leq Q_U}) \cdot h_{q} | \, dx} \\  
  &\lesssim \sum_{q>Q_U}\sum_{\vert p-q\vert\leq 2}{\lambda^{2s}_{q}\, ||u_{p}||_{\infty} \,    || \nabla h_{\leq Q_U} ||_{2}\, ||h_q ||_{2}\, }\\
%&\lesssim c_{r}\, \nu\,  \sum_{q>Q_U}\sum_{\vert p-q\vert\leq 2}{\lambda^{2s}_{q}\, \Lambda_{U}^{1+\sigma} \, \lambda^{-\sigma}_{p}\,   ||h_q ||_{2}} \, \sum_{p^{\prime}\leq Q_U} {\lambda_{p^{\prime}} \,|| h_{p^{\prime}} ||_{2} } \,\\ 
&\lesssim c_{r}\, \nu\,  \sum_{q>Q_U}  {\lambda^{2s-\sigma}_{q}\, \lambda_{Q_U}^{1+\sigma} \, ||h_q ||_{2}} \, \sum_{p^{\prime}\leq Q_U} {\lambda_{p^{\prime}} \,|| h_{p^{\prime}} ||_{2}  } \,\\ 
%\lesssim c_{r}\, \nu\,  \sum_{q>Q}  {\lambda^{s+1}_{q}\,  ||b_q ||_{2}} \, \sum_{Q<p^{\prime}\leq q} {\lambda^{s+1}_{p^{\prime}} \,|| b_{p^{\prime}} ||_{2} \, \lambda^{s-\sigma-1}_{q} \, \lambda_{Q}^{1+\sigma} \, \,\lambda^{-s}_{p^{\prime}} \, } \,\\ 
&\lesssim c_{r}\, \nu\,  \sum_{q>Q_U}  {\lambda^{s+1}_{q}\,  ||h_q ||_{2}} \, \sum_{-1\leq p^{\prime}\leq Q_U} {\lambda^{s+1}_{p^{\prime}} \,|| h_{p^{\prime}} ||_{2} \, \big( L \lambda_{q-p^{\prime}} \big)^{s-\sigma-1} \, \lambda_{Q_U-p^{\prime}}^{1+\sigma} \, }  
\end{split}
\end{equation}
For $\sigma \geq -1$, we point out that $\lambda_{Q_U-p^{\prime}}^{1+\sigma}\leq \lambda_{Q_U+1}^{1+\sigma}$, is bounded by a constant number, and we can move it out and absorb it into $\lesssim$. Once again, implementing the condition $s<\sigma+1$ and utilizing Jensen's and Young's inequalities leads to 
\begin{equation}\notag
\begin{split}
I_{523} %\lesssim c_{r}\, \nu\,  \sum_{q>Q_U}  {\lambda^{2s+2}_{q}\,  ||h_q ||^{2}_{2}} +  c_{r}\, \nu\, \sum_{q>Q_U} \Big( \sum_{Q_U<p^{\prime}\leq q} {\lambda^{s+1}_{p^{\prime}} \,|| h_{p^{\prime}} ||_{2} \, \big( L \lambda_{q-p^{\prime}} \big)^{s-\sigma-1} \, } \Big)^{2}\,\\ 
%\lesssim c_{r}\, \nu\,  \sum_{q>Q_U}  {\lambda^{2s+2}_{q}\,  ||h_q ||^{2}_{2}} +  c_{r}\, \nu\, \sum_{q>Q_U} \sum_{Q_U<p^{\prime}\leq q} {\lambda^{2s+2}_{p^{\prime}} \,|| h_{p^{\prime}} ||^2_{2} \, \big( L \lambda_{q-p^{\prime}} \big)^{s-\sigma-1} \, }\,\\ 
%\lesssim c_{r}\, \nu\,  \sum_{q>Q_U}  {\lambda^{2s+2}_{q}\,  ||h_q ||^{2}_{2}} +  c_{r}\, \nu\, \sum_{q>Q_U} {\lambda^{2s+2}_{p^{\prime}} \,|| h_{p^{\prime}} ||^2_{2}}  \sum_{Q_U<p^{\prime}\leq q}  {\big( L \lambda_{q-p^{\prime}} \big)^{s-\sigma-1} \, }\,\\ 
\lesssim c_{r}\, \nu\,  \sum_{q>Q_U}  {\lambda^{2s+2}_{q}\,  ||h_q ||^{2}_{2}} \lesssim c_{r}\, \nu\,  \sum_{q\geq -1}  {\lambda^{2s+2}_{q}\,  ||h_q ||^{2}_{2}} 
\end{split}
\end{equation}

Proceeding with the estimate of $I_{53}$ and separating high and low frequency modes, we have 
\begin{equation}\notag
\begin{split}
|I_{53}| &\leq \sum_{q\geq -1}\sum_{ p\geq q-2}{\lambda^{2s}_{q}} \int_{\mathbb{T}^n}{ |\, \Delta_{q}( \tilde{u}_{p}\cdot \nabla h_{p}) \cdot h_{q} | \, dx} \\
&\leq \sum_{-1\leq q\leq Q_U}\sum_{ p\geq q-2}{\lambda^{2s}_{q}} \int_{\mathbb{T}^n}{ |\, \Delta_{q}( \tilde{u}_{p}\cdot \nabla h_{p}) \cdot h_{q} | \, dx} \\
& + \sum_{q> Q_U}\sum_{ p\geq q-2}{\lambda^{2s}_{q}} \int_{\mathbb{T}^n}{ |\, \Delta_{q}( \tilde{u}_{p}\cdot \nabla h_{p}) \cdot h_{q} | \, dx}=I_{531}+I_{532} 
\end{split}
\end{equation}
\begin{equation}\notag
\begin{split}
I_{531}&\leq \sum_{-1\leq q\leq Q_U}\sum_{ p\leq q-2}{\lambda^{2s}_{q}\, || \tilde{u}_{p}||_{\infty} \, || \nabla h_p||_{2}  \, || h_q ||_{2}  }\\
&\lesssim \sum_{-1\leq q\leq Q_U}\sum_{ p\leq q-2}{ \lambda^{2s}_{q} \,\lambda_{p}\,|| u_{p}||_{\infty} \, || h_p||_{2} \,|| h_q ||_{2}  } \\ 
&\lesssim \sum_{-1\leq q\leq Q_U}\sum_{ p\leq q-2}{ \lambda^{2s}_{q} \,\lambda_{p}\,|| u_{\leq Q_U}||_{\infty} \, || h_p||_{2} \,|| h_q ||_{2}  } \\ 
&\lesssim c_r\, \nu \sum_{-1\leq q\leq Q_U}\sum_{  p\leq q-2}{ \lambda^{2s}_{q} \,\lambda_{p}\,\Lambda_{U} \,|| h_p||_{2} \,|| h_q ||_{2}  } \\ 
&\lesssim c_{r}\, \nu \sum_{-1\leq q\leq Q_U}{\lambda^{s+1}_{q} \, ||h_q||_{2}\, \lambda_{Q_U}} \sum_{ p\leq q}{\lambda^{s+1}_{p}\, || h_p ||_{2}\, \big( L\lambda_{q-p}\big)^{s-1} \, \lambda^{-1}_{p}  } \\
\end{split}
\end{equation}
and for $s<1$, using Jensen and Young's inequalities leads to
\begin{equation}\notag
\begin{split}
I_{531} 
&\lesssim c_{r}\, \nu\,  \sum_{-1\leq q\leq Q_U}  {\lambda^{2s+2}_{q}\,  ||h_q ||^{2}_{2}\, \lambda^2_{Q_U}} \\
&\quad+  c_{r}\, \nu\, \sum_{-1\leq q\leq Q_U} \Big( \sum_{-1\leq p\leq q} {\lambda^{s+1}_{p} \,|| h_{p} ||_{2} \, \big( L\lambda_{q-p} \big)^{s-1} \, } \Big)^{2}\,\\ 
&\lesssim c_{r}\, \nu\,  \sum_{-1\leq q\leq Q_U}  {\lambda^{2s+2}_{q}\,  ||h_q ||^{2}_{2}} +  c_{r}\, \nu\, \sum_{-1\leq q\leq Q_U} {\lambda^{2s+2}_{p} \,|| h_{p} ||^{2}_{2}} \sum_{-1\leq p\leq q}{ \big( L\lambda_{q-p} \big)^{s-1} } \,\\ 
&\lesssim c_{r}\, \nu\,  \sum_{-1\leq q\leq Q_U}  {\lambda^{2s+2}_{q}\,  ||h_q ||^{2}_{2}\,} \lesssim c_{r}\, \nu\,  \sum_{q\geq -1}  {\lambda^{2s+2}_{q}\,  ||h_q ||^{2}_{2}}
\end{split}
\end{equation}
%%%%%%%%%%%%%%%%  note that I_531  only has low modes, so we can only use low mode part of wavenumber, but not the the high mode part of wavenumber. 
where we absorbed $\lambda^2_{Q_U}$ into constants. Finally, we estimate $I_{532}$ according to,

\begin{equation}\notag
\begin{split}
I_{532}&\leq \sum_{p>Q_U}\sum_{ q\leq p+2}{\lambda^{2s}_{q}\, || \tilde{u}_{p}||_{\infty} \, || \nabla h_p||_{2}  \, || h_q ||_{2}  }\\
&\lesssim \sum_{p>Q_U}{ \lambda_{p}\,|| u_{p}||_{\infty} \, || h_p||_{2}} \sum_{ q\leq p+2}{\lambda^{2s}_{q} \,|| h_q ||_{2}  } \\ 
&\lesssim c_{r}\, \nu \sum_{p>Q_U}{\Lambda_{U}^{1+\sigma} \, \lambda^{1-\sigma}_{p}\,|| h_p||_{2}} \sum_{ Q_U<q\leq p+2}{\lambda^{2s}_{q} \, || h_q ||_{2}  }\\
%&\lesssim c_{r}\, \nu\, \sum_{p>Q_U}{\lambda^{s+1}_{p} \, ||h_p||_{2}  \,} \sum_{ Q_U<q\leq p+2}{\lambda^{s+1}_{q}\, || h_q ||_{2}\, \lambda^{s-1}_{q}\, \lambda_{Q_U}^{1+\sigma} \, \lambda^{-s-\sigma}_{p}\,  } \\ \notag
&\lesssim c_{r}\, \nu \sum_{p>Q_U}{\lambda^{s+1}_{p} \, ||h_p||_{2}} \sum_{ Q_U<q\leq p+2}{\lambda^{s+1}_{q}\, || h_q ||_{2}\, \lambda^{1+\sigma}_{Q_U-q}\, \big( L\lambda_{q-p}\big)^{s+\sigma} \,  } \\
&\lesssim c_{r}\, \nu  \sum_{p>Q_U}  {\lambda^{2s+2}_{p}\,  ||h_p ||^{2}_{2}} +  c_{r}\, \nu \sum_{p>Q_U} \Big( \sum_{Q_U<q\leq p+2} {\lambda^{s+1}_{q} \,|| h_{q} ||_{2} \, \big( L \lambda_{q-p} \big)^{s+\sigma} \, } \Big)^{2}\, \\ 
&\lesssim c_{r}\, \nu\,  \sum_{p>Q_U}  {\lambda^{2s+2}_{p}\,  ||h_p ||^{2}_{2}} \lesssim c_{r}\, \nu\,  \sum_{p\geq -1}  {\lambda^{2s+2}_{p}\,  ||h_p ||^{2}_{2}} 
\end{split}
\end{equation}
%where we assume $\sigma\geq -1$. Using Young and Jensen inequalities leads to
where we once more used $s>-\sigma$ and $\sigma\geq -1$. 

In sum, consolidating all the estimates, we obtain the estimate of $I_{5}$ as, 
\begin{equation}\label{I_5}
I_5\lesssim c_{r}\, \nu\,  \sum_{q\geq -1}  {\lambda^{2s+2}_{q}\,  ||w_q ||^{2}_{2}}  +c_{r}\, \nu\,  \sum_{q\geq -1}  {\lambda^{2s+2}_{q}\,  ||h_q ||^{2}_{2}} 
\end{equation}
\vspace{0.5cm}

\subsubsection{ Estimate for $I_{6}$}

We employ the Bony's paraproduct to decompose $I_6$ as
\begin{equation}\notag
\begin{split}
I_{6} &= -\sum_{q\geq -1}{\lambda^{2s}_{q}} \int_{\mathbb{T}^n}{\Delta_{q}(w\cdot \nabla b^{(2)})\cdot h_{q}dx}\\
&=-\sum_{q\geq -1}\sum_{\vert p-q\vert\leq 2}{\lambda^{2s}_{q}} \int_{\mathbb{T}^n}{\Delta_{q}( w_{\leq p-2}\cdot \nabla b^{(2)}_p) \cdot h_{q}dx}\\ 
&\quad- \sum_{q\geq -1}\sum_{\vert p-q\vert\leq 2}{\lambda^{2s}_{q}} \int_{\mathbb{T}^n}{\Delta_{q}( w_{p}\cdot \nabla b^{(2)}_{\leq p-2}) \cdot h_{q}dx}\\ 
&\quad- \sum_{q\geq -1}\sum_{ p\geq q-2}{\lambda^{2s}_{q}} \int_{\mathbb{T}^n}{\Delta_{q}( \tilde{w}_{p}\cdot \nabla b^{(2)}_p) \cdot h_{q}dx}\\
&=I_{61}+I_{62}+I_{63} 
\end{split}
\end{equation}
As before, since $h\vert_{\leq Q_B}=0$, we note that $I_{61}$ consists of only higher modes, 
\begin{equation}\notag
\begin{split}
|I_{61}|&\leq \sum_{q> Q_B}\sum_{\substack{\vert p-q\vert\leq 2\\ p>Q_B+2}}{\lambda^{2s}_{q}} \int_{\mathbb{T}^n}{ |\, \Delta_{q}( w_{(Q_U\, , \, p-2\, ]\, }\cdot \nabla b^{(2)}_p) \cdot h_{q} |\,dx}\\
&\leq \sum_{q>Q_B}\sum_{ \substack{\vert p-q\vert\leq 2\\ p>Q_B+2}}{\lambda^{2s}_{q}\, || \nabla b^{(2)}_{p}||_{r}\, || h_{q}||_{\frac{2r}{r-2}}\, || w_{(Q_U\, , \, p-2\, ]\, }||_{2}} \\ 
&\lesssim \sum_{q>Q_B}\sum_{ \substack{\vert p-q\vert\leq 2\\ p>Q_B+2}}{\lambda^{2s+1+\frac{n}{r}}_{q}\, || b^{(2)}_{q}||_{r}\, || h_{q}||_{2}\,  \sum_{Q_U<p^{\prime}\leq p-2}{||w_{p^{\prime}}||_{2}} } \\
&\lesssim c_{r}\, \mu \, \sum_{q>Q_B}{\lambda^{s+1}_{q}\, || h_{q}||_{2}\,  \sum_{Q_U<p^{\prime}\leq q}{\lambda^{s+1}_{p^{\prime}}\, ||w_{p^{\prime}}||_{2} \,\,  \lambda^{s-\delta}_{q} \,\Lambda^{\delta}_{B} \lambda^{-s-1}_{p^{\prime}}} }\\
&\lesssim c_{r}\, \mu \, \sum_{q>Q_B}{\lambda^{s+1}_{q}\, || h_{q}||_{2}\,  \sum_{Q_U<p^{\prime}\leq q}{\lambda^{s+1}_{p^{\prime}}\, ||w_{p^{\prime}}||_{2} \,\,  \lambda^{s-\delta}_{q-p^{\prime}} \, \lambda^{1+\delta}_{Q_{B}-p^{\prime}} \, \lambda^{-1}_{Q_B}} }
\end{split}
\end{equation}
Here we use $w_{\leq p-2}=w_{(Q_U,p-2]}+w_{\leq Q_U}=w_{(Q_U,p-2]}$, so the low-frequency truncation in the velocity factor is governed by $Q_U$, while $Q_B$ enters through $h_q$ and the bound on $b^{(2)}_p$.
considering $\delta>-1$ and using Jensen and young's inequalities, it follows that 
\begin{equation}\notag
\begin{split}
| I_{61}| &\lesssim c_{r}\, \mu \, \sum_{q>Q_B}{\lambda^{2s+2}_{q}\, || h_{q}||^{2}_{2}}\, +  c_{r}\, \mu \, \sum_{q>Q_B} \Big( \sum_{Q_U<p^{\prime}\leq q}{\lambda^{s+1}_{p^{\prime}}\, ||w_{p^{\prime}}||_{2} \,\,  \lambda^{s-\delta}_{q-p^{\prime}} } \Big)^2 \\ 
 %&\lesssim  c_{r}\, \mu \, \sum_{q>Q_B}{\lambda^{2s+2}_{q}\, || h_{q}||^{2}_{2}}\, +  c_{r}\, \mu \, \sum_{q>Q_B}{{\lambda^{2s+2}_{q}\, |||w_{q}||^{2}_{2}}}  \\ 
 &\lesssim  c_{r}\, \mu \, \sum_{q\geq -1}{\lambda^{2s+2}_{q}\, || h_{q}||^{2}_{2}}\, +  c_{r}\, \mu \, \sum_{q\geq -1}{{\lambda^{2s+2}_{q}\, ||w_{q}||^{2}_{2}}} \\ 
\end{split}
\end{equation}
where $s-\delta<0$. 

Subsequently, we proceed to estimate $I_{62}$ by splitting it into two terms using wavenumber, and estimating each one according to
\begin{equation}\notag
\begin{split}
I_{62}= &- \sum_{q >Q_B }\sum_{\vert p-q\vert\leq 2}{\lambda^{2s}_{q}} \int_{\mathbb{T}^n}{\Delta_{q}( w_{p}\cdot \nabla b^{(2)}_{\leq p-2}) \cdot h_{q}dx}\\ \notag 
= &- \sum_{q>Q_B}\sum_{\vert p-q\vert\leq 2}{\lambda^{2s}_{q}} \int_{\mathbb{T}^n}{\Delta_{q}( w_{p}\cdot \nabla b^{(2)}_{\leq Q_B}) \cdot h_{q}dx}\\ \notag
&- \sum_{q>Q_B}\sum_{\vert p-q\vert\leq 2}{\lambda^{2s}_{q}} \int_{\mathbb{T}^n}{\Delta_{q}( w_{p}\cdot \nabla b^{(2)}_{(Q_B\, , \,  p-2\, ] \, }) \cdot h_{q}dx}=I_{621}+I_{622}
\end{split}
\end{equation}
\begin{equation}\notag
\begin{split}
| I_{621}| &\leq \sum_{q>Q_B}\sum_{ \substack{\vert p-q\vert\leq 2\\ p>Q_B+2}}{\lambda^{2s}_{q}\, ||w_p ||_{2}\,  || \nabla b^{(2)}_{\leq Q_B}||_{\infty}\, || h_{q}||_{2}}\\
& \lesssim c_r\, \mu\, \sum_{q>Q_B}\sum_{\substack{\vert p-q\vert\leq 2\\ p>Q_B+2}}{\lambda^{2s}_{q}\, \Lambda_{B} \, ||w_p ||_{2}\, || h_{q}||_{2}} \\
&\lesssim  c_{r}\, \mu \,  \sum_{q>Q_B}{\lambda^{2s+2}_{q} \, ||w_q ||_{2}\,|| h_{q}||_{2} \,  \lambda_{Q_B}\lambda^{-2}_{q} } \lesssim \sum_{q>Q_B}{\lambda^{2s+2}_{q}\, ||w_q ||_{2}\, || h_{q}||_{2} \, } \\ 
%&\lesssim c_{r}\, \mu \, \sum_{q>Q_B}{ \lambda^{2s+2}_{q}\,  || w_{q}||^{2}_{2}} + c_{r}\, \mu \,\sum_{q>Q_B} { \lambda^{2s+2}_{q}\, ||h_{q} ||^{2}_{2}} \\ 
&\lesssim c_{r}\, \mu \, \sum_{q\geq -1}{ \lambda^{2s+2}_{q}\,  || w_{q}||^{2}_{2}} + c_{r}\, \mu \,\sum_{q\geq -1} { \lambda^{2s+2}_{q}\, ||h_{q} ||^{2}_{2}}
\end{split}
\end{equation}
where we used the fact that $\lambda_{Q_B}\lambda^{-2}_{q}<1$ for $q>Q_B$. Similarly, we have
\begin{equation}\notag
\begin{split}
    | I_{622}| &\leq \sum_{q>Q_B}\sum_{ \substack{\vert p-q\vert\leq 2\\ \, p>Q_B+2}}{\lambda^{2s}_{q}\, ||h_q ||_{2}\, ||w_p ||_{\frac{2r}{r-2}}\,   || \nabla b^{(2)}_{(\, Q_B\, ,\, p-2\,]\, }||_{r}\, } \\ 
  & \lesssim \sum_{q>Q_B}\sum_{ \substack{\vert p-q\vert\leq 2\\ \, p>Q_B+2}}{\lambda^{2s}_{q}\, \, ||h_q ||_{2}\, \lambda^{\frac{n}{r}}_{p}\,  ||w_p ||_{2} }\, \sum_{Q_B<p^{\prime}\leq p-2}{ || \nabla b^{(2)}_{p^{\prime}}||_{r}\, } \\ 
   & \lesssim \sum_{q>Q_B}{\lambda^{2s+2}_{q}\, \, ||h_q ||_{2}\,   ||w_q ||_{2} }\, \sum_{Q_B<p^{\prime}\leq q}{  \lambda^{\frac{n}{r}-2}_{q}\, \lambda_{p^{\prime}}\, || b^{(2)}_{p^{\prime}}||_{r}\, } \\ 
    &\lesssim c_{r}\, \mu \,  \sum_{q>Q_B}{\lambda^{2s+2}_{q}\, \, ||h_q ||_{2}\,   ||w_q ||_{2} }\, \sum_{Q_B<p^{\prime}\leq q}{ \,\lambda^{\frac{n}{r}-2}_{q}\, \lambda^{1-\frac{n}{r}-\delta}_{p^{\prime}} \, \Lambda^{\delta}_B  } \\ 
    &\lesssim c_{r}\, \mu \,  \sum_{q>Q_B}{\lambda^{2s+2}_{q}\, \, ||h_q ||_{2}\,   ||w_q ||_{2} }\, \sum_{Q_B<p^{\prime}\leq q}{  (L\lambda_{q-p^{\prime}})^{\frac{n}{r}-2}\, \, \lambda^{1+\delta}_{Q_{B}-p^{\prime}} \,\lambda_{Q_B}^{-1} } \\ 
\end{split}
\end{equation}
where $\delta>-1$ as before, and we also assume $r>\frac{n}{2}$. Therefore, we obtain
\begin{equation}\notag
\begin{split}
    | I_{622}| & \lesssim c_{r}\, \mu \,  \sum_{q>Q_B}{\lambda^{2s+2}_{q}\,  ||h_q ||^{2}_{2} }+c_{r}\, \mu \,  \sum_{q>Q_B}{\lambda^{2s+2}_{q}\,  ||w_q ||^{2}_{2} }\, \\ 
    &\lesssim c_{r}\, \mu \,  \sum_{q\geq -1}{\lambda^{2s+2}_{q}\,  ||h_q ||^{2}_{2} }+c_{r}\, \mu \,  \sum_{q\geq -1}{\lambda^{2s+2}_{q}\,  ||w_q ||^{2}_{2} }\, \\ 
\end{split}
\end{equation}
Subsequently, we proceed with estimation of $I_{63}$. Analogous to $I_{33}$, we make use of wavenumber to split $I_{63}$ into 
\begin{equation}\notag
\begin{split}
   I_{63} =& - \sum_{q\geq -1}\sum_{ p\geq q-2}{\lambda^{2s}_{q}} \int_{\mathbb{T}^n}{\Delta_{q}( \tilde{w}_{p}\cdot \nabla b^{(2)}_p) \cdot h_{q}dx}  \\
   =&- \sum_{Q_B-1\leq p\leq Q_B }\sum_{ Q_B<q\leq p+2}{\lambda^{2s}_{q}} \int_{\mathbb{T}^n}{\Delta_{q}( \tilde{w}_{p}\cdot \nabla b^{(2)}_p) \cdot h_{q}dx}   \\ 
   &- \sum_{p>Q_B}\sum_{ q\leq p+2}{\lambda^{2s}_{q}} \int_{\mathbb{T}^n}{\Delta_{q}( \tilde{w}_{p}\cdot \nabla b^{(2)}_p) \cdot h_{q}dx} =I_{631}+I_{632} 
\end{split}
\end{equation}
where we used the fact that $h\vert_{\leq Q_B}=0$, so $q>Q_B$, and hence
the conditions $p\geq q-2$ and $p\leq Q_B$ force $Q_B-1\leq p\leq Q_B$.
We estimate each term separately.
\begin{equation}\notag
\begin{split}
    |I_{631}|& \leq \sum_{Q_B-1\leq p\leq Q_B }\sum_{ Q_B<q\leq p+2}{\lambda^{2s}_{q}\, || \nabla b^{(2)}_{p}||_{\infty}\, ||\tilde{w}_{p} ||_{2}\, ||h_q ||_{2}\,    } \\
       &\lesssim c_{r}\, \mu \sum_{Q_B-1\leq p\leq Q_B }\sum_{ Q_B<q\leq p+2}{\lambda^{2s}_{q}\,\Lambda_B\, ||\tilde{w}_{p} ||_{2}\, ||h_q ||_{2}\,    } \\
    & \lesssim c_{r}\, \mu \sum_{Q_B<q\leq Q_B+2}{\lambda^{2s+2}_{q}\,  ||h_q ||^{2}_{2} }+c_{r}\, \mu \sum_{Q_B-2\leq p\leq Q_B+1}{\lambda^{2s+2}_{p}\,  ||w_p ||^{2}_{2} } \\
    & \lesssim c_{r}\, \mu \sum_{q\geq -1}{\lambda^{2s+2}_{q}\,  ||w_q ||^{2}_{2} }+c_{r}\, \mu \sum_{q\geq -1}{\lambda^{2s+2}_{q}\,  ||h_q ||^{2}_{2} }.
   \end{split}
\end{equation}
where we used $\|\nabla b^{(2)}_p\|_{\infty}\leq \|\nabla b^{(2)}_{\leq Q_B}\|_{\infty}\lesssim c_r\mu \Lambda_B$ and $\|\tilde{w}_p\|_2^2\lesssim \sum_{|j|\leq 1}\|w_{p+j}\|_2^2$; since only finitely many indices occur and $|q-p|\leq 2$, the dyadic factors are comparable and can be absorbed into constants. 
Finally we estimate $I_{632}$ according to

\begin{equation}\notag
\begin{split}
    |I_{632}|& \leq \sum_{p>Q_B}\sum_{ q\leq p+2}{\lambda^{2s}_{q}\, || \nabla b^{(2)}_{p}||_{r}\, ||\tilde{w}_{p} ||_{\frac{2r}{r-2}}\, ||h_q ||_{2}\, }\\
    & \lesssim \sum_{p>Q_B}{ \lambda_{p} \, || b^{(2)}_{p}||_{r}\,  \lambda^{\frac{n}{r}}_{p}\, ||w_{p} ||_{2}\,} \sum_{ q\leq p+2}{ \lambda^{2s}_{q}\, ||h_q ||_{2}\, }\\
    &\lesssim c_r \mu \sum_{p>Q_B}{ \lambda^{s+1}_{p}\, ||w_{p} ||_{2}\, } \sum_{ Q_B<q\leq p+2}{ \lambda^{s+1}_{q}\, ||h_q ||_{2}\,\, (L\lambda_{q-p})^{s+\delta}\, (\lambda_{Q_{B}-q})^{1+\delta}\,\, \lambda_{Q_B}^{-1}} 
 \end{split}
\end{equation}
where $\delta>-1$. Also letting $s>-\delta$ and using Jensen and Young's inequalities gives
\begin{equation}\notag
\begin{split}
    |I_{632}| &\lesssim c_{r}\, \mu \,  \sum_{p>Q_B}{ \,  \lambda^{2s+2}_{p}\, ||w_{p} ||^{2}_{2}\,} +c_{r}\, \mu \,   \sum_{p>Q_B} \Big( \sum_{ Q_B<q\leq p+2}{ \lambda^{s+1}_{q}\, ||h_q ||_{2}\,\, (L\lambda_{q-p})^{s+\delta}\, } \Big)^2 \\ 
    &\lesssim c_{r}\, \mu \,  \sum_{p\geq -1}{ \,  \lambda^{2s+2}_{p}\, ||w_{p} ||^{2}_{2}\,} + c_{r}\, \mu \,  \sum_{p\geq -1}{ \,  \lambda^{2s+2}_{p}\, ||h_{p} ||^{2}_{2}\,}
  \end{split}
\end{equation}

Taken together all the estimates, we obtain the estimate of $I_{6}$ according to, 

\begin{equation} \label{I_6}
I_6 \lesssim c_{r}\, \mu\,  \sum_{q\geq -1}  {\lambda^{2s+2}_{q}\,  ||w_q ||^{2}_{2}}  + c_{r}\, \mu\,  \sum_{q\geq -1}  {\lambda^{2s+2}_{q}\,  ||h_q||^{2}_{2}} 
\end{equation}

\vspace{0.5cm}

\subsubsection{Estimate for $I_7$}

The term $I_7$ is handled by the same Bony decomposition as in the proof of
\eqref{I_3}. Writing $I_7=I_{71}+I_{72}+I_{73}$, the terms $I_{71}$ and
$I_{73}$ are estimated exactly as the corresponding pieces $I_{31}$ and
$I_{33}$, with $b^{(1)}$ replaced by $u$ and the velocity bounds
\eqref{Hall-high-modes} and \eqref{Hall-low-modes} used in place of the
magnetic wavenumber bounds. In particular, the $h$-factor is still governed by
$Q_B$: since $h_{\leq Q_B}=0$ by \eqref{Hall-low-vanishing}, the low-frequency
magnetic contribution vanishes in the same way as in the proof of \eqref{I_3}.
For $I_{72}$ one instead splits
$u_{\leq p-2}=u_{(Q_U,p-2]}+u_{\leq Q_U}$ and repeats the argument used for
the estimate of $I_{62}$, so here $Q_U$ enters through the velocity factor
while $Q_B$ still appears through the restriction on $h$. With these two
cutoffs playing their respective roles, the same convolution estimates yield
\begin{equation}\label{I_7}
|I_7|\lesssim c_r \nu \sum_{q\geq -1}\lambda_q^{2s+2}\|h_q\|_2^2
\end{equation}
for $-\sigma<s<1$. Since $r>n$, we have $\frac{n}{r}-1<1$, so this condition
is compatible with the range in Theorem \ref{thm-Hall-det}.

\vspace{0.5cm}

\subsubsection{Estimate for $I_8$}

The estimate of $I_8$ follows the same Bony decomposition and convolution
estimates as in the proof of \eqref{I_4}. In the middle paraproduct, for
$p>Q_U+2$, we write
\[
w_{\leq p-2}=w_{(Q_U,p-2]}+w_{\leq Q_U}=w_{(Q_U,p-2]},
\]
because $w_{\leq Q_U}=0$ by \eqref{Hall-low-vanishing}. The terms involving
$b^{(2)}_{\leq Q_B}$ are then controlled by the low-mode bound in the
definition of $\Lambda_B$, while the terms with $b^{(2)}_{(Q_B,p-2]}$ and
$\widetilde{b}^{(2)}_p$ are handled by \eqref{Hall-high-modes}, exactly as in
\eqref{I_4}. Repeating those estimates gives
\begin{equation}\label{I_8}
|I_8|\lesssim c_r \mu \sum_{q\geq -1}\lambda_q^{2s+2}\|w_q\|_2^2
+c_r \mu \sum_{q\geq -1}\lambda_q^{2s+2}\|h_q\|_2^2
\end{equation}
for $-\delta<s<1+\delta$. Under the assumptions of Theorem
\ref{thm-Hall-det}, this range is satisfied.

\vspace{0.5cm}

To finish the proof, we combine the standard velocity estimate from Ref.
\cite{1},
\[
|I_1|+|I_2|\lesssim c_r \nu \sum_{q\geq -1}\lambda_q^{2s+2}\|w_q\|_2^2,
\]
the Hall estimates proved in Subsection 4.1,
\[
|J|+|K|\lesssim c_r \mu \sum_{q\geq -1}\lambda_q^{2s+2}\|h_q\|_2^2,
\]
and \eqref{I_3}--\eqref{I_8}. Hence, after choosing $c_r>0$ sufficiently
small in the definitions of $\Lambda_U$ and $\Lambda_B$, there exists a
constant $c_0>0$ such that
\begin{equation}\label{Hall-final-energy}
\frac{d}{dt}\,X(t)+c_0\sum_{q\geq -1}\lambda_q^{2s+2}
\Big(\nu\|w_q\|_2^2+\mu\|h_q\|_2^2\Big)\leq 0,
\end{equation}
where
\[
X(t):=\sum_{q\geq -1}\lambda_q^{2s}\Big(\|w_q\|_2^2+\|h_q\|_2^2\Big).
\]
Since $\lambda_q\geq \lambda_{-1}>0$ on $\mathbb{T}^n$, the dissipative term
controls $X(t)$:
\[
\sum_{q\geq -1}\lambda_q^{2s+2}\Big(\nu\|w_q\|_2^2+\mu\|h_q\|_2^2\Big)
\geq c_1 X(t)
\]
for some $c_1>0$ depending only on $\nu$, $\mu$, and $L$. Therefore
\[
\frac{d}{dt}X(t)+c_1X(t)\leq 0,
\]
and Gronwall's inequality implies
\[
X(t)\leq X(0)e^{-c_1 t}.
\]
Consequently,
\[
\lim_{t\to\infty}\Big(\|u(t)-v(t)\|_{H^s}
+\|b^{(1)}(t)-b^{(2)}(t)\|_{H^s}\Big)=0.
\]
This completes the proof of Theorem \ref{thm-Hall-det}.

\section{Average Determining Wavenumbers and Dissipation Scales}

\subsection{EMHD}

For the three-dimensional EMHD model, the phenomenology in \cite{3} predicts
the magnetic dissipation number
\begin{equation}\label{eq:kappa-emhd}
\kappa_e^{\delta_b}
:=
\left(\frac{\varepsilon_b}{\mu^3}\right)^{\frac{1}{\delta_b-1}},
\qquad
\varepsilon_b
:=
\mu \lambda_0^{\delta_b}
\left\langle
\sum_{q\ge -1}\lambda_q^2\|b_q\|_2^2
\right\rangle,
\end{equation}
where \(\delta_b\in(0,3)\) is an intermittency dimension and
\begin{equation}\label{eq:time-average}
\langle f\rangle := \frac{1}{T}\int_t^{t+T} f(\tau)\, d\tau.
\end{equation}
To compare \(\kappa_e^{\delta_b}\) with the determining wavenumber
\(\Lambda_{b,r}\), we assume the scale-localized intermittency relation
\begin{equation}\label{eq:intermdef-emhd}
\left\langle
\sum_{q\ge -1}\lambda_q^{-1+\delta_b+\frac{6}{r}}\|b_q\|_r^2
\right\rangle
\lesssim
\lambda_0^{\delta_b}
\left\langle
\sum_{q\ge -1}\lambda_q^{2}\|b_q\|_2^2
\right\rangle.
\end{equation}
This is the only intermittency input used below.

\begin{Lemma}\label{lem:emhd-average-pointwise}
Assume \(0<\delta<1\) and \(2\delta+1<\delta_b<3\). Let \(Q=Q(t)\) be such
that \(\Lambda_{b,r}(t)=\lambda_Q\). Then for every \(t\),
\begin{equation}\label{eq:emhd-pointwise-average}
(c_r\mu)^2\bigl(\Lambda_{b,r}(t)-\lambda_0\bigr)_+^{\delta_b-1}
\lesssim
\sum_{q\ge -1}\lambda_q^{-1+\delta_b+\frac{6}{r}}\|b_q(t)\|_r^2.
\end{equation}
\end{Lemma}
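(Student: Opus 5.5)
The plan is to exploit the minimality in the definition \eqref{wave} of $\Lambda_{b,r}(t)=\lambda_Q$ with $n=3$: the index $Q-1$ must violate one of the two defining conditions. Write $S(t):=\sum_{q\ge -1}\lambda_q^{-1+\delta_b+\frac{6}{r}}\|b_q(t)\|_r^2$ for the right-hand side of \eqref{eq:emhd-pointwise-average}.

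\emph{Reduction.} If $\Lambda_{b,r}(t)\le\lambda_0$, the left-hand side of \eqref{eq:emhd-pointwise-average} vanishes and there is nothing to prove. In particular this covers every $Q\le 0$, where minimality gives no information because $Q-1\notin\mathbb N$. So assume $Q\ge1$. Since $(\Lambda_{b,r}-\lambda_0)_+\le\lambda_Q$ and $\delta_b-1>0$, it suffices to prove
\[
(c_r\mu)^2\lambda_Q^{\delta_b-1}\lesssim S(t).
\]
By minimality, either (a) there is some $p\ge Q$ with $(L\lambda_{p-Q+1})^{\delta}\lambda_p^{\frac3r}\|b_p\|_r\ge c_r\mu$, or (b) $\lambda_{Q-1}^{-1}\|\nabla b_{\le Q-1}\|_\infty\ge c_r\mu$.

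\emph{Case (a).} Square the inequality. Since $\lambda_{p-Q+1}^{2\delta}\lesssim\lambda_p^{2\delta}\lambda_Q^{-2\delta}$, with $L^{2\delta}$ and powers of $2$ absorbed into constants, this gives
\[
(c_r\mu)^2\lambda_Q^{\delta_b-1}\lesssim \lambda_Q^{\delta_b-1-2\delta}\lambda_p^{2\delta+\frac6r}\|b_p\|_r^2 .
\]
The hypothesis $\delta_b>2\delta+1$ makes the exponent $\delta_b-1-2\delta$ positive. Hence $\lambda_Q^{\delta_b-1-2\delta}\le\lambda_p^{\delta_b-1-2\delta}$ for $p\ge Q$. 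The right-hand side is then at most $\lambda_p^{-1+\delta_b+\frac6r}\|b_p\|_r^2\le S(t)$, which finishes this case.

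\emph{Case (b).} Use Bernstein (Lemma \ref{brn}) blockwise:
\[
c_r\mu\,\lambda_Q\lesssim\|\nabla b_{\le Q-1}\|_\infty\lesssim\sum_{p\le Q-1}\lambda_p^{1+\frac3r}\|b_p\|_r .
\]
Split the weight as $\lambda_p^{1+\frac3r}=\lambda_p^{\frac{-1+\delta_b+6/r}{2}}\cdot\lambda_p^{\frac{3-\delta_b}{2}}$ and apply Cauchy--Schwarz in $p$. This yields
\[
(c_r\mu)^2\lambda_Q^2\lesssim S(t)\sum_{p\le Q-1}\lambda_p^{3-\delta_b}\lesssim\lambda_Q^{3-\delta_b}S(t),
\]
where the geometric sum is controlled because $\delta_b<3$. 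Dividing by $\lambda_Q^{3-\delta_b}$ gives $(c_r\mu)^2\lambda_Q^{\delta_b-1}\lesssim S(t)$.

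The main point to get right is the exponent bookkeeping in case (a). The factor $(L\lambda_{p-Q+1})^\delta$ penalizes blocks far above $Q$, and it can be converted into a gain only because $\delta_b-1>2\delta$. The remaining hypotheses, $\delta<1$ and $\delta_b<3$, serve only to make these exponents and the geometric sum in case (b) behave; the constants depend on $L,\delta,\delta_b,r$ but not on $t$.
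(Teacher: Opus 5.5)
Your treatment of the case $\lambda_0<\Lambda_{b,r}(t)<\infty$ is correct and is essentially the paper's proof. You use the same inadmissibility of level $Q-1$ and the same exponent bookkeeping in the high-frequency case: your comparison $\lambda_Q^{\delta_b-1-2\delta}\le\lambda_p^{\delta_b-1-2\delta}$ is the paper's observation that $(L\lambda_{p-Q})^{2\delta+1-\delta_b}\lesssim 1$. In the low-frequency case you use the same Bernstein plus Cauchy--Schwarz splitting, with the geometric sum controlled by $\delta_b<3$.

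What you omit is the case $\Lambda_{b,r}(t)=\infty$, where the admissible set in \eqref{wave} is empty and no integer $Q$ exists. The paper treats this case explicitly. The lemma is asserted for every $t$ and is then averaged in time in Theorem \ref{thm:avg-emhd}. For weak solutions there is no a priori guarantee that some level is admissible at every time.

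Your own argument closes this immediately. Cases (a) and (b) use only that level $Q-1$ is inadmissible, never the minimality of $Q$. So if every level $N\in\mathbb N$ is inadmissible, you obtain $(c_r\mu)^2\lambda_{N+1}^{\delta_b-1}\lesssim S(t)$ with a constant independent of $N$. Hence $S(t)=\infty$, and \eqref{eq:emhd-pointwise-average} holds in the sense $\infty\le\infty$.

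The paper reaches the same conclusion by contradiction, showing directly that $S(t)<\infty$ would force both defining conditions to hold for all large $q$. Your route is shorter, but you need to state it.
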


\begin{proof}
If \(\Lambda_{b,r}(t)=\lambda_0\), there is nothing to prove. Assume first that
\(\lambda_0<\Lambda_{b,r}(t)<\infty\). Since \(q=Q-1\) does not belong to the
admissible set in \eqref{wave}, at least one of the two defining conditions
fails at level \(Q-1\).

If the high-frequency condition fails, then for some \(p\ge Q\),
\[
(L\lambda_{p-Q+1})^\delta \lambda_p^{3/r}\|b_p\|_r\ge c_r\mu.
\]
Using \(\Lambda_{b,r}=\lambda_Q\) and \(L\lambda_{p-Q+1}=2L\lambda_{p-Q}\), we obtain
\[
(c_r\mu)^2 \Lambda_{b,r}^{\delta_b-1}
\lesssim
(L\lambda_{p-Q})^{2\delta}\Lambda_{b,r}^{\delta_b-1}\lambda_p^{6/r}\|b_p\|_r^2.
\]
Since
\[
\Lambda_{b,r}^{\delta_b-1}
=
\lambda_p^{\delta_b-1}(L\lambda_{p-Q})^{1-\delta_b},
\]
it follows that
\[
(c_r\mu)^2 \Lambda_{b,r}^{\delta_b-1}
\lesssim
(L\lambda_{p-Q})^{2\delta+1-\delta_b}
\lambda_p^{-1+\delta_b+\frac{6}{r}}\|b_p\|_r^2.
\]
Because \(\delta_b>2\delta+1\), the exponent \(2\delta+1-\delta_b\) is
negative, hence
\[
(c_r\mu)^2 \Lambda_{b,r}^{\delta_b-1}
\lesssim
\lambda_p^{-1+\delta_b+\frac{6}{r}}\|b_p\|_r^2
\le
\sum_{q\ge -1}\lambda_q^{-1+\delta_b+\frac{6}{r}}\|b_q\|_r^2.
\]

If instead the low-frequency condition fails, then
\[
\lambda_{Q-1}^{-1}\|\nabla b_{\le Q-1}\|_\infty \ge c_r\mu,
\]
so \(\|\nabla b_{\le Q-1}\|_\infty \gtrsim c_r\mu \Lambda_{b,r}\). Therefore
\[
(c_r\mu)^2 \Lambda_{b,r}^{\delta_b-1}
\lesssim
\Lambda_{b,r}^{\delta_b-3}\|\nabla b_{\le Q-1}\|_\infty^2.
\]
By Bernstein and Cauchy-Schwarz,
\begin{align*}
\Lambda_{b,r}^{\delta_b-3}\|\nabla b_{\le Q-1}\|_\infty^2
&\lesssim
\Lambda_{b,r}^{\delta_b-3}
\Big(\sum_{q<Q}\lambda_q^{1+\frac{3}{r}}\|b_q\|_r\Big)^2 \\
&=
\Lambda_{b,r}^{\delta_b-3}
\Big(
\sum_{q<Q}
\lambda_q^{\frac{-1+\delta_b+6/r}{2}}\|b_q\|_r
(L\lambda_{Q-q})^{\frac{\delta_b-3}{2}}
\Big)^2 \\
&\lesssim
\Big(
\sum_{q<Q}\lambda_q^{-1+\delta_b+\frac{6}{r}}\|b_q\|_r^2
\Big)
\Big(
\sum_{q<Q}(L\lambda_{Q-q})^{\delta_b-3}
\Big).
\end{align*}
Since \(\delta_b<3\), the last geometric sum is bounded independently of \(Q\).
This proves \eqref{eq:emhd-pointwise-average} when
\(\Lambda_{b,r}(t)<\infty\).

It remains to treat the case \(\Lambda_{b,r}(t)=\infty\). Let
\[
S(t):=\sum_{q\ge -1}\lambda_q^{-1+\delta_b+\frac{6}{r}}\|b_q(t)\|_r^2.
\]
We claim that \(S(t)=\infty\). Indeed, if \(S(t)<\infty\), then for every
\(p>q\),
\begin{align*}
(L\lambda_{p-q})^\delta \lambda_p^{3/r}\|b_p\|_r
&=
\lambda_q^{\frac{1-\delta_b}{2}}
(L\lambda_{p-q})^{\delta+\frac{1-\delta_b}{2}}
\Big(
\lambda_p^{-1+\delta_b+\frac{6}{r}}\|b_p\|_r^2
\Big)^{1/2} \\
&\le
\lambda_q^{\frac{1-\delta_b}{2}}
(L\lambda_{p-q})^{\delta+\frac{1-\delta_b}{2}} S(t)^{1/2}.
\end{align*}
Because \(\delta_b>2\delta+1\), the exponent
\(\delta+\frac{1-\delta_b}{2}\) is negative, so the right-hand side is bounded
by \(C\lambda_q^{\frac{1-\delta_b}{2}}S(t)^{1/2}\), which tends to \(0\) as
\(q\to\infty\). Hence the high-frequency condition in \eqref{wave} holds for
all sufficiently large \(q\).

Similarly,
\begin{align*}
\lambda_q^{-1}\|\nabla b_{\le q}\|_\infty
&\lesssim
\lambda_q^{-1}\sum_{p\le q}\lambda_p^{1+\frac{3}{r}}\|b_p\|_r \\
&=
\lambda_q^{\frac{1-\delta_b}{2}}
\sum_{p\le q}
(L\lambda_{q-p})^{\frac{\delta_b-3}{2}}
\Big(
\lambda_p^{-1+\delta_b+\frac{6}{r}}\|b_p\|_r^2
\Big)^{1/2} \\
&\lesssim
\lambda_q^{\frac{1-\delta_b}{2}}S(t)^{1/2},
\end{align*}
where we used \(\delta_b<3\) in the last step. The right-hand side again tends
to \(0\), so the low-frequency condition in \eqref{wave} also holds for all
sufficiently large \(q\). This contradicts \(\Lambda_{b,r}(t)=\infty\). Hence
\(S(t)=\infty\), and \eqref{eq:emhd-pointwise-average} remains valid with the
convention \(\infty\le \infty\).
\end{proof}

\begin{Theorem}\label{thm:avg-emhd}
Assume \(0<\delta<1\) and \(\max\{2,2\delta+1\}<\delta_b<3\). If the
intermittency relation \eqref{eq:intermdef-emhd} holds, then
\begin{equation}\label{eq:avg-emhd}
\langle \Lambda_{b,r}\rangle \lesssim \lambda_0 + \kappa_e^{\delta_b}.
\end{equation}
\end{Theorem}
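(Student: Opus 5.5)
The argument will combine the pointwise bound of Lemma \ref{lem:emhd-average-pointwise} with the intermittency relation \eqref{eq:intermdef-emhd} and a Jensen/H\"older step in time. The exponent $\delta_b-1$ lies in $(1,2)$ because $\delta_b>2$. This is exactly what the extra hypothesis $\max\{2,2\delta+1\}<\delta_b$ provides beyond the lemma: the map $x\mapsto x^{\delta_b-1}$ is then convex on $[0,\infty)$.

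First, average \eqref{eq:emhd-pointwise-average} over $[t,t+T]$ with the time average \eqref{eq:time-average}. Then apply \eqref{eq:intermdef-emhd} and the definition of $\varepsilon_b$ in \eqref{eq:kappa-emhd}:
\[
(c_r\mu)^2\bigl\langle(\Lambda_{b,r}-\lambda_0)_+^{\delta_b-1}\bigr\rangle
\lesssim
\Bigl\langle\sum_{q\ge-1}\lambda_q^{-1+\delta_b+\frac6r}\|b_q\|_r^2\Bigr\rangle
\lesssim
\lambda_0^{\delta_b}\Bigl\langle\sum_{q\ge-1}\lambda_q^2\|b_q\|_2^2\Bigr\rangle
=\frac{\varepsilon_b}{\mu}.
\]
The times where $\Lambda_{b,r}=\infty$ form a null set. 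Indeed, $S(t)=\infty$ there, and the intermittency relation presupposes that $\langle S\rangle$ is finite, which holds for a Leray--Hopf weak solution since $\int\|\nabla b\|_2^2<\infty$. So the time integral above makes sense.

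Since $\delta_b-1\ge1$, Jensen's inequality gives
\[
\bigl\langle(\Lambda_{b,r}-\lambda_0)_+\bigr\rangle^{\delta_b-1}\le \bigl\langle(\Lambda_{b,r}-\lambda_0)_+^{\delta_b-1}\bigr\rangle .
\]
Combining this with the previous display, and treating $c_r$ as a fixed absolute constant absorbed into $\lesssim$, we get
\[
\bigl\langle(\Lambda_{b,r}-\lambda_0)_+\bigr\rangle\lesssim\Bigl(\frac{\varepsilon_b}{\mu^3}\Bigr)^{\frac1{\delta_b-1}}=\kappa_e^{\delta_b}.
\]
Finally, $\Lambda_{b,r}\le\lambda_0+(\Lambda_{b,r}-\lambda_0)_+$ pointwise. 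Averaging this inequality yields \eqref{eq:avg-emhd}.

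Most of the work sits in Lemma \ref{lem:emhd-average-pointwise}, which is already established. The step needing the most care is the exponent bookkeeping: the power of $\mu$ must come out exactly as $\mu^3$, from $\mu^2$ on the left and $\mu^{-1}$ hidden in $\varepsilon_b/\mu$. Note that the proportionality constant only picks up the power $c_r^{-2/(\delta_b-1)}$. The convexity requirement $\delta_b\ge2$ is precisely why the theorem strengthens the lemma's hypothesis. If $\delta_b$ were below $2$, Jensen would run the wrong way, and one would instead need a Chebyshev-type argument.
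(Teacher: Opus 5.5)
Your proof is correct and is the paper's argument: average the pointwise bound of Lemma \ref{lem:emhd-average-pointwise}, apply \eqref{eq:intermdef-emhd}, use $\delta_b-1>1$ for the Jensen step (the paper calls it monotonicity of $L^p$ norms), and conclude with $\Lambda_{b,r}\le\lambda_0+(\Lambda_{b,r}-\lambda_0)_+$. Your extra remark about the set where $\Lambda_{b,r}=\infty$ is not in the paper and is harmless; note, though, that finiteness of $\langle S\rangle$ comes from \eqref{eq:intermdef-emhd} together with finiteness of its right-hand side, not from the Leray--Hopf bound alone.
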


\begin{proof}
Since \(\delta_b-1>1\), the monotonicity of \(L^p\) norms gives
\[
\left\langle (\Lambda_{b,r}-\lambda_0)_+ \right\rangle
\le
\left\langle (\Lambda_{b,r}-\lambda_0)_+^{\delta_b-1} \right\rangle^{\frac{1}{\delta_b-1}}.
\]
Applying Lemma \ref{lem:emhd-average-pointwise} and then
\eqref{eq:intermdef-emhd}, we obtain
\begin{align*}
\left\langle (\Lambda_{b,r}-\lambda_0)_+ \right\rangle
&\lesssim
\left[
\frac{1}{\mu^2}
\left\langle
\sum_{q\ge -1}\lambda_q^{-1+\delta_b+\frac{6}{r}}\|b_q\|_r^2
\right\rangle
\right]^{\frac{1}{\delta_b-1}} \\
&\lesssim
\left[
\frac{\lambda_0^{\delta_b}}{\mu^2}
\left\langle
\sum_{q\ge -1}\lambda_q^2\|b_q\|_2^2
\right\rangle
\right]^{\frac{1}{\delta_b-1}} \\
&=
\left(\frac{\varepsilon_b}{\mu^3}\right)^{\frac{1}{\delta_b-1}}
=
\kappa_e^{\delta_b}.
\end{align*}
Therefore
\[
\langle \Lambda_{b,r}\rangle
\le
\lambda_0 + \left\langle (\Lambda_{b,r}-\lambda_0)_+ \right\rangle
\lesssim
\lambda_0 + \kappa_e^{\delta_b}.
\]
\end{proof}

\subsection{Hall-MHD}

For Hall-MHD we compare the velocity and magnetic determining wavenumbers with
the dissipation numbers suggested by the Navier-Stokes and EMHD scalings. For a
sufficiently regular Hall-MHD solution \((u,b)\), define
\begin{equation}\label{eq:kappa-hall}
\kappa_u^{\delta_u}
:=
\left(\frac{\varepsilon_u}{\nu^3}\right)^{\frac{1}{1+\delta_u}},
\qquad
\varepsilon_u
:=
\nu \lambda_0^{\delta_u}
\left\langle
\sum_{q\ge -1}\lambda_q^2\|u_q\|_2^2
\right\rangle,
\end{equation}
and
\begin{equation}\label{eq:kappa-hall-b}
\kappa_e^{\delta_b}
:=
\left(\frac{\varepsilon_b}{\mu^3}\right)^{\frac{1}{\delta_b-1}},
\qquad
\varepsilon_b
:=
\mu \lambda_0^{\delta_b}
\left\langle
\sum_{q\ge -1}\lambda_q^2\|b_q\|_2^2
\right\rangle.
\end{equation}
We assume the intermittency relations
\begin{equation}\label{eq:hall-int-u}
\left\langle
\sum_{q\ge -1}\lambda_q^{-1+\delta_u}\|u_q\|_\infty^2
\right\rangle
\lesssim
\lambda_0^{\delta_u}
\left\langle
\sum_{q\ge -1}\lambda_q^2\|u_q\|_2^2
\right\rangle
\end{equation}
and
\begin{equation}\label{eq:hall-int-b}
\left\langle
\sum_{q\ge -1}\lambda_q^{-1+\delta_b+\frac{6}{r}}\|b_q\|_r^2
\right\rangle
\lesssim
\lambda_0^{\delta_b}
\left\langle
\sum_{q\ge -1}\lambda_q^2\|b_q\|_2^2
\right\rangle.
\end{equation}
The second condition is the magnetic analogue of
\eqref{eq:intermdef-emhd}; the first is the standard intermittency relation
associated with the Navier-Stokes dissipation number.

\begin{Lemma}\label{lem:hall-u-pointwise}
Assume \(0<\sigma<1\) and \(2\sigma+1<\delta_u<3\). Let \(Q_u=Q_u(t)\) be such
that \(\Lambda_{u,r}(t)=\lambda_{Q_u}\). Then for every \(t\),
\begin{equation}\label{eq:hall-u-pointwise}
(c_r\nu)^2\bigl(\Lambda_{u,r}(t)-\lambda_0\bigr)_+^{1+\delta_u}
\lesssim
\sum_{q\ge -1}\lambda_q^{-1+\delta_u}\|u_q(t)\|_\infty^2.
\end{equation}
\end{Lemma}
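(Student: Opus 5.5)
We follow the proof of Lemma \ref{lem:emhd-average-pointwise}, with the velocity wavenumber in place of the magnetic one. The exponent \(1+\delta_u\) replaces \(\delta_b-1\) because the velocity conditions carry the weight \(\lambda_q^{-1}\) instead of \(\lambda_p^{3/r}\).

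\emph{Trivial and finite cases.} If \(\Lambda_{u,r}(t)=\lambda_0\), the left-hand side of \eqref{eq:hall-u-pointwise} vanishes and there is nothing to prove. Assume \(\lambda_0<\Lambda_{u,r}(t)=\lambda_Q<\infty\). By minimality, level \(Q-1\) is not admissible, so one of the two defining conditions of \(\Lambda_{u,r}\) fails there.

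\emph{High-frequency failure.} Suppose that for some \(p\ge Q\),
\[
(L\lambda_{p-Q+1})^\sigma\lambda_{Q-1}^{-1}\|u_p\|_\infty\ge c_r\nu .
\]
Then \((c_r\nu)^2\Lambda_{u,r}^2\lesssim (L\lambda_{p-Q})^{2\sigma}\|u_p\|_\infty^2\). Multiplying by \(\Lambda_{u,r}^{\delta_u-1}=\lambda_p^{\delta_u-1}(L\lambda_{p-Q})^{1-\delta_u}\) gives
\[
(c_r\nu)^2\Lambda_{u,r}^{1+\delta_u}\lesssim (L\lambda_{p-Q})^{2\sigma+1-\delta_u}\,\lambda_p^{-1+\delta_u}\|u_p\|_\infty^2 .
\]
The hypothesis \(\delta_u>2\sigma+1\) makes the first factor at most \(1\), so the right-hand side is bounded by a single term of the sum in \eqref{eq:hall-u-pointwise}.

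\emph{Low-frequency failure.} Suppose instead that \(\lambda_{Q-1}^{-2}\|\nabla u_{\le Q-1}\|_\infty\ge c_r\nu\). Then \(\|\nabla u_{\le Q-1}\|_\infty\gtrsim c_r\nu\Lambda_{u,r}^2\), and hence
\[
(c_r\nu)^2\Lambda_{u,r}^{1+\delta_u}\lesssim \Lambda_{u,r}^{\delta_u-3}\|\nabla u_{\le Q-1}\|_\infty^2 .
\]
Bernstein gives \(\|\nabla u_{\le Q-1}\|_\infty\lesssim\sum_{q<Q}\lambda_q\|u_q\|_\infty\). We write
\[
\lambda_q\|u_q\|_\infty=\lambda_q^{\frac{-1+\delta_u}{2}}\|u_q\|_\infty\,\lambda_q^{\frac{3-\delta_u}{2}},
\]
absorb \(\Lambda_{u,r}^{\frac{\delta_u-3}{2}}\) to produce the factor \((L\lambda_{Q-q})^{\frac{\delta_u-3}{2}}\), and apply Cauchy--Schwarz. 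The resulting geometric sum \(\sum_{q<Q}(L\lambda_{Q-q})^{\delta_u-3}\) is bounded uniformly in \(Q\) because \(\delta_u<3\).

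\emph{Infinite case.} This is the step needing the most care. If \(\Lambda_{u,r}(t)=\infty\), we show that \(S(t):=\sum_q\lambda_q^{-1+\delta_u}\|u_q\|_\infty^2=\infty\), by contradiction. Assume \(S(t)<\infty\). For \(p>q\),
\[
(L\lambda_{p-q})^\sigma\lambda_q^{-1}\|u_p\|_\infty\le \lambda_q^{-\frac{1+\delta_u}{2}}(L\lambda_{p-q})^{\sigma+\frac{1-\delta_u}{2}}S^{1/2}.
\]
The exponent \(\sigma+\frac{1-\delta_u}{2}\) is negative since \(\delta_u>2\sigma+1\), so the right-hand side tends to \(0\) as \(q\to\infty\). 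Similarly,
\[
\lambda_q^{-2}\|\nabla u_{\le q}\|_\infty\lesssim \lambda_q^{-\frac{1+\delta_u}{2}}\sum_{p\le q}(L\lambda_{q-p})^{\frac{\delta_u-3}{2}}\bigl(\lambda_p^{-1+\delta_u}\|u_p\|_\infty^2\bigr)^{1/2}\lesssim \lambda_q^{-\frac{1+\delta_u}{2}}S^{1/2}\to0 .
\]
Both defining conditions therefore hold for all large \(q\), contradicting \(\Lambda_{u,r}(t)=\infty\). Hence \(S(t)=\infty\), and \eqref{eq:hall-u-pointwise} holds with the convention \(\infty\le\infty\).

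The main point to check is the exponent bookkeeping that produces \(1+\delta_u\). The conditions \(\delta_u>2\sigma+1\) and \(\delta_u<3\) are exactly what make the two geometric factors decay. The assumption \(\sigma<1\) is used only to keep the range of \(\delta_u\) nonempty.
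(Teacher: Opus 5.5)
Your proposal is correct and follows the paper's proof essentially step for step. It uses the same split into trivial, finite and infinite cases, and the same non-admissibility of level $Q-1$: in the high-frequency case via the identity $\Lambda_{u,r}^{\delta_u-1}=\lambda_p^{\delta_u-1}(L\lambda_{p-Q})^{1-\delta_u}$ (valid up to a harmless constant power of $L$), in the low-frequency case via Bernstein plus Cauchy--Schwarz against $(L\lambda_{Q-q})^{\frac{\delta_u-3}{2}}$, and it ends with the same contradiction argument showing $S(t)=\infty$ whenever $\Lambda_{u,r}(t)=\infty$.
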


\begin{proof}
If \(\Lambda_{u,r}(t)=\lambda_0\), the claim is trivial. Assume first that
\(\lambda_0<\Lambda_{u,r}(t)<\infty\). Since \(q=Q_u-1\) is not admissible in
the definition of \(\Lambda_{u,r}\), one of the two defining conditions fails
there.

If the high-frequency condition fails, then for some \(p\ge Q_u\),
\[
(L\lambda_{p-Q_u+1})^\sigma \lambda_{Q_u-1}^{-1}\|u_p\|_\infty \ge c_r\nu.
\]
Since \(\lambda_{Q_u-1}=\Lambda_{u,r}/2\), we obtain
\[
(c_r\nu)^2 \Lambda_{u,r}^{1+\delta_u}
\lesssim
(L\lambda_{p-Q_u})^{2\sigma}\Lambda_{u,r}^{\delta_u-1}\|u_p\|_\infty^2.
\]
Using
\[
\Lambda_{u,r}^{\delta_u-1}
=
\lambda_p^{\delta_u-1}(L\lambda_{p-Q_u})^{1-\delta_u},
\]
we get
\[
(c_r\nu)^2 \Lambda_{u,r}^{1+\delta_u}
\lesssim
(L\lambda_{p-Q_u})^{2\sigma+1-\delta_u}
\lambda_p^{-1+\delta_u}\|u_p\|_\infty^2.
\]
Because \(\delta_u>2\sigma+1\), the exponent \(2\sigma+1-\delta_u\) is
negative, hence
\[
(c_r\nu)^2 \Lambda_{u,r}^{1+\delta_u}
\lesssim
\sum_{q\ge -1}\lambda_q^{-1+\delta_u}\|u_q\|_\infty^2.
\]

If the low-frequency condition fails, then
\[
\lambda_{Q_u-1}^{-2}\|\nabla u_{\le Q_u-1}\|_\infty \ge c_r\nu,
\]
so \(\|\nabla u_{\le Q_u-1}\|_\infty \gtrsim c_r\nu \Lambda_{u,r}^2\).
Therefore
\[
(c_r\nu)^2 \Lambda_{u,r}^{1+\delta_u}
\lesssim
\Lambda_{u,r}^{\delta_u-3}\|\nabla u_{\le Q_u-1}\|_\infty^2.
\]
By Bernstein and Cauchy-Schwarz,
\begin{align*}
\Lambda_{u,r}^{\delta_u-3}\|\nabla u_{\le Q_u-1}\|_\infty^2
&\lesssim
\Lambda_{u,r}^{\delta_u-3}
\Big(\sum_{q<Q_u}\lambda_q\|u_q\|_\infty\Big)^2 \\
&=
\Lambda_{u,r}^{\delta_u-3}
\Big(
\sum_{q<Q_u}
\lambda_q^{\frac{-1+\delta_u}{2}}\|u_q\|_\infty
(L\lambda_{Q_u-q})^{\frac{\delta_u-3}{2}}
\Big)^2 \\
&\lesssim
\Big(
\sum_{q<Q_u}\lambda_q^{-1+\delta_u}\|u_q\|_\infty^2
\Big)
\Big(
\sum_{q<Q_u}(L\lambda_{Q_u-q})^{\delta_u-3}
\Big).
\end{align*}
Since \(\delta_u<3\), the geometric sum is bounded independently of \(Q_u\).
This yields \eqref{eq:hall-u-pointwise} when \(\Lambda_{u,r}(t)<\infty\).

Finally, let
\[
S_u(t):=\sum_{q\ge -1}\lambda_q^{-1+\delta_u}\|u_q(t)\|_\infty^2.
\]
Assume \(S_u(t)<\infty\). Then for \(p>q\),
\begin{align*}
(L\lambda_{p-q})^\sigma \lambda_q^{-1}\|u_p\|_\infty
&=
\lambda_q^{-\frac{1+\delta_u}{2}}
(L\lambda_{p-q})^{\sigma+\frac{1-\delta_u}{2}}
\Big(
\lambda_p^{-1+\delta_u}\|u_p\|_\infty^2
\Big)^{1/2} \\
&\le
\lambda_q^{-\frac{1+\delta_u}{2}}
(L\lambda_{p-q})^{\sigma+\frac{1-\delta_u}{2}}
S_u(t)^{1/2}.
\end{align*}
Because \(\delta_u>2\sigma+1\), the exponent
\(\sigma+\frac{1-\delta_u}{2}\) is negative, so the right-hand side is bounded
by \(C\lambda_q^{-(1+\delta_u)/2}S_u(t)^{1/2}\), which tends to \(0\) as
\(q\to\infty\). Hence the high-frequency condition holds for all sufficiently
large \(q\).

Likewise,
\begin{align*}
\lambda_q^{-2}\|\nabla u_{\le q}\|_\infty
&\lesssim
\lambda_q^{-2}\sum_{p\le q}\lambda_p\|u_p\|_\infty \\
&=
\lambda_q^{-\frac{1+\delta_u}{2}}
\sum_{p\le q}
(L\lambda_{q-p})^{\frac{\delta_u-3}{2}}
\Big(
\lambda_p^{-1+\delta_u}\|u_p\|_\infty^2
\Big)^{1/2} \\
&\lesssim
\lambda_q^{-\frac{1+\delta_u}{2}} S_u(t)^{1/2},
\end{align*}
where we used \(\delta_u<3\). The right-hand side again tends to \(0\), so the
low-frequency condition also holds for all sufficiently large \(q\). This
contradicts \(\Lambda_{u,r}(t)=\infty\). Therefore \(S_u(t)=\infty\) whenever
\(\Lambda_{u,r}(t)=\infty\), and \eqref{eq:hall-u-pointwise} remains valid with
the convention \(\infty\le \infty\).
\end{proof}

\begin{Theorem}\label{thm:avg-hall}
Assume \(0<\sigma<1\), \(0<\delta<1\),
\[
2\sigma+1<\delta_u<3,
\qquad
\max\{2,2\delta+1\}<\delta_b<3.
\]
If the intermittency relations \eqref{eq:hall-int-u} and
\eqref{eq:hall-int-b} hold, then
\begin{equation}\label{eq:avg-hall}
\langle \Lambda_{u,r}\rangle \lesssim \lambda_0 + \kappa_u^{\delta_u},
\qquad
\langle \Lambda_{b,r}\rangle \lesssim \lambda_0 + \kappa_e^{\delta_b}.
\end{equation}
\end{Theorem}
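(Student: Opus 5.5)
The plan is to handle the two inequalities in \eqref{eq:avg-hall} separately, mirroring the proof of Theorem \ref{thm:avg-emhd}. The magnetic bound needs no new work. The definition of $\Lambda_{b,r}$ for Hall-MHD is literally the one in \eqref{wave} (the same high-frequency condition with weight $(L\lambda_{p-q})^\delta\lambda_p^{n/r}\|b_p\|_r$ and the same low-frequency condition $\lambda_q^{-1}\|\nabla b_{\le q}\|_\infty<c_r\mu$). Hence Lemma \ref{lem:emhd-average-pointwise} applies verbatim to the magnetic component $b$ of the Hall-MHD solution. Its proof used only these two defining conditions, Bernstein's inequality, and the constraints $0<\delta<1$, $2\delta+1<\delta_b<3$; it never used the EMHD equation itself.

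For the magnetic bound I would proceed as follows.
\begin{itemize}
\item Since $\delta_b-1>1$, Jensen (monotonicity of $L^p$ averages in time) gives $\langle(\Lambda_{b,r}-\lambda_0)_+\rangle\le\langle(\Lambda_{b,r}-\lambda_0)_+^{\delta_b-1}\rangle^{1/(\delta_b-1)}$.
\item The pointwise Lemma \ref{lem:emhd-average-pointwise} then bounds the right side by the time average of $\mu^{-2}\sum_q\lambda_q^{-1+\delta_b+6/r}\|b_q\|_r^2$.
\item Assumption \eqref{eq:hall-int-b} bounds that average by $\mu^{-3}\varepsilon_b$. Raising to the power $1/(\delta_b-1)$ gives exactly $\kappa_e^{\delta_b}$ as defined in \eqref{eq:kappa-hall-b}.
\item Finally, $\langle\Lambda_{b,r}\rangle\le\lambda_0+\langle(\Lambda_{b,r}-\lambda_0)_+\rangle$ yields the claim.
\end{itemize}

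For the velocity bound I would follow the same three steps, using Lemma \ref{lem:hall-u-pointwise} in place of Lemma \ref{lem:emhd-average-pointwise}. The exponent is now $1+\delta_u$, which exceeds $1$ automatically because $\delta_u>2\sigma+1>1$, so Jensen gives $\langle(\Lambda_{u,r}-\lambda_0)_+\rangle\le\langle(\Lambda_{u,r}-\lambda_0)_+^{1+\delta_u}\rangle^{1/(1+\delta_u)}$. Lemma \ref{lem:hall-u-pointwise} bounds the inner average by $\nu^{-2}\langle\sum_q\lambda_q^{-1+\delta_u}\|u_q\|_\infty^2\rangle$. Then \eqref{eq:hall-int-u} converts this into $\nu^{-2}\lambda_0^{\delta_u}\langle\sum_q\lambda_q^2\|u_q\|_2^2\rangle=\varepsilon_u/\nu^3$. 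Taking the $1/(1+\delta_u)$ power produces $\kappa_u^{\delta_u}$ from \eqref{eq:kappa-hall}, and adding $\lambda_0$ finishes the argument.

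The only point requiring care is the case where $\Lambda_{u,r}$ or $\Lambda_{b,r}$ is infinite on a set of times of positive measure. The lemmas then give infinite right-hand sides. This contradicts the finiteness of the intermittency averages, which are finite because the right-hand sides of \eqref{eq:hall-int-u}–\eqref{eq:hall-int-b} are bounded by the energy inequality for the (sufficiently regular) solution. So the wavenumbers are finite almost everywhere in $[t,t+T]$ and the Jensen step is legitimate.

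The other bookkeeping issue is to check that the constants hidden in $\lesssim$ depend only on $c_r$, $L$ and the exponents, not on $t$ or $T$. Beyond that, the main obstacle is already absorbed in the pointwise Lemmas \ref{lem:emhd-average-pointwise} and \ref{lem:hall-u-pointwise}, so I expect this proof to be short.
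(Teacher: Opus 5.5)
Your proposal is correct and follows essentially the same proof as the paper. For the velocity you use Jensen with exponent $1+\delta_u$, then Lemma~\ref{lem:hall-u-pointwise}, then \eqref{eq:hall-int-u}. For the magnetic field you note that the proof of Theorem~\ref{thm:avg-emhd} uses only the definition of $\Lambda_{b,r}$ and the intermittency relation, so it transfers with \eqref{eq:hall-int-b}. Your remark that the wavenumbers are finite for a.e.\ time (via finiteness of the dissipation averages) is a harmless refinement of the paper's ``$\infty\le\infty$'' convention.
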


\begin{proof}
Since \(1+\delta_u>1\), Lemma \ref{lem:hall-u-pointwise} gives
\begin{align*}
\left\langle (\Lambda_{u,r}-\lambda_0)_+ \right\rangle
&\le
\left\langle (\Lambda_{u,r}-\lambda_0)_+^{1+\delta_u} \right\rangle^{\frac{1}{1+\delta_u}} \\
&\lesssim
\left[
\frac{1}{\nu^2}
\left\langle
\sum_{q\ge -1}\lambda_q^{-1+\delta_u}\|u_q\|_\infty^2
\right\rangle
\right]^{\frac{1}{1+\delta_u}} \\
&\lesssim
\left[
\frac{\lambda_0^{\delta_u}}{\nu^2}
\left\langle
\sum_{q\ge -1}\lambda_q^2\|u_q\|_2^2
\right\rangle
\right]^{\frac{1}{1+\delta_u}} \\
&=
\left(\frac{\varepsilon_u}{\nu^3}\right)^{\frac{1}{1+\delta_u}}
=
\kappa_u^{\delta_u}.
\end{align*}
Hence \(\langle \Lambda_{u,r}\rangle \lesssim \lambda_0 + \kappa_u^{\delta_u}\).

For the magnetic field, the proof of Theorem \ref{thm:avg-emhd} uses only the
definition of \(\Lambda_{b,r}\) and the intermittency relation, not the EMHD
evolution itself. Applying the same argument to the Hall magnetic field \(b\)
and using \eqref{eq:hall-int-b} yields
\[
\langle \Lambda_{b,r}\rangle \lesssim \lambda_0 + \kappa_e^{\delta_b}.
\]
This proves \eqref{eq:avg-hall}.
\end{proof}

\section{Uniform Bounds from Besov Regularity}\label{sec-uniform-bound}

We conclude with a direct criterion guaranteeing that the determining
wavenumber stays uniformly bounded. The factor \((L\lambda_{p-q})^\delta\) in
\eqref{wave} forces \(\delta\) derivatives of high-frequency control, so the
natural assumption here is a Besov bound at the level
\(B^{\delta+n/r}_{r,\infty}\).

\begin{Theorem}\label{thm:uniform-bound-Lambda-b}
Let \(n\geq 2\), let \(1\leq r\leq \infty\), and let \(\delta>1\). Suppose that
\(b=b(x,t)\) is a strong solution of \eqref{emhd}, and let
\(\Lambda_{b,r}(t)\) be the magnetic determining wavenumber defined in
\eqref{wave}. Assume that
\begin{equation}\notag
\sup_{t\geq 0}\|b(t)\|_{B^{\delta+n/r}_{r,\infty}}<\infty.
\end{equation}
Then there exists a constant \(C=C(r,\delta,n,L,c_r)\) such that for every
\(t\geq 0\),
\begin{equation}\notag
\Lambda_{b,r}(t)
\leq
\frac{C}{\mu}\,\|b(t)\|_{B^{\delta+n/r}_{r,\infty}}.
\end{equation}
In particular, if
\begin{equation}\notag
M_\delta:=\sup_{t\geq 0}\|b(t)\|_{B^{\delta+n/r}_{r,\infty}}<\infty,
\end{equation}
then
\begin{equation}\notag
\sup_{t\geq 0}\Lambda_{b,r}(t)\leq \frac{C}{\mu}M_\delta<\infty.
\end{equation}
\end{Theorem}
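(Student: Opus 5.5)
The plan is to set $M(t):=\|b(t)\|_{B^{\delta+n/r}_{r,\infty}}$, so that $\lambda_p^{\delta+n/r}\|b_p(t)\|_r\le M(t)$ for every $p\ge -1$. I will then show that every $q\in\mathbb N$ with $\lambda_q\ge C_0M/\mu$ (for a suitable $C_0=C_0(r,\delta,n,L,c_r)$) satisfies both defining conditions in \eqref{wave}. Since $\Lambda_{b,r}$ is the minimal admissible $\lambda_q$, choosing the smallest dyadic $\lambda_q\ge C_0M/\mu$ loses at most a factor $2$ and gives $\Lambda_{b,r}(t)\le 2C_0M(t)/\mu$. Taking the supremum in $t$ then gives the uniform statement at once. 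The strong-solution hypothesis is used only so that $b(t)$ is well defined pointwise in time with the Besov norm finite. No evolution estimate is needed, because the argument is purely a property of the definition \eqref{wave}.

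\emph{Low-frequency condition.} By Bernstein (Lemma \ref{brn}) I bound
\[
\lambda_q^{-1}\|\nabla b_{\le q}\|_\infty\lesssim \lambda_q^{-1}\sum_{p\le q}\lambda_p^{1+n/r}\|b_p\|_r\le \lambda_q^{-1}M\sum_{p\le q}\lambda_p^{1-\delta}.
\]
Because $\delta>1$, the geometric sum is bounded by a constant $C(\delta)$, uniformly in $q$; this is exactly where $\delta>1$ enters. Hence the condition $\lambda_q^{-1}\|\nabla b_{\le q}\|_\infty<c_r\mu$ holds as soon as $\lambda_q> C(\delta,n,r)M/(c_r\mu)$. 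This step produces the linear dependence on $M/\mu$ claimed in the theorem.

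\emph{High-frequency condition.} For $p>q$ I write
\[
(L\lambda_{p-q})^\delta\lambda_p^{n/r}\|b_p\|_r=L^\delta\lambda_q^{-\delta}\,\lambda_p^{\delta+n/r}\|b_p\|_r\le L^\delta\lambda_q^{-\delta}M,
\]
which is uniform in $p$. So this condition holds once $\lambda_q^\delta>L^\delta M/(c_r\mu)$, i.e.\ $\lambda_q>L\,(M/(c_r\mu))^{1/\delta}$.

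The step I expect to be the main obstacle is reconciling the two thresholds. The high-frequency threshold scales like $(M/\mu)^{1/\delta}$ rather than linearly. When $M/(c_r\mu)\ge1$, the inequality $\delta>1$ gives $(M/(c_r\mu))^{1/\delta}\le M/(c_r\mu)$, so the linear threshold dominates and the constant absorbs $L$. When $M/(c_r\mu)<1$, both conditions already hold at the smallest index $\lambda_0$, so $\Lambda_{b,r}=\lambda_0$. In that regime the bound must be read with the trivial floor $\lambda_0$, i.e.\ as $\Lambda_{b,r}\le\max\{\lambda_0,CM/\mu\}$; I would state this explicitly (for instance, for $b\equiv0$ one still has $\Lambda_{b,r}=\lambda_0$, so the floor cannot be dropped). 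This convention is consistent with the lower bound $\lambda_0$ used throughout Section 5.
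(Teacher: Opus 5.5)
Your proposal is correct and is essentially the paper's argument. Both check the two conditions in \eqref{wave} for every $\lambda_q\gtrsim M/\mu$, using $\lambda_p^{\delta+n/r}\|b_p\|_r\le M$, Bernstein's inequality, and the convergence of $\sum_p\lambda_p^{1-\delta}$ for $\delta>1$.

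The only real difference is how the sublinear high-frequency threshold is handled. The paper uses $\lambda_q^{-\delta}\le\lambda_0^{1-\delta}\lambda_q^{-1}$, which holds because $\lambda_q\ge\lambda_0$ and $\delta>1$. This makes that threshold linear in $M/\mu$ at once and avoids your case split. Your split at $M/(c_r\mu)=1$ is also slightly too crude: admissibility of $\lambda_0$ requires $M/(c_r\mu)$ to lie below a constant depending on $L,\delta,n,r$. This changes only constants.

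Your floor $\Lambda_{b,r}\le\max\{\lambda_0,CM/\mu\}$ is a genuine correction rather than a convention. The paper's Step 3 tacitly assumes $CM/\mu\ge\lambda_0$ and absorbs the dyadic factor $2$ into $C$. The bound as stated fails for $b\equiv0$, and whenever $\|b(t)\|_{B^{\delta+n/r}_{r,\infty}}$ is small.
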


\begin{proof}
Fix \(t\geq 0\), and write
\[
M_\delta(t):=\|b(t)\|_{B^{\delta+n/r}_{r,\infty}}
=
\sup_{p\geq -1}\lambda_p^{\delta+n/r}\|b_p(t)\|_{L^r}.
\]
We show that every dyadic frequency \(\lambda_q\) satisfying
\[
\lambda_q\geq \frac{C}{\mu}M_\delta(t)
\]
for a sufficiently large constant \(C\) belongs to the admissible set in
\eqref{wave}. This implies the desired estimate by minimality.

\medskip

\noindent
\emph{Step 1: verification of the high-frequency condition.}
Let \(p>q\). By the definition of \(M_\delta(t)\),
\[
\|b_p(t)\|_{L^r}\leq M_\delta(t)\lambda_p^{-\delta-n/r}.
\]
Therefore
\[
(L\lambda_{p-q})^\delta \lambda_p^{n/r}\|b_p(t)\|_{L^r}
\leq
M_\delta(t)(L\lambda_{p-q})^\delta \lambda_p^{-\delta}.
\]
Since \(\lambda_p=\lambda_q(L\lambda_{p-q})\), the right-hand side equals
\[
M_\delta(t)\lambda_q^{-\delta}.
\]
Because \(\delta>1\) and \(\lambda_q\geq \lambda_0\), we have
\[
\lambda_q^{-\delta}\leq \lambda_0^{1-\delta}\lambda_q^{-1}.
\]
Hence, after increasing \(C\) if needed,
\[
(L\lambda_{p-q})^\delta \lambda_p^{n/r}\|b_p(t)\|_{L^r}<c_r\mu
\qquad \forall p>q
\]
whenever
\[
\lambda_q\geq \frac{C}{\mu}M_\delta(t).
\]

\medskip

\noindent
\emph{Step 2: verification of the low-frequency condition.}
Using Bernstein's inequality term by term,
\[
\|\nabla b_{\le q}(t)\|_{L^\infty}
\leq
\sum_{p\le q}\|\nabla b_p(t)\|_{L^\infty}
\lesssim
\sum_{p\le q}\lambda_p^{1+n/r}\|b_p(t)\|_{L^r}.
\]
By the definition of \(M_\delta(t)\),
\[
\lambda_p^{1+n/r}\|b_p(t)\|_{L^r}
\leq
M_\delta(t)\lambda_p^{1-\delta}.
\]
Since \(\delta>1\), the geometric series
\[
\sum_{p\geq -1}\lambda_p^{1-\delta}
\]
is finite. Consequently,
\[
\|\nabla b_{\le q}(t)\|_{L^\infty}
\lesssim
M_\delta(t),
\]
and therefore
\[
\lambda_q^{-1}\|\nabla b_{\le q}(t)\|_{L^\infty}
\lesssim
M_\delta(t)\lambda_q^{-1}.
\]
After enlarging \(C\) if necessary, this is less than \(c_r\mu\) whenever
\[
\lambda_q\geq \frac{C}{\mu}M_\delta(t).
\]

\medskip

\noindent
\emph{Step 3: conclusion.}
We have shown that every \(q\) with
\[
\lambda_q\geq \frac{C}{\mu}M_\delta(t)
\]
belongs to the admissible set defining \(\Lambda_{b,r}(t)\). Hence
\[
\Lambda_{b,r}(t)
\leq
\frac{C}{\mu}M_\delta(t)
=
\frac{C}{\mu}\|b(t)\|_{B^{\delta+n/r}_{r,\infty}}.
\]
Taking the supremum over \(t\geq 0\) proves the final claim.
\end{proof}

%\section{Preliminaries}

%\bigskip

%\section*{Acknowledgement}
%The author is grateful for ...

%\Endrefs
\end{document}